\newtheorem{assumption}{Assumption}
\newtheorem{remark}{Remark}[section]
\newcommand{\beqn}{\begin{equation}}
\newcommand{\eeqn}{\end{equation}}
\newcommand{\fa}{{\mathfrak a}}
\def\R{\mathbb{R}}
\def\N{\mathbb{N}}
\newcommand{\tru}{\operatorname{tr}}
\newcommand{\Gc}{\mathscr{G}}
\newcommand{\calD}{\mathcal{D}}
\newcommand{\cK}{\mathcal{K}}
\newcommand{\cS}{{U}}
\newcommand{\cX}{\mathcal{X}}
\newcommand{\bcX}{\mathcal{X}}
\newcommand{\cY}{\mathcal{Y}}
\newcommand{\bcY}{\mathcal{Y}}
\newcommand{\satop}[2]{\stackrel{\scriptstyle{#1}}{\scriptstyle{#2}}}
\newcommand{\bsalpha}{{\boldsymbol{\alpha}}}
\newcommand{\bsbeta}{\boldsymbol{\beta}}
\newcommand{\bsgamma}{{\boldsymbol{\gamma}}}
\newcommand{\bssigma}{\boldsymbol{\sigma}}
\newcommand{\bstau}{\boldsymbol{\tau}}
\newcommand{\bsnu}{{\boldsymbol{\nu}}}
\newcommand{\bsell}{{\boldsymbol{\ell}}}
\newcommand{\bsq}{{\boldsymbol{q}}}
\newcommand{\bsk}{{\boldsymbol{k}}}
\newcommand{\bsl}{\boldsymbol{l}}
\newcommand{\bsx}{{\boldsymbol{x}}}
\newcommand{\bsy}{{\boldsymbol{y}}}
\newcommand{\bsE}{\boldsymbol{E}}
\newcommand{\bsU}{\boldsymbol{U}}
\newcommand{\bsV}{\boldsymbol{V}}
\newcommand{\bsW}{\boldsymbol{W}}
\newcommand{\bsX}{\boldsymbol{X}}
\newcommand{\bsY}{\boldsymbol{Y}}
\newcommand{\half}{{\textstyle\frac{1}{2}}}
\newcommand{\bhalf}{{\textstyle\boldsymbol{\frac{1}{2}}}}
\newcommand{\rd}{\mathrm{d}}
\newcommand{\bbR}{\mathbb{R}}
\newcommand{\bbZ}{\mathbb{Z}}
\newcommand{\bbN}{\mathbb{N}}
\newcommand{\calO}{\mathcal{O}}
\newcommand{\calW}{\mathcal{W}}
\newcommand{\cL}{\mathcal{L}}
\newcommand{\setu}{\mathrm{\mathfrak{u}}}
\newcommand{\setv}{\mathrm{\mathfrak{v}}}
\newcommand{\KL}{Karhunen-Lo\`eve }
\newcommand{\setD}{{\mathfrak{D}}}
\newcommand{\mask}[1]{{}}
\newcommand{\bszero}{{\boldsymbol{0}}}
\newcommand{\bsOmega}{{\boldsymbol{\Omega}}}
\newcommand{\bsone}{{\boldsymbol{1}}}
\newcommand{\wal}{\operatorname{wal}}
\newcommand{\Z}{\mathbb{Z}}
\newcommand{\calS}{\mathcal S}
\newcommand{\be}{\begin{equation}}
\newcommand{\ee}{\end{equation}}
\newcommand{\ba}{\begin{array}}
\newcommand{\ea}{\end{array}}
\newcommand{\beas}{\begin{eqnarray*}}
\newcommand{\eeas}{\end{eqnarray*}}
\newcommand{\bea}{\begin{eqnarray}}
\newcommand{\eea}{\end{eqnarray}}
\title{Higher order QMC Petrov-Galerkin discretization for affine parametric operator equations
with random field inputs\footnote{\today}}
\author{Josef Dick\footnotemark[2]
 \and Frances Y. Kuo\footnotemark[2]
 \and Quoc T. Le Gia\footnotemark[2]
 \and Dirk Nuyens\footnotemark[3]
 \and Christoph Schwab\footnotemark[4]}
\begin{document}

\maketitle

\renewcommand{\thefootnote}{\fnsymbol{footnote}}

 \footnotetext[2]{School of Mathematics and Statistics,
                  University of New South Wales, Sydney NSW 2052, Australia
                  ({\tt josef.dick@unsw.edu.au}, {\tt f.kuo@unsw.edu.au}, {\tt
                  qlegia@unsw.edu.au}). The work of these authors was
                  supported by Australian Research Council Discovery
                  Projects. The first author's work was additionally
                  supported by an Australian Research Council QEII
                  Fellowship.
 }
 \footnotetext[3]{Department of Computer Science, KU Leuven, Belgium
                  ({\tt dirk.nuyens@cs.kuleuven.be}).
                  This author is a fellow of the Research Foundation Flanders (FWO).
 }
 \footnotetext[4]{Seminar for Applied Mathematics, ETH Z\"urich, ETH Zentrum,
                  HG G57.1, CH8092 Z\"urich, Switzerland ({\tt christoph.schwab@sam.math.ethz.ch}).
                  This author's work was supported by the European Research Council under grant AdG247277.}

\renewcommand{\thefootnote}{\arabic{footnote}}

\begin{abstract}
We construct quasi-Monte Carlo methods to approximate the expected values
of linear functionals of Petrov-Galerkin discretizations of
parametric operator equations which depend on a possibly infinite sequence
of parameters. Such problems arise in the numerical solution of
differential and integral equations with random field inputs. We analyze
the regularity of the solutions with respect to the parameters in terms of
the rate of decay of the fluctuations of the input field. If $p\in (0,1]$
denotes the ``summability exponent'' corresponding to the fluctuations in
affine-parametric families of operators, then we prove that deterministic
``interlaced polynomial lattice rules'' of order $\alpha = \lfloor 1/p
\rfloor+1$ in $s$ dimensions with $N$ points can be constructed using a
fast component-by-component algorithm, in $\calO(\alpha\,s\, N\log N +
\alpha^2\,s^2 N)$ operations, to achieve a convergence rate of
$\calO(N^{-1/p})$, with the implied constant independent of $s$. This
dimension-independent convergence rate is superior to the rate
$\calO(N^{-1/p+1/2})$ for $2/3\leq p\leq 1$, which was recently
established for randomly shifted lattice rules under comparable
assumptions. In our analysis we use a non-standard Banach space setting
and introduce ``smoothness-driven product and order dependent (SPOD)''
weights for which we develop a new fast CBC construction.
\end{abstract}

\begin{keywords}
Quasi Monte-Carlo methods, interlaced polynomial lattice rules, higher
order digital nets, parametric operator equations, infinite dimensional
quadrature, Petrov-Galerkin discretization
\end{keywords}

\begin{AMS}
65D30, 65D32, 65N30
\end{AMS}

\pagestyle{myheadings} \thispagestyle{plain}

\markboth{J.~DICK, F.~Y. KUO, Q.~T.~LE GIA, D.~NUYENS AND
CH.~SCHWAB}{HIGHER ORDER QMC PETROV-GALERKIN DISCRETIZATION}
\section{Introduction}
The efficient numerical computation of statistical quantities for
solutions of partial differential and of integral equations with random
inputs is a key task in uncertainty quantification in engineering and in
the sciences. The quantity of interest is expressed as a mathematical
expectation, and the efficient computation of these quantities involves
two basic steps: i) the approximate (numerical) solution of the operator
equation, and ii) the approximate evaluation of the mathematical
expectation by numerical integration. In the present paper, we outline a
strategy towards these two aims which is based on i)
\emph{Petrov-Galerkin discretization} of the operator equation and on
ii) \emph{Quasi-Monte Carlo} (QMC) integration.

The present paper is motivated in part by \cite{KSS12}, where QMC
integration using a family of \emph{randomly shifted lattice rules} was
combined with Finite Element discretization for a model parametric
diffusion equation, and in part by \cite{ScMCQMC12}, where the methodology
was extended to an abstract family of parametric operator equations. In
this paper, we follow the methodology of \cite{KSS12} in the abstract
setting of \cite{ScMCQMC12}, but in contrast to \cite{KSS12,ScMCQMC12},
we use \emph{deterministic, ``interlaced polynomial lattice rules''},
which provide a convergence rate beyond order one for smooth integrands;
whereas order one was the limitation in \cite{KSS12,ScMCQMC12}.

Contrary to Monte Carlo methods which require uniformly distributed
samples of random input functions, QMC (and other) quadrature methods
require the introduction of coordinates of integration prior to numerical
quadrature. In the context of random field inputs with non-degenerate
covariance operators, a \emph{countable} number of coordinates is required
to describe the random input data, e.g., by a \KL expansion. Therefore, in
the present work, we consider in particular that the operator equation
contains not only a finite number of random input parameters, but rather
depends on \emph{random field inputs}, i.e., it contains random functions
of space and, in evolution problems,
of time which describe uncertainty in the problem under consideration.

More precisely, let $\bsy := (y_j)_{j\ge 1}$ denote the possibly
countable set of parameters from a domain $U \subseteq \R^\N$, and let
$A(\bsy)$ denote a $\bsy$-parametric bounded linear operator between
suitably defined spaces $\bcX$ and $\bcY'$. Then we wish to solve the
following parametric operator equation: given $f\in \bcY'$, for every
$\bsy\in U$ find $u(\bsy)\in \bcX$ such that
\begin{equation}\label{eq:main}
  A(\bsy)\, u (\bsy) = f \;.
\end{equation}
Such parametric operator equations arise from partial differential
equations with random field input, see, e.g.,
\cite{SchwabGittelsonActNum11}.
We assume in this paper the simplest
case, namely, that $A(\bsy)$ has ``\emph{affine}''
\emph{parameter dependence}, i.e.,
there exists a sequence
$\{ A_j\}_{j\geq 0} \subset \cL(\bcX,\bcY')$ such that for every $\bsy \in U$ we can write
\begin{equation}\label{eq:Baffine}
  A(\bsy) = A_0 + \sum_{j\ge 1} y_j\, A_j \;.
\end{equation}
A concrete example is the diffusion problem considered, e.g., in
\cite{KSS12}, in which the diffusion in random media is modeled by equation
\eqref{eq:main} with $A(\bsy) = -\nabla \cdot (a(\bsy)\nabla)$, and where
the diffusion coefficients are expanded in terms of a \KL expansion
$a(\bsy) = \bar{a} + \sum_{j \ge 1} y_j\, \psi_j$, leading to $A_0 =
-\nabla \cdot ( \bar{a} \nabla)$ and $A_j = -\nabla \cdot (\psi_j\nabla)$.

Here, as in \cite{KSS12}, we restrict ourselves to the
(infinite-dimensional) parameter domain
\[
  U \,=\, [-\half,\half]^\bbN\;.
\]
Some assumptions on the ``\emph{nominal}'' (or ``\emph{mean field}'')
operator $A_0$ and the ``\emph{fluctuation}'' operators $A_j$ are required
to ensure that the sum in \eqref{eq:Baffine} converges, and to ensure
existence and uniqueness of the solution $u(\bsy)$ in \eqref{eq:main} for
all $\bsy\in U$; these will be given in \S\ref{sec:pre}. In addition, we
shall consider the parametric Petrov-Galerkin approximation
$u^h(\bsy) \in \cX_h\subset \cX$, to be defined in \eqref{eq:parmOpEqh}
below, as well as $u^h_s(\bsy)$, corresponding to the Petrov-Galerkin
approximation of the problem with the sum in \eqref{eq:Baffine} truncated
to $s$ terms (this is equivalent to setting $y_j = 0$ for $j>s$). Further
assumptions on $A_0$ and $A_j$ are required for our regularity and
approximation results; these will all be given in \S\ref{sec:pre}. For now
we mention only one key assumption, namely, that there exists $p\in (0,1]$
for which
\begin{equation} \label{eq:psumpsi0}
  \sum_{j\ge 1} \| A_j\|_{\cL(\bcX,\bcY')}^p \,<\, \infty\;,
\end{equation}
where $\|\cdot\|_{\cL(\bcX,\bcY')}$ denotes the operator norm for the set
of all bounded linear mappings from $\bcX$ to $\bcY'$. This assumption
implies a decay of the fluctuation coefficients $A_j$, with stronger decay
as the value of $p\in (0,1]$ decreases.

For a given bounded linear functional $G(\cdot): \cX \to \bbR$, we are
interested in computing expected values of $G(u(\bsy))$ with respect to
$\bsy\in U$, i.e., an integral of the functional $G(\cdot)$ of the
parametric solution,
\begin{equation} \label{eq:int}
 I(G(u)) \,:=\, \int_U G(u(\bsy)) \,\rd\bsy\;,
\end{equation}
over the infinite dimensional domain of integration $U$.
We truncate the infinite sum in \eqref{eq:Baffine} to $s$ terms and solve
the corresponding operator equation \eqref{eq:main} using
Petrov-Galerkin discretization from a dense, one-parameter family
$\{\cX^h\}$ of subspaces of $\cX$. Denoting this dimension-truncated
Petrov-Galerkin solution by $u^h_s$, we then approximate the
corresponding $s$-dimensional integral using QMC quadrature,
\begin{equation} \label{eq:qmcG}
  \frac{1}{N} \sum_{n=0}^{N-1} G\big(u^h_s\big(\bsy_n - \bhalf\big)\big)\;,
\end{equation}
where $\bsy_0,\ldots,\bsy_{N-1}\in [0,1]^s$ denote $N$ points from a
properly chosen QMC rule, and the shift of coordinates by $\bhalf$ takes
care of the translation from $[0,1]^s$ to $[-\half,\half]^s$. Note that
each evaluation of the integrand at a single QMC point $\bsy_n$ requires
the approximate (Petrov-Galerkin) solution of one operator equation
for $u^h_s(\bsy_n)$.

There are three sources of error in approximating \eqref{eq:int} by
\eqref{eq:qmcG}: a \emph{Galerkin discretization error} depending on~$h$,
a \emph{dimension truncation error} depending on~$s$, and a \emph{QMC
quadrature error} depending on~$N$. The main focus of this paper will be on the analysis of the QMC error: we
prove that \emph{interlaced polynomial lattice rules} \cite{Go13,DiGo12}
can be constructed using a
component-by-component (CBC) algorithm to achieve a rate of convergence of
\[
  \calO(N^{-1/p})\;,
\]
with $p \in (0,1]$ as in \eqref{eq:psumpsi0}, and with the implied
constant independent of $N$, $h$, and $s$, but dependent on $p$. In fact,
the constant grows exponentially in $1/p^2$, thus the constant is large
for small values of $p$.

The function space setting for QMC integration considered in this paper
uses a Banach space norm with two parameters $1\le q\le\infty$ and $1\le
r\le\infty$, corresponding to the $L_q$ norm of functions and an $\ell_r$
norm of vectors combining these $L_q$ norms. Often $q$ and $r$ are taken
to be the same value in the literature, with $q=r=2$ giving the Hilbert
space setting. However, as discussed in \cite{KSS11}, decoupling $q$ and
$r$ allows more flexibility in the analysis, since the two parameters play
different roles. (The $L_q$ norm increases with increasing $q$, while the
$\ell_r$ norm increases with decreasing $r$.)
The results in \cite{KSS12,ScMCQMC12} are based on the Hilbert space
setting, with a convergence rate of
$\calO(N^{-\min(1/p-1/2,1-\delta)})$, for any $\delta>0$, which is capped at
order one. The main result of this paper is based on $r=\infty$ and it
holds \emph{for all values of $q$}. The convergence rate of
$\calO(N^{-1/p})$ obtained in this paper is an improvement by a factor of
$N^{-1/2}$ for $2/3 < p \le 1$ and by a factor of $N^{-1/p+1}$ for $0 < p
< 2/3$. The former improvement is due to our switch to a non-Hilbert
space setting. The latter improvement is due to the use of higher order
QMC rules.

Put differently, as discussed in \cite[p.\ 3368]{KSS12}, to achieve
nearly order one convergence rate the randomly shifted lattice rules
considered in \cite{KSS12,ScMCQMC12} require $p\le 2/3$; other lattice
rules require $p\le 1/2$; Niederreiter and Sobol$'$ sequences require
$p\le 1/3$. On the other hand, the interlaced polynomial lattice rules
considered in this paper give order one convergence rate already when
$p=1$.

In comparison, under the same assumption \eqref{eq:psumpsi0}, the paper
\cite{CDS1} establishes $p$-summability of generalized (Legendre)
polynomial chaos expansions of the integrand $G(u(\cdot))$ in
\eqref{eq:int}, and shows that $N$-term approximation of the integrand has
a convergence rate in $L_2$ norm of $\calO(N^{-1/p + 1/2)})$, with the
implied constant independent of the dimension of the integration domain.
This rate could be realized, for example, by adaptive Galerkin
projections. It also suggests an $N$-term approximation rate in the
(natural for integration) $L_1$ norm of $\calO(N^{-1/p})$.

Our QMC quadrature approach requires the use of interlaced polynomial
lattice rules of order $\alpha = \lfloor 1/p\rfloor + 1$, which is at
least $2$. (Thus we cannot prove our results using classical QMC
rules, as for instance described in \cite{Nie92}, since those are of
order~$1$.) Similar to the analysis in \cite{KSS12,ScMCQMC12}, we need to
choose ``\emph{weights}'' for the function space setting to ensure that
the implied constant for the convergence rate is bounded independently of
the truncation dimension $s$. The regularity analysis reveals the need to
use weights that are \emph{not} of ``POD'' form (namely, ``product and
order dependent'' form) as in \cite{KSS12}, but of a more general form
which we call ``SPOD weights", for ``smoothness-driven product and order
dependent'' weights,
see \eqref{eq_defSPOD} ahead. For these SPOD weights, we develop a new \emph{fast component-by-component construction} of interlaced polynomial lattice rules, with cost of $\calO(\alpha\,s\, N\log N + \alpha^2\,s^2 N)$ operations.

The outline of this paper is as follows. In \S\ref{sec:pre} we present a
class of parametric operator equations, review the parametric and spatial
regularities of their solutions, give a synopsis of the
Petrov-Galerkin discretization of these equations, and outline some
estimates relating to dimension truncation. In \S\ref{Sc:HiOrdQMC} we
derive a worst case error bound for digital nets in a novel weighted
Banach space setting, prove that interlaced polynomial lattice rules can
be constructed by a CBC algorithm to achieve a dimension-independent error
bound with a good convergence rate, and explain how to implement the
algorithm in an efficient way. Finally in \S\ref{sec:comb-err} we summarize the combined QMC
Petrov-Galerkin error bound.
\section{Problem formulation}
\label{sec:pre}
Generalizing results of \cite{CDS1}, we study well-posedness, regularity
and polynomial approximation of solutions for a family of abstract
parametric saddle point problems, with operators depending on a sequence of parameters. The results cover a wide range of operator equations: among them are (stationary and
time-dependent) diffusion in random media \cite{CDS1},
wave propagation \cite{HoSc12Multi},
parametric, nonlinear PDEs \cite{CCS2}
and optimal control problems for uncertain systems \cite{KunothCS2011}.
%
\subsection{Parametric operator equations}
\label{ssec:affparops}
We denote by $\bcX$ and $\bcY$ two separable and reflexive Banach spaces
over $\mathbb{R}$ (all results will hold with the obvious modifications
also for spaces over $\mathbb{C}$) with (topological) duals $\bcX'$ and
$\bcY'$, respectively. By $\cL(\bcX,\bcY')$, we denote the set of bounded
linear operators $A:\bcX \to\bcY'$.

As we explained in the introduction, let $\bsy := (y_j)_{j \geq 1} \in
U = [-\half,\half]^\bbN$ be a countable set of parameters.
For every $f\in \bcY'$ and every $\bsy\in U$,
we wish to solve the parametric operator
equation \eqref{eq:main}, where the operator $A(\bsy)\in\cL(\bcX,\bcY')$
is of affine parameter dependence, see \eqref{eq:Baffine}.
In order for
the sum in \eqref{eq:Baffine} to converge, we impose the following
assumptions on the sequence $\{A_j\}_{j\geq 0}\subset \cL(\cX,\cY')$.
In
doing so, we associate with the operator $A_j$ the bilinear forms
$\fa_j(\cdot,\cdot):\bcX\times \bcY \rightarrow \mathbb{R}$ via
$$
  \forall v\in \cX,\;w\in \cY:\quad
  \fa_j(v,w) \,=\, {_{\cY}}\langle w, A_j v \rangle_{\cY'}\;,
  \quad j=0,1,2,\ldots
  \;.
$$
\begin{assumption}\label{ass:AssBj}
The sequence $\{ A_j \}_{j\geq 0}$ in \eqref{eq:Baffine} satisfies
the following conditions:
\begin{enumerate}
\item%
The \emph{nominal operator} $A_0\in \cL(\bcX,\bcY')$ is boundedly invertible,
i.e., there exists $\mu_0 > 0$ such that (cf. the inf-sup conditions
in \cite{BF})
\begin{equation}\label{eq:B0infsup} 
 \inf_{0\ne v \in \bcX} \sup_{0\ne w \in \bcY}
 \frac{\fa_0(v,w)}{\| v \|_{\bcX} \|w\|_{\bcY}}
 \ge \mu_0\;,\quad
 \inf_{0\ne w \in \bcY} \sup_{0\ne v \in \bcX}
 \frac{\fa_0(v,w)}{\| v \|_{\bcX} \|w\|_{\bcY}}
 \ge \mu_0
 \;.
\end{equation}
\item%
The \emph{fluctuation operators} $\{ A_j \}_{j\geq 1}$ are small with respect
to $A_0$ in the following sense: there exists a constant $0 < \kappa <
2$ such that 
\begin{equation} \label{eq:Bjsmall} 
 \sum_{j\geq 1} \beta_j \leq \kappa < 2\;,
 \quad\mbox{where}\quad
 \beta_j \,:=\, \| A_0^{-1} A_j \|_{\cL(\cX,\cY')}\;,
 \quad j=1,2,\ldots
 \;.
\end{equation}
\end{enumerate}
\end{assumption}
Assumption~\ref{ass:AssBj} is sufficient for the bounded invertibility of
$A(\bsy)$, uniformly with respect to the parameter sequence $\bsy\in U$.
(This corresponds to the assumption of the uniform bound on the random
coefficient of the elliptic PDE considered in \cite{KSS12}.)
\begin{theorem}\label{thm:BsigmaInv}
Under Assumption~\ref{ass:AssBj}, for every realization $\bsy\in \cS$ of
the parameter vector, the affine parametric operator $A(\bsy)$ given by
\eqref{eq:Baffine} is boundedly invertible. Specifically, for the bilinear
form $\fa(\bsy;\cdot,\cdot): \bcX\times\bcY\to\R$ associated with
$A(\bsy)\in \cL(\bcX,\bcY')$ via
\begin{equation}\label{eq:parmbil}
 \fa(\bsy;v,w) \,:=\, {_{\cY}}\langle w, A(\bsy) v\rangle_{\cY'}\;,
\end{equation}
there hold the uniform (with respect to $\bsy\in \cS$) inf-sup conditions
with $\mu = (1 - \kappa/2)\mu_0$,
\begin{equation}\label{eq:Bsiinfsup}
 \forall \bsy \in \cS:
 \quad
 \inf_{0\ne v \in \bcX} \sup_{0\ne w \in \bcY}
 \frac{\fa(\bsy;v,w)}{\| v \|_{\bcX} \|w\|_{\bcY}}
 \geq \mu
 \;,\quad
 \inf_{0\ne w \in \bcY} \sup_{0\ne v \in \bcX}
 \frac{\fa(\bsy;v,w)}{\| v \|_{\bcX} \|w\|_{\bcY}}
 \geq \mu
 \;.
\end{equation}
In particular, for every $f \in \bcY'$ and for every $\bsy \in \cS$,
the parametric operator equation
\begin{equation} \label{eq:parmOpEq}
 \mbox{find} \quad u(\bsy) \in \bcX:\quad
 \fa(\bsy;u(\bsy), w) \,=\,  {_{\bcY}}\langle w , f \rangle_{\bcY'}
 \quad
 \forall w \in \bcY
\end{equation}
admits a unique solution $u(\bsy)$ which satisfies the a-priori estimate
\begin{equation}\label{eq:apriori}
 \| u(\bsy) \|_{\bcX}
 \,\leq\,
 \frac{1}{\mu}\, \| f \|_{\bcY'}
\;.
\end{equation}
\end{theorem}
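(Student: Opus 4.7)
The plan is to derive the two inf-sup conditions by combining the inf-sup conditions on $A_0$ with a Neumann-series perturbation argument, exploiting that $|y_j|\leq 1/2$ on $U=[-\half,\half]^\N$ together with $\sum_{j\geq 1}\beta_j\leq \kappa<2$.

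First, I would invoke the standard Banach--Necas--Babu\v{s}ka theorem: since $\bcX$ and $\bcY$ are reflexive, the inf-sup pair \eqref{eq:B0infsup} is equivalent to the bounded invertibility of $A_0\in\cL(\bcX,\bcY')$ with $\|A_0^{-1}\|_{\cL(\bcY',\bcX)}\leq 1/\mu_0$. Then I would write $A(\bsy)=A_0(I+T(\bsy))$ on $\bcX$, where
\[
  T(\bsy) \,:=\, A_0^{-1}\sum_{j\geq 1} y_j\,A_j \,\in\, \cL(\bcX,\bcX).
\]
The second step is to show that this series converges absolutely in operator norm and that $\|T(\bsy)\|_{\cL(\bcX,\bcX)}\leq \kappa/2<1$ uniformly in $\bsy\in U$: indeed, termwise $\|y_j A_0^{-1}A_j\|_{\cL(\bcX,\bcX)}\leq \tfrac{1}{2}\beta_j$, and summation via Assumption~\ref{ass:AssBj}(ii) gives the stated bound. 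A Neumann series argument then yields that $I+T(\bsy)$ is invertible on $\bcX$ with $\|(I+T(\bsy))^{-1}\|_{\cL(\bcX,\bcX)}\leq (1-\kappa/2)^{-1}$, and therefore $A(\bsy)\in\cL(\bcX,\bcY')$ is boundedly invertible with
\[
  \|A(\bsy)^{-1}\|_{\cL(\bcY',\bcX)} \,\leq\, \frac{1}{\mu_0(1-\kappa/2)} \,=\, \frac{1}{\mu}
\]
uniformly in $\bsy\in U$.

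The third step is to translate this bounded invertibility back into the two inf-sup conditions \eqref{eq:Bsiinfsup}. The first inf-sup is equivalent, via duality of the norm on $\bcY'$, to the stability estimate $\|A(\bsy)v\|_{\bcY'}\geq \mu\|v\|_{\bcX}$, which follows immediately from the bound on $\|A(\bsy)^{-1}\|$. For the second inf-sup one applies the same argument to the adjoint: $A(\bsy)^*=A_0^*(I+T(\bsy)^*)$ in $\cL(\bcY,\bcX')$, where reflexivity of $\bcX,\bcY$ together with the already-established inf-sup from $A_0$ furnishes the analogous inf-sup for $A_0^*$; the same Neumann series estimate then gives the symmetric inf-sup with the same constant $\mu$. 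Finally, existence, uniqueness, and the a-priori estimate \eqref{eq:apriori} for \eqref{eq:parmOpEq} follow directly from the bound on $\|A(\bsy)^{-1}\|_{\cL(\bcY',\bcX)}$.

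I do not foresee a substantial obstacle: the only point that needs slight care is verifying that the operator-norm series defining $T(\bsy)$ converges unconditionally in $\cL(\bcX,\bcX)$, which is guaranteed by the absolute summability $\sum_j \beta_j\leq \kappa$ in Assumption~\ref{ass:AssBj}(ii), so that partial-sum truncations converge in operator norm and justify the Neumann expansion uniformly for all $\bsy\in U$.
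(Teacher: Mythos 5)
Your argument is correct and complete: the factorization $A(\bsy)=A_0(I+T(\bsy))$ with $\|T(\bsy)\|_{\cL(\bcX,\bcX)}\le\tfrac12\sum_{j\ge1}\beta_j\le\kappa/2<1$, the Neumann series, and the translation of $\|A(\bsy)^{-1}\|_{\cL(\bcY',\bcX)}\le 1/\mu$ back into the two inf-sup conditions (via the adjoint for the second one) is exactly the standard perturbation proof. The paper itself gives no proof but defers to \cite[Theorem 2]{ScMCQMC12}, and your argument is essentially the one given there, yielding the same constant $\mu=(1-\kappa/2)\mu_0$.
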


For a proof of the theorem, we refer to \cite[Theorem 2]{ScMCQMC12}.
\subsection{Parametric regularity of solutions}
\label{ssec:anadepsol}
In this subsection we study the dependence of the solution $u(\bsy)$ of
the parametric, variational problem \eqref{eq:parmOpEq} on the parameter
vector $\bsy$. 
In the following, let $\N_0^\N$ denote the set of sequences $\bsnu =
(\nu_j)_{j\geq 1}$ of nonnegative integers $\nu_j$, and let $|\bsnu| :=
\sum_{j\geq 1} \nu_j$. For $|\bsnu|<\infty$, we denote the partial
derivative of order $\bsnu$ of $u(\bsy)$ with respect to $\bsy$ by $\partial^\bsnu_\bsy u \,:=\,
(\partial^{|\bsnu|}u)/(\partial^{\nu_1}_{y_1}\partial^{\nu_2}_{y_2}\cdots)$.

\begin{theorem}\label{thm:Dsibound} \cite{CDS1,KunothCS2011}
Under Assumption~\ref{ass:AssBj}, there exists a constant $C_0 > 0$ such
that for every $f\in \bcY'$ and for every $\bsy\in \cS$, the partial
derivatives of the parametric solution $u(\bsy)$ of the parametric
operator equation \eqref{eq:main} with affine operator \eqref{eq:Baffine}
satisfy the bounds
\begin{equation} \label{eq:Dsibound}
\|(\partial^\bsnu_\bsy u)(\bsy)\|_\bcX
\,\le\,
C_0\, |\bsnu|! \,\bsbeta^\bsnu \| f\|_{\bcY'}
\quad \mbox{for all } \bsnu \in \N_0^\N \mbox{ with } |\bsnu|<\infty
\;,
\end{equation}
where $0! :=1$, $\bsbeta^\bsnu := \prod_{j\ge 1}
\beta_j^{\nu_j}$, with $\beta_j$ as in \eqref{eq:Bjsmall},
and $|\bsnu| = \sum_{j \ge 1} \nu_j$.
\end{theorem}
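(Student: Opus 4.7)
The plan is to differentiate the weak form \eqref{eq:parmOpEq} with respect to the components of $\bsy$, exploit the affinity of $A(\bsy)$ in \eqref{eq:Baffine} to obtain a linear recursion for $\partial^\bsnu_\bsy u(\bsy)$, and close it by induction on $|\bsnu|$ using the uniform inf-sup estimate \eqref{eq:Bsiinfsup}.

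First I would exploit affinity. Because $\fa(\bsy;v,w) = \fa_0(v,w) + \sum_{j\ge 1} y_j\,\fa_j(v,w)$ is linear in each $y_j$, one has $\partial^{\bse_j}_\bsy \fa(\bsy;v,w) = \fa_j(v,w)$ and every mixed partial of $\fa$ of total order $\ge 2$ vanishes. Applying $\partial^\bsnu_\bsy$ to the $\bsy$-independent identity $\fa(\bsy;u(\bsy),w) = {_{\bcY}}\langle w,f\rangle_{\bcY'}$ via the Leibniz rule, for any $\bsnu\ne\bszero$ only two kinds of terms survive, giving
\[
 \fa\bigl(\bsy;\partial^\bsnu_\bsy u(\bsy),w\bigr) \,=\, -\sum_{j:\,\nu_j\ge 1} \nu_j\,\fa_j\bigl(\partial^{\bsnu-\bse_j}_\bsy u(\bsy),w\bigr) \quad \forall w\in\bcY \;.
\]
(Here I used $\binom{\bsnu}{\bsnu-\bse_j}=\nu_j$.) Thus $\partial^\bsnu_\bsy u(\bsy)$ solves a parametric saddle point problem with the same operator $A(\bsy)$ and a right-hand side built from strictly lower-order partials.

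To pass to norms while producing the coefficients $\beta_j = \|A_0^{-1}A_j\|$ rather than $\|A_j\|$, I would next rewrite the identity as $A_0\,\partial^\bsnu_\bsy u = -\sum_j y_j\,A_j\,\partial^\bsnu_\bsy u - \sum_{j:\,\nu_j\ge 1} \nu_j\,A_j\,\partial^{\bsnu-\bse_j}_\bsy u$, apply $A_0^{-1}$, and move the first sum to the left. The resulting operator $I + \sum_j y_j A_0^{-1}A_j$ is invertible by a Neumann series, since $\sum_j |y_j|\beta_j \le \tfrac{1}{2}\sum_j\beta_j \le \kappa/2 < 1$, with norm at most $1/(1-\kappa/2)$. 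This yields the scalar recursion
\[
 \|\partial^\bsnu_\bsy u(\bsy)\|_\bcX \,\le\, \frac{1}{1-\kappa/2}\sum_{j:\,\nu_j\ge 1} \nu_j\,\beta_j\,\|\partial^{\bsnu-\bse_j}_\bsy u(\bsy)\|_\bcX \;.
\]
The induction on $|\bsnu|$ starts from the a-priori estimate \eqref{eq:apriori} for $\bsnu=\bszero$, and the inductive step invokes the combinatorial identity $\sum_{j:\,\nu_j\ge 1} \nu_j\,\beta_j\,\bsbeta^{\bsnu-\bse_j} = |\bsnu|\,\bsbeta^\bsnu$, which together with $|\bsnu|\cdot(|\bsnu|-1)! = |\bsnu|!$ produces the target factor $|\bsnu|!\,\bsbeta^\bsnu$.

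The main obstacle will be the compounding prefactor $1/(1-\kappa/2) > 1$ that appears at every inductive step: a naive closure would contribute an extra $(1-\kappa/2)^{-|\bsnu|}$ and preclude a $\bsnu$-independent constant $C_0$. The standard remedy, as in \cite{CDS1,KunothCS2011}, is to run the induction with $\bsbeta$ replaced by a slightly inflated but still summable sequence $\tilde\bsbeta$, using the slack provided by $\kappa<2$ to absorb the geometric series into a single $C_0$; the stated bound then follows after relabeling. The remaining ingredients (bilinearity, Leibniz rule, Neumann series, and the uniform inf-sup of Theorem~\ref{thm:BsigmaInv}) are routine.
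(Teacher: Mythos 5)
Your skeleton---differentiate the variational identity, use affinity to kill every Leibniz term of order $\ge 2$ in the bilinear form, obtain the first-order recursion, and induct on $|\bsnu|$---is exactly the standard argument behind this result; the paper gives no proof of its own and simply cites \cite{CDS1,KunothCS2011}, so there is nothing more specific to compare against. The differentiated identity, the combinatorial step $\sum_{j:\nu_j\ge1}\nu_j\beta_j\bsbeta^{\bsnu-\bse_j}=|\bsnu|\,\bsbeta^{\bsnu}$, and the Neumann-series bound $\|(I+\sum_j y_jA_0^{-1}A_j)^{-1}\|\le(1-\kappa/2)^{-1}$ are all correct.

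The final paragraph, however, contains a real imprecision. The compounding factor $(1-\kappa/2)^{-|\bsnu|}$ cannot be ``absorbed into a single $C_0$'': it is genuinely present. Take $\cX=\cY=\bbR$, $A_0=1$, $A_1=\beta_1\in(0,2)$, so $\kappa=\beta_1$; then $u(y)=f/(1+y\beta_1)$ and $|u^{(n)}(-\tfrac12)|=n!\,\bigl(\beta_1/(1-\kappa/2)\bigr)^{n}\,|f|/(1-\kappa/2)$, so no $\bsnu$-independent $C_0$ makes \eqref{eq:Dsibound} hold with the literal $\beta_1$ of \eqref{eq:Bjsmall}. What your induction actually proves is \eqref{eq:Dsibound} with $\beta_j$ replaced by $\tilde\beta_j:=\beta_j/(1-\kappa/2)$ and $C_0=1/\mu$: the geometric loss must be absorbed into the \emph{sequence}, one factor per unit of $|\bsnu|$, not into the constant. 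This is harmless for everything downstream in the paper, since the QMC analysis only uses $p$-summability (and, for $p=1$, smallness) of the sequence, both of which survive multiplication by a $j$-independent constant---but you should present it as a redefinition of $\bsbeta$ rather than claim the stated inequality follows verbatim ``after relabeling.'' An alternative that closes the induction with per-step constant exactly $1$ is to skip $A_0^{-1}$ altogether and apply the uniform inf-sup estimate \eqref{eq:Bsiinfsup} directly to the differentiated identity, giving $\|\partial^{\bsnu}_{\bsy}u(\bsy)\|_{\cX}\le\mu^{-1}\sum_{j:\nu_j\ge1}\nu_j\,\|A_j\|_{\cL(\cX,\cY')}\,\|\partial^{\bsnu-\bse_j}_{\bsy}u(\bsy)\|_{\cX}$ and hence \eqref{eq:Dsibound} with the sequence $\|A_j\|_{\cL(\cX,\cY')}/\mu$ and $C_0=1/\mu$---again a sequence equivalent to $\bsbeta$ only up to a $j$-independent factor.
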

\subsection{Spatial regularity of solutions}
\label{sec:SpatReg}
We assume given {\em scales of smoothness spaces}
$\{ \cX_t \}_{t \geq 0}$, $\{ \cY_t \}_{t\geq 0}$,
with
\begin{equation}\label{eq:SmScal}
\begin{aligned}
 \cX &= \cX_0 \supset \cX_1 \supset \cX_2 \supset \cdots\;,
 &\cY &= \cY_0 \supset \cY_1 \supset \cY_2 \supset \cdots\;,
 \quad\mbox{and}
 \\
 \cX' &= \cX'_0 \supset \cX'_1 \supset \cX'_2 \supset \cdots\;,
 &\cY' &= \cY'_0 \supset \cY'_1 \supset \cY'_2 \supset \cdots
 \;.
\end{aligned}
\end{equation}
The scales are assumed to be defined also for noninteger values of the smoothness
parameter $t\geq 0$ by interpolation. For self-adjoint operators, usually
$\cX_t  = \cY_t$. For example, in diffusion problems in {\em convex
domains} $D$ considered in \cite{CDS1,KSS12}, the smoothness scales
\eqref{eq:SmScal} are $\cX = \cY = H^1_0(D)$, $\cX_1 = \cY_1 = (H^2\cap
H^1_0)(D)$, $\cY' = H^{-1}(D)$, $\cY'_1 = L^2(D)$. In a nonconvex polygon
(or polyhedron), analogous smoothness scales are available, but involve
Sobolev spaces with weights\footnote{Not to be confused with the weighted
Sobolev spaces in QMC error analysis, see, e.g., \cite{SW98,KSS11}.}.

In \cite{NS12}, this kind of abstract regularity result was established
for a wide range of second order parametric, elliptic systems in 2D and
3D, also for higher order regularity. Importantly, the smoothness scales
are then weighted Sobolev spaces $\cK^{t+1}_{a+1}(D)$ of Kondratiev type
in $D$, and hence $\cX_t = \cK^{t+1}_{a+1}(D)$, $\cY'_t =
\cK^{t-1}_{a-1}(D)$ in this case. The Finite Element spaces which realize
the maximal convergence rates (beyond order one) are regular, simplicial
families in the sense of Ciarlet, on suitably refined meshes which
compensate for the corner and edge singularities.

In the ensuing convergence analysis of Petrov-Galerkin
discretizations of \eqref{eq:main}, we will assume that the data
regularity $f\in\cY'_t$ for some $t>0$ implies that
\begin{equation}\label{eq:Regul}
\forall\bsy\in \cS: \quad u(\bsy) = A(\bsy)^{-1} f \in \cX_t \;.
\end{equation}
Parametric regularity is available for numerous parametric differential
equations (see \cite{SchwabGittelsonActNum11,HaSc11,HoaSc12Wave,CCS2,KunothCS2011}
and the references there) as well as for posterior densities in Bayesian inverse
problems \cite{SS12,SS13}.
\subsection{Petrov-Galerkin discretization}
\label{ssec:GalDisc}

Let $\{ \cX^h \}_{h>0}\subset\cX$ and $\{ \cY^h \}_{h>0}\subset\cY$ be two
families of finite dimensional subspaces which are dense in $\cX$ and in
$\cY$, respectively. We will also assume the {\em approximation
properties}: for $0<t \leq \bar{t}$ and $0 < t' \leq \bar{t'}$, and for
$0< h \leq h_0$, there hold
\begin{equation} \label{eq:apprprop}
\begin{aligned}
\forall v \in \cX_t\;
&:
\quad
\inf_{v^h\in \cX^h} \| v - v^h \|_{\cX}
\,\leq\,
C_t\, h^t\, \| v \|_{\cX_t} \;,
\\
\forall w \in \cY_{t'}\;
&:
\quad
\inf_{w^h\in \cY^h} \| w - w^h \|_{\cY}
\,\leq\,
C_{t'}\, h^{t'}\, \| w \|_{\cY_{t'}}
\;.
\end{aligned}
\end{equation}
The maximum amount of smoothness in the scale $\cX_t$, denoted by
$\bar{t}$, depends on the problem class under consideration and on the
Sobolev scale: e.g., for elliptic problems in polygonal domains, it is
well known that choosing for $\cX_t$ the usual Sobolev spaces will allow
\eqref{eq:Regul} with $t$ only in a possibly small interval
$0< t \leq \bar{t}$, whereas choosing $\cX_t$ as Sobolev spaces with weights
will allow rather large values of $\bar{t}$ (see, e.g., \cite{NS12}).
Corresponding to \eqref{eq:Regul}, we shall assume that
\begin{equation} \label{eq:assW1infty} 
  \forall\,0\leq t \leq \bar{t} :\quad
  \sup_{\bsy\in \cS} \| A(\bsy)^{-1}  \|_{\cL(\cY'_t, \cX_t)}  < \infty\;.
\end{equation}
\begin{theorem}\label{prop:stab}
Assuming that the subspace sequences $\{ \cX^h \}_{h>0}\subset\cX$
and $\{ \cY^h \}_{h>0}\subset\cY$ are stable, i.e., there exist $\bar{\mu}
> 0$ and $h_0 > 0$ such that for every $0<h \leq h_0$, there hold the
uniform (with respect to $\bsy\in \cS$) discrete inf-sup conditions
\begin{align}\label{eq:Bhinfsup1}
&\forall \bsy \in \cS:
\quad
\inf_{0\ne v^h \in \bcX^h} \sup_{0\ne w^h \in \bcY^h}
\frac{\fa(\bsy;v^h,w^h)}{\| v^h \|_{\bcX} \|w^h\|_{\bcY}}
\geq \bar{\mu} > 0\;,
\\
\label{eq:Bhinfsup2}
&\forall \bsy\in \cS:\quad
\inf_{0\ne w^h \in \bcY^h} \sup_{0\ne v^h \in \bcX^h}
\frac{\fa(\bsy;v^h,w^h)}{\| v^h \|_{\bcX} \|w^h\|_{\bcY}}
\geq \bar{\mu}>0
\;.
\end{align}
Then, for every $0<h \leq h_0$ and for every $\bsy \in \cS$, the
Petrov-Galerkin approximations $u^h(\bsy)\in\cX^h$, given by
\begin{equation} \label{eq:parmOpEqh}
\mbox{find} \; u^h(\bsy) \in \bcX^h :
\quad
\fa(\bsy;u^h(\bsy),w^h) =
{_{\cY}}\langle w^h, f \rangle_{\cY'}
\quad
\forall w^h\in \bcY^h\;,
\end{equation}
admits a unique solution $u^h(\bsy)$ which satisfies the a-priori estimate
\begin{equation}\label{eq:FEstab}
 \| u^h(\bsy) \|_{\bcX} \,\le\, \frac{1}{\bar{\mu}}\, \| f \|_{\bcY'}
\;.
\end{equation}
Moreover, there exists a constant $C>0$ such that for all $\bsy\in \cS$ quasioptimality holds,
\begin{equation} \label{eq:quasiopt}
 \| u(\bsy) - u^h(\bsy) \|_{\cX}
 \,\le\, \frac{C}{\bar{\mu}} \inf_{0\ne v^h\in \cX^h} \| u(\bsy) - v^h\|_{\cX}
\;.
\end{equation}
\end{theorem}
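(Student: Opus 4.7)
All three conclusions would follow from a standard discrete Banach--Nečas--Babuška (BNB) argument applied to $\fa(\bsy;\cdot,\cdot)$ on $\bcX^h\times\bcY^h$, together with a Céa-type comparison. Since $\bcX^h$ and $\bcY^h$ are finite-dimensional, the first discrete inf-sup bound \eqref{eq:Bhinfsup1} implies injectivity of the linear operator $A^h(\bsy)\colon\bcX^h\to(\bcY^h)'$, $v^h\mapsto\fa(\bsy;v^h,\cdot)$, while \eqref{eq:Bhinfsup2} yields injectivity of its transpose. A dimension count then delivers $\dim\bcX^h=\dim\bcY^h$ and bijectivity of $A^h(\bsy)$, so that \eqref{eq:parmOpEqh} admits a unique solution $u^h(\bsy)\in\bcX^h$ for every $f\in\bcY'$ and every $\bsy\in\cS$.

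To establish the stability bound \eqref{eq:FEstab}, I would test \eqref{eq:Bhinfsup1} with $v^h:=u^h(\bsy)$, substitute \eqref{eq:parmOpEqh}, and estimate the resulting dual pairing:
\begin{equation*}
\bar{\mu}\,\|u^h(\bsy)\|_{\bcX}\;\le\;\sup_{0\ne w^h\in\bcY^h}\frac{\fa(\bsy;u^h(\bsy),w^h)}{\|w^h\|_{\bcY}}\;=\;\sup_{0\ne w^h\in\bcY^h}\frac{{_{\bcY}}\langle w^h,f\rangle_{\bcY'}}{\|w^h\|_{\bcY}}\;\le\;\|f\|_{\bcY'}.
\end{equation*}

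For the quasi-optimality \eqref{eq:quasiopt} I would use the classical Babuška argument. Fix $v^h\in\bcX^h$ arbitrary. Restricting the test function in \eqref{eq:parmOpEq} to $w=w^h\in\bcY^h$ and subtracting \eqref{eq:parmOpEqh} produces the Galerkin orthogonality $\fa(\bsy;u(\bsy)-u^h(\bsy),w^h)=0$ for all $w^h\in\bcY^h$. Applying \eqref{eq:Bhinfsup1} to $u^h(\bsy)-v^h\in\bcX^h$ and feeding in this orthogonality gives
\begin{equation*}
\bar{\mu}\,\|u^h(\bsy)-v^h\|_{\bcX}\;\le\;\sup_{0\ne w^h\in\bcY^h}\frac{\fa(\bsy;u(\bsy)-v^h,w^h)}{\|w^h\|_{\bcY}}\;\le\;C_A\,\|u(\bsy)-v^h\|_{\bcX},
\end{equation*}
where $C_A:=\sup_{\bsy\in\cS}\|A(\bsy)\|_{\cL(\bcX,\bcY')}$. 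A triangle inequality followed by the infimum over $v^h\in\bcX^h$ then yields \eqref{eq:quasiopt} with $C=\bar{\mu}+C_A$.

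The only point that genuinely requires verification is $C_A<\infty$, and this is precisely where Assumption~\ref{ass:AssBj} enters: from $\|A_j\|_{\cL(\bcX,\bcY')}\le\|A_0\|_{\cL(\bcX,\bcY')}\,\beta_j$ and $|y_j|\le\tfrac12$, I would estimate
\begin{equation*}
\|A(\bsy)\|_{\cL(\bcX,\bcY')}\;\le\;\|A_0\|_{\cL(\bcX,\bcY')}\Bigl(1+\tfrac12\sum_{j\ge1}\beta_j\Bigr)\;\le\;\bigl(1+\tfrac{\kappa}{2}\bigr)\|A_0\|_{\cL(\bcX,\bcY')},
\end{equation*}
uniformly in $\bsy\in\cS$, so that $C_A$ depends only on $A_0$ and $\kappa$. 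Everything else is a textbook application of discrete BNB plus Galerkin orthogonality; the $\bsy$-uniformity of the final constants is handed over for free by the $\bsy$-independence of $\bar{\mu}$ built into \eqref{eq:Bhinfsup1}--\eqref{eq:Bhinfsup2}, so I do not expect a real obstacle at any step.
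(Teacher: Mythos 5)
Your proposal is correct: the paper gives no proof of this theorem at all (it treats it as the classical discrete Banach--Ne\v{c}as--Babu\v{s}ka well-posedness result together with Babu\v{s}ka's quasi-optimality argument), and your argument---dimension count from the two discrete inf-sup conditions, testing the inf-sup bound with $v^h=u^h(\bsy)$ for stability, Galerkin orthogonality plus the uniform bound $\|A(\bsy)\|_{\cL(\bcX,\bcY')}\le(1+\kappa/2)\|A_0\|_{\cL(\bcX,\bcY')}$ for quasi-optimality---is exactly the standard proof being invoked. All steps check out, including the $\bsy$-uniformity of the constants.
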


We remark that under Assumption \ref{ass:AssBj}, the validity of the
discrete inf-sup conditions for the nominal bilinear form
$\fa_0(\cdot,\cdot)$, see~\eqref{eq:B0infsup},
with constant
$\bar{\mu}_0>0$ independent of $h$, implies \eqref{eq:Bhinfsup1} and
\eqref{eq:Bhinfsup2} for the bilinear form $\fa(\bsy;\cdot,\cdot)$
with $\bar{\mu} = (1-\kappa/2) \bar{\mu}_0>0$.

\begin{theorem}\label{thm:FEconvrate}
Under Assumption~\ref{ass:AssBj} and condition \eqref{eq:assW1infty}, for
every $f\in \cY'$ and for every $\bsy\in \cS$, the approximations
$u^h(\bsy)$ are stable, i.e., \eqref{eq:FEstab} holds. For every $f\in
\cY'_t$ with $0<t\le \bar{t}$, there exists a constant $C>0$ such that as
$h\rightarrow 0$ there holds
\begin{equation} \label{eq:FEconvu}
  \| u(\bsy) - u^h(\bsy) \|_\cX \,\le\, C\, h^t\, \| f \|_{\cY'_t }
  \;.
\end{equation}
\end{theorem}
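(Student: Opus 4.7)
The plan is to assemble the bound from three ingredients already prepared earlier in the excerpt: the inf-sup based stability estimate, the abstract quasioptimality of Petrov-Galerkin approximations, and the approximation property of the trial spaces, with the uniform regularity bound \eqref{eq:assW1infty} used to convert norms of the exact solution into norms of the data.

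First, I would dispatch the stability claim. Assumption~\ref{ass:AssBj} together with the discrete inf-sup hypothesis on the nominal form $\fa_0$ yields the uniform discrete inf-sup conditions \eqref{eq:Bhinfsup1}--\eqref{eq:Bhinfsup2} for $\fa(\bsy;\cdot,\cdot)$ with $\bar{\mu}=(1-\kappa/2)\bar{\mu}_0$, as pointed out in the remark following Theorem~\ref{prop:stab}. Applying Theorem~\ref{prop:stab} then gives the existence, uniqueness, and a-priori estimate \eqref{eq:FEstab}, uniformly in $\bsy\in\cS$.

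Next, to establish \eqref{eq:FEconvu}, I would chain the three estimates. By quasioptimality \eqref{eq:quasiopt},
\begin{equation*}
 \| u(\bsy) - u^h(\bsy) \|_\cX
 \,\le\, \frac{C}{\bar{\mu}} \inf_{v^h\in\cX^h} \| u(\bsy) - v^h \|_\cX .
\end{equation*}
Since $f\in\cY'_t$ with $0<t\le\bar{t}$, the uniform regularity \eqref{eq:assW1infty} implies $u(\bsy)=A(\bsy)^{-1}f\in\cX_t$ with
\begin{equation*}
 \| u(\bsy) \|_{\cX_t}
 \,\le\, \sup_{\bsy\in\cS}\|A(\bsy)^{-1}\|_{\cL(\cY'_t,\cX_t)} \, \| f \|_{\cY'_t}.
\end{equation*}
Inserting this into the approximation property \eqref{eq:apprprop} for $\cX^h$ yields
\begin{equation*}
 \inf_{v^h\in\cX^h} \| u(\bsy) - v^h \|_\cX
 \,\le\, C_t\, h^t\, \| u(\bsy)\|_{\cX_t}
 \,\le\, C_t\, h^t\, \sup_{\bsy\in\cS}\|A(\bsy)^{-1}\|_{\cL(\cY'_t,\cX_t)}\, \| f \|_{\cY'_t}.
\end{equation*}
Combining the two displays and absorbing all $\bsy$-independent constants into a single $C>0$ gives \eqref{eq:FEconvu}.

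There is no serious obstacle here; the proof is essentially bookkeeping, since every ingredient has already been prepared. The one subtlety I would be careful about is the uniformity of the constant $C$ in $\bsy\in\cS$: the quasioptimality constant in \eqref{eq:quasiopt} depends on $\bsy$ only through continuity and the discrete inf-sup constant $\bar{\mu}$, both of which are controlled uniformly via Assumption~\ref{ass:AssBj}, while the $\bsy$-dependence of $\|u(\bsy)\|_{\cX_t}$ is absorbed by taking the supremum in \eqref{eq:assW1infty}. It is precisely this uniformity that makes the estimate useful in the later QMC error analysis, where the bound will be integrated over $\bsy\in U$.
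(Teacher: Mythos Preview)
Your proof is correct and is precisely the intended argument: the paper does not spell out a proof of this theorem, treating it as an immediate consequence of the quasioptimality \eqref{eq:quasiopt}, the approximation property \eqref{eq:apprprop}, and the uniform regularity \eqref{eq:assW1infty}, which is exactly the chain you wrote down. There is nothing to add or correct.
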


Since we are interested in the expectations of functionals of the
parametric solution, see \eqref{eq:int},
we will also impose a regularity
assumption on the functional $G(\cdot)\in \cX'$:
\begin{equation}\label{eq:regG}
\exists\; 0< t'\leq \bar{t}: \quad
G(\cdot) \in \cX'_{t'}
\;,
\end{equation}
and the {\em adjoint regularity}:
for $t'$ as in \eqref{eq:regG} there exists $C_{t'}>0$
such that for every $\bsy\in \cS$,
\begin{equation}\label{eq:refAd}
w(\bsy) = (A^*(\bsy))^{-1} G \in \cY_{t'} \;,
\quad
 \| w(\bsy) \|_{\cY_{t'}} \,\le\, C_{t'} \, \| G \|_{\cX'_{t'}} \;.
\end{equation}
Moreover, we see from \eqref{eq:qmcG} that
the discretization error
of $G(u(\bsy))$ is of interest as well. It is known that
$|G(u(\bsy))-G(u^h(\bsy))|$
may converge faster than $\|u(\bsy)-u^h(\bsy)\|_\cX$.
\begin{theorem} \label{thm:FEGconv}
Under Assumption~\ref{ass:AssBj} and the conditions \eqref{eq:assW1infty}
and \eqref{eq:refAd}, for every $f\in \cY'_{t}$ with $0<t\leq \bar{t}$,
for every $G(\cdot)\in \cX'_{t'}$ with $0<t'\leq \bar{t}$ and for every
$\bsy\in \cS$, as $h\to 0$, there exists a constant $C>0$ independent of
$h>0$ and of $\bsy\in U$ such that the Petrov-Galerkin approximations
$G(u^h(\bsy))$ satisfy
\begin{align} \label{eq:Gconvest}
  \left| G(u(\bsy)) - G(u^h(\bsy)) \right|
  &\,\le\, C\, h^{{\tau}} \, \| f \|_{\cY'_{t}} \, \| G \|_{\cX'_{t'}}
\;.
\end{align}
where $0<\tau := t+t'$.
\end{theorem}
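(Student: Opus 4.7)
The plan is to carry out a standard Aubin--Nitsche duality argument, adapted to the Petrov--Galerkin (non-symmetric, two-scale) setting and made uniform in $\bsy\in\cS$ by invoking the $\bsy$-independent bounds already established.

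First, I would introduce the adjoint (dual) problem: for fixed $\bsy\in\cS$, let $w(\bsy)\in\bcY$ solve
\[
 \fa(\bsy;v,w(\bsy)) \,=\, G(v) \quad \forall\, v \in \bcX,
\]
so that $w(\bsy) = (A^*(\bsy))^{-1}G$. Hypothesis \eqref{eq:refAd} gives $w(\bsy)\in\cY_{t'}$ together with the uniform bound $\|w(\bsy)\|_{\cY_{t'}}\le C_{t'}\|G\|_{\cX'_{t'}}$. The error $e^h(\bsy):=u(\bsy)-u^h(\bsy)$ satisfies the Galerkin orthogonality
\[
 \fa(\bsy;e^h(\bsy),w^h) \,=\, 0 \qquad \forall\, w^h \in \bcY^h,
\]
which follows directly from subtracting \eqref{eq:parmOpEqh} from \eqref{eq:parmOpEq}.

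Next, using linearity of $G$ and the definition of $w(\bsy)$, I would write, for an arbitrary $w^h\in\bcY^h$,
\[
 G(u(\bsy))-G(u^h(\bsy)) \,=\, G(e^h(\bsy)) \,=\, \fa(\bsy;e^h(\bsy),w(\bsy))
 \,=\, \fa(\bsy;e^h(\bsy),w(\bsy)-w^h).
\]
Under Assumption \ref{ass:AssBj}, the parametric bilinear form is uniformly bounded on $\bcX\times\bcY$ (since $\sum_j\beta_j\le\kappa<2$ yields $\|A(\bsy)\|_{\cL(\bcX,\bcY')}\le \|A_0\|(1+\kappa/2)$ uniformly in $\bsy$), so $|\fa(\bsy;e^h(\bsy),w(\bsy)-w^h)|\le C\,\|e^h(\bsy)\|_\bcX\,\|w(\bsy)-w^h\|_\bcY$. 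Taking the infimum over $w^h\in\bcY^h$ and applying the second approximation property in \eqref{eq:apprprop} with smoothness parameter $t'$ gives
\[
 \inf_{w^h\in\bcY^h}\|w(\bsy)-w^h\|_\bcY \,\le\, C_{t'}\,h^{t'}\,\|w(\bsy)\|_{\cY_{t'}} \,\le\, C\,h^{t'}\,\|G\|_{\cX'_{t'}}.
\]

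Finally, I would bound $\|e^h(\bsy)\|_\bcX$ by appealing to Theorem~\ref{thm:FEconvrate}, which gives $\|e^h(\bsy)\|_\bcX\le C\,h^{t}\|f\|_{\cY'_t}$. Multiplying the two estimates yields
\[
 |G(u(\bsy))-G(u^h(\bsy))|\,\le\, C\,h^{t+t'}\|f\|_{\cY'_t}\|G\|_{\cX'_{t'}},
\]
with constant independent of $\bsy\in\cS$ (since all intermediate constants $\bar\mu$, $C_{t'}$, and the uniform bound on $\fa(\bsy;\cdot,\cdot)$ are $\bsy$-independent). The only real subtlety is ensuring that the $\bsy$-dependence is controlled uniformly at every step: this is delivered by \eqref{eq:Bsiinfsup}, \eqref{eq:assW1infty}, and \eqref{eq:refAd}, each of which is stated uniformly in $\bsy$, so no extra care is required beyond tracking the constants. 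The estimate \eqref{eq:Gconvest} then follows with $\tau=t+t'$.
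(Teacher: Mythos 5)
Your proposal is correct and is precisely the classical Aubin--Nitsche duality argument that the paper invokes (the paper gives no details beyond citing it): adjoint problem via \eqref{eq:refAd}, Galerkin orthogonality, uniform boundedness of $\fa(\bsy;\cdot,\cdot)$ from Assumption~\ref{ass:AssBj}, the approximation property \eqref{eq:apprprop} for the dual solution, and the primal error bound from Theorem~\ref{thm:FEconvrate}. Nothing to add.
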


The result follows from a (classical)
Aubin-Nitsche duality argument \cite{NitSch74}.

\subsection{Dimension truncation}
\label{sec:dimtrunc}
In order to approximate the integral \eqref{eq:int} by QMC methods, we
truncate the infinite sum in \eqref{eq:Baffine} to $s$ terms, as indicated
in \eqref{eq:qmcG}. We denote by $u_s(\bsy)$ the solution of the corresponding parametric weak
problem \eqref{eq:parmOpEq}. Then Theorem~\ref{thm:BsigmaInv} holds when
$u(\bsy)$ is replaced by $u_s(\bsy)$. In addition to the assumption \eqref{eq:psumpsi0}, which implies
$\sum_{j\ge 1} \beta_j^p < \infty$ with $\beta_j$ defined as in
\eqref{eq:Bjsmall}, we assume that the operators $A_j$ are enumerated so
that
\begin{equation} \label{eq:ordered} 
  \beta_1 \ge \beta_2 \ge \cdots \ge \beta_j \ge \, \cdots\;.
\end{equation}

\begin{theorem} \label{thm:trunc}
Under Assumption~\ref{ass:AssBj}, for every $f\in \cY'$, for every
$\bsy\in U$ and for every $s\in\bbN$, the solution $u_s(\bsy)$ of the
$s$-term truncated parametric weak problem \eqref{eq:parmOpEq} satisfies,
with $\beta_j$ as defined in \eqref{eq:Bjsmall},
\begin{equation}\label{eq:Vdimtrunc}
  \| u(\bsy) - u_s(\bsy) \|_\cX
  \,\le\, \frac{C}{\mu}\, \|f\|_{\cY'}\,
  \sum_{j\ge s+1} \beta_j
\end{equation}
for some constant $C>0$ independent of $f$.
Moreover, for every $G(\cdot)\in \cX'$,
we have
\begin{equation}\label{eq:Idimtrunc}
  |I(G(u))- I(G(u^s))|
  \,\le\, \frac{\tilde{C}}{\mu}\, \|f\|_{\cY'}\, \|G\|_{\cX'}\,
  \bigg(\sum_{j\ge s+1} \beta_j\bigg)^2
\end{equation}
for some constant $\tilde{C}>0$ independent of $f$ and $G$.
In addition,
if conditions~\eqref{eq:psumpsi0} and \eqref{eq:ordered}
hold, then
\[
  \sum_{j\ge s+1} \beta_j
  \,\le\,
  \min\left(\frac{1}{1/p-1},1\right)
  \bigg(\sum_{j\ge1} \beta_j^p \bigg)^{1/p}
  s^{-(1/p-1)}\;.
\]
\end{theorem}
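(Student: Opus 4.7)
The plan is to establish the three bounds in sequence: an error-equation plus the uniform inf-sup estimate for the first, a duality/symmetrization argument for the second, and two complementary Stechkin-type tail estimates for the third.

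For \eqref{eq:Vdimtrunc}, set $A_s(\bsy) := A_0 + \sum_{j=1}^{s} y_j A_j$, so that $A(\bsy)u(\bsy)=f=A_s(\bsy)u_s(\bsy)$. Subtracting the two weak formulations and using that $u_s(\bsy)$ is independent of $y_j$ for $j>s$ gives the error equation
\begin{equation*}
 \fa(\bsy;\, u(\bsy)-u_s(\bsy),\, v)
 \,=\, -\sum_{j>s} y_j\,\fa_j(u_s(\bsy),v)
 \quad\forall\, v\in\cY.
\end{equation*}
Applying the inf-sup bound \eqref{eq:Bsiinfsup} together with $|y_j|\le 1/2$ and the factorization $\|A_j v\|_{\cY'}\le \|A_0\|_{\cL(\cX,\cY')}\,\beta_j\,\|v\|_\cX$ coming from $A_j=A_0(A_0^{-1}A_j)$, followed by the a priori estimate \eqref{eq:apriori} applied to $u_s$, then yields \eqref{eq:Vdimtrunc}.

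For \eqref{eq:Idimtrunc}, the idea is to convert each linear factor of $\beta_j$ into a quadratic one via duality and symmetrization in $y_j$. Let $w(\bsy)\in\cY$ denote the adjoint solution satisfying $\fa(\bsy;v,w(\bsy))=G(v)$ for all $v\in\cX$, which exists with $\|w(\bsy)\|_\cY\le \mu^{-1}\|G\|_{\cX'}$ by the second inf-sup estimate in \eqref{eq:Bsiinfsup}. Testing the error equation with $v=w(\bsy)$ gives
\begin{equation*}
 G(u(\bsy)-u_s(\bsy))
 \,=\, -\sum_{j>s} y_j\,\fa_j(u_s(\bsy),w(\bsy)).
\end{equation*}
For each $j>s$, denote by $\bsy^{(j)}$ the point obtained from $\bsy$ by setting the $j$th coordinate to $0$. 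Since $u_s$ is independent of $y_j$ and $\int_{-1/2}^{1/2} y_j\,\rd y_j=0$, inside the integral over $U$ one may replace $w(\bsy)$ by $w(\bsy)-w(\bsy^{(j)})$ without changing the value. Differentiating the adjoint equation yields $\partial_{y_j} w = -(A(\bsy)^*)^{-1}A_j^* w$, whence $\|\partial_{y_j} w\|_\cY \le C\beta_j\|w\|_\cY$, and by the mean value inequality $\|w(\bsy)-w(\bsy^{(j)})\|_\cY \le C|y_j|\beta_j\mu^{-1}\|G\|_{\cX'}$. Combining with $\|A_j u_s(\bsy)\|_{\cY'}\le C\beta_j\mu^{-1}\|f\|_{\cY'}$ produces a per-term integrand bounded by $y_j^2\,\beta_j^2\,\|f\|_{\cY'}\|G\|_{\cX'}$; integrating, summing over $j>s$, and using $\sum_{j>s}\beta_j^2\le(\sum_{j>s}\beta_j)^2$ gives \eqref{eq:Idimtrunc}. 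The main obstacle here is the adjoint-derivative bound in the non-self-adjoint setting, which relies on the uniform inf-sup condition on $A(\bsy)^*$ built into \eqref{eq:Bsiinfsup}.

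For the final tail bound, let $B:=(\sum_{k\ge 1}\beta_k^p)^{1/p}$; monotonicity \eqref{eq:ordered} gives $\beta_j\le B\,j^{-1/p}$ for all $j$. Two complementary routes bound the tail. For $p<1$, integral comparison yields
\begin{equation*}
 \sum_{j>s}\beta_j \,\le\, B\int_s^\infty x^{-1/p}\,\rd x
 \,=\, \frac{1}{1/p-1}\,B\,s^{-(1/p-1)}.
\end{equation*}
Alternatively, since $1-p\ge 0$ and $\beta_j$ is decreasing,
\begin{equation*}
 \sum_{j>s}\beta_j
 \,=\, \sum_{j>s}\beta_j^{1-p}\,\beta_j^p
 \,\le\, \beta_{s+1}^{1-p}\sum_{k\ge 1}\beta_k^p
 \,\le\, B^{1-p}(s+1)^{-(1-p)/p}\cdot B^p
 \,\le\, B\,s^{-(1/p-1)},
\end{equation*}
which remains valid at $p=1$ (where $s^0=1$). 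Taking the smaller of the two constants yields the claimed factor $\min(1/(1/p-1),\,1)$, completing the theorem.
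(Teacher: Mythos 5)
Your proposal is correct, and for the first estimate and for the Stechkin-type tail bound (the two complementary estimates producing the factor $\min(1/(1/p-1),1)$, one by integral comparison and one by pulling out $\beta_{s+1}^{1-p}$) it coincides with the argument the paper points to. For the middle estimate \eqref{eq:Idimtrunc}, however, you take a genuinely different route. The paper gives no details and refers to \cite[Theorem~5.1]{KSS12}, whose mechanism is a resolvent/Neumann-series expansion: writing $R_s(\bsy)=\sum_{j>s}y_jA_j$ and $A(\bsy)u=f=A_s(\bsy)u_s$, one has $u-u_s=-A(\bsy)^{-1}R_s(\bsy)u_s(\bsy)=-A_s(\bsy)^{-1}R_s(\bsy)u_s(\bsy)+A(\bsy)^{-1}R_s(\bsy)A_s(\bsy)^{-1}R_s(\bsy)u_s(\bsy)$; the first term is linear in each $y_j$, $j>s$, with coefficients independent of those variables, hence integrates to zero over $U$ because $\int_{-1/2}^{1/2}y_j\,\rd y_j=0$, and the second term is already quadratic in the tail, giving $C(\sum_{j>s}\beta_j)^2$ directly. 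You instead keep the error identity first order and pair it with the adjoint state $w(\bsy)$, extracting the second factor of $\beta_j$ from the $y_j$-variation of $w$ via $\partial_{y_j}w=-(A(\bsy)^*)^{-1}A_j^*w$ and the mean value inequality after the symmetrization $w(\bsy)\mapsto w(\bsy)-w(\bsy^{(j)})$. Both proofs exploit exactly the same cancellation $\int y_j\,\rd y_j=0$, and both are valid. The expansion route costs nothing beyond Assumption~\ref{ass:AssBj} and the uniform bound $\|A(\bsy)^{-1}\|\le 1/\mu$, and needs no differentiability of the adjoint solution; your duality route requires the (easy, but strictly speaking additional) lemma that $\bsy\mapsto w(\bsy)$ is differentiable with $\|\partial_{y_j}w(\bsy)\|_{\cY}\le\mu^{-1}\|A_0\|_{\cL(\cX,\cY')}\,\beta_j\,\|w(\bsy)\|_{\cY}$ --- the adjoint analogue of Theorem~\ref{thm:Dsibound} --- which you assert but should record explicitly. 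In exchange, your argument isolates cleanly where each power of $\beta_j$ comes from and carries over verbatim to settings where only the primal and adjoint variational problems, rather than an operator inverse, are available.
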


The proof is a generalization of \cite[Theorem~5.1]{KSS12}.

\section{Higher order QMC error analysis}
\label{Sc:HiOrdQMC}

Throughout this section, we consider a \emph{general} $s$-variate
integrand $F$ defined over the unit cube $[0,1]^s$, and we approximate the
$s$-dimensional integral
\begin{equation}\label{eq:IsF}
 I_s(F) \,:=\,
 \int_{[0,1]^s} F(\bsy) \,\rd\bsy
\end{equation}
by an $N$-point QMC method, i.e., an equal-weight quadrature rule of the
form
\begin{equation}\label{eq:QNs}
  Q_{N,s}(F) \,:=\,
  \frac{1}{N} \sum_{n=0}^{N-1} F(\bsy_n)\;,
\end{equation}
with judiciously chosen points $\bsy_0,\ldots,\bsy_{N-1} \in [0,1]^s$. We
shall always bear in mind the special case where the integrand
$F(\bsy) = G(u^h_s(\bsy-\bhalf))$ is a linear functional applied to the
solution of a parametric operator equation, see \eqref{eq:int} and
\eqref{eq:qmcG}. 
\begin{theorem}[Main Result] \label{thm:main}
Let $s\ge 1$ and $N = b^m$ for $m\ge 1$ and prime $b$. Let $\bsbeta =
(\beta_j)_{j\ge 1}$ be a sequence of positive numbers, let $\bsbeta_s =
(\beta_j)_{1\le j \le s}$, and assume that
\begin{equation} \label{p-sum}
  \exists\, 0<p\le 1 : \quad \sum_{j=1}^\infty \beta_j^p < \infty\;.
\end{equation}
Define
\begin{equation} \label{alpha}
  \alpha \,:=\, \lfloor 1/p \rfloor +1 \;.
\end{equation}
If \eqref{p-sum} holds only with $p=1$, we assume additionally
that $\sum_{j=1}^\infty \beta_j$ is
small as in \eqref{eq:small} below.
Suppose we have an integrand $F$ whose partial derivatives satisfy
\begin{equation} \label{eq:like-norm}
 \forall\, \bsnu \in \{0, 1, \ldots, \alpha\}^s: \quad
 | (\partial^{\bsnu}_\bsy F)(\bsy)| \,\le\, c\, |\bsnu|!\, \bsbeta_s^{\bsnu}
\end{equation}
for some constant $c>0$.
Then, an interlaced polynomial lattice rule of
order $\alpha$ with $N$ points can be constructed using a fast
component-by-component algorithm, with cost
$\calO(\alpha\,s\, N\log N + \alpha^2\,s^2 N)$ operations,
such that
\[
  |I_s(F) - Q_{N,s}(F)|
  \,\le\, C_{\alpha,\bsbeta,b,p}\, N^{-1/p} \;,
\]
where $C_{\alpha,\bsbeta,b,p} < \infty$ is a constant independent of $s$
and $N$.
\end{theorem}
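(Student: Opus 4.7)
The plan is to reduce the theorem to a worst-case error analysis for interlaced polynomial lattice rules in a weighted Banach space tailored to the derivative bound \eqref{eq:like-norm}, and then to optimise the weights so that the resulting bound is $s$-independent and decays at the rate $N^{-1/p}$.

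First I would introduce, for each nonempty $\fku \subseteq \{1{:}s\}$ and each multi-index $\bsnu_\fku \in \{1,\ldots,\alpha\}^{|\fku|}$, a positive SPOD weight $\gamma_{\fku,\bsnu_\fku}$ of the form
\begin{equation*}
  \gamma_{\fku,\bsnu_\fku} \;=\; \Gamma_{|\bsnu_\fku|} \prod_{j\in \fku} \beta_j^{\nu_j}\,,
\end{equation*}
where $\Gamma_\ell$ is a positive sequence to be chosen (the factorial-like growth in $|\bsnu|$ appearing in \eqref{eq:like-norm} forces an order-dependent factor, ruling out pure product weights). I would work in the Banach space with norm of $r=\infty$ type,
\begin{equation*}
  \|F\|_{s,\bsgamma}
  \;=\; \max_{\fku \subseteq \{1{:}s\}}\; \max_{\bsnu_\fku\in\{1,\ldots,\alpha\}^{|\fku|}}\;
  \frac{1}{\gamma_{\fku,\bsnu_\fku}}\,
  \Bigl\| \partial^{(\bsnu_\fku,\bszero)}_{\bsy} F \Bigr\|_{L_\infty([0,1]^s)}.
\end{equation*}
The assumption \eqref{eq:like-norm} together with the choice $\Gamma_\ell \ge c\,\ell!$ then immediately yields $\|F\|_{s,\bsgamma} \le 1$, say.

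Next I would invoke the worst-case error bound for interlaced polynomial lattice rules of order $\alpha$ established earlier in Section~\ref{Sc:HiOrdQMC} (using Dick's interlacing construction on top of polynomial lattice rules). In the $r=\infty$, general $q$ setting this gives a Walsh-coefficient style error estimate of the form
\begin{equation*}
  |I_s(F) - Q_{N,s}(F)|
  \;\le\; \|F\|_{s,\bsgamma}\; e_{N,s,\alpha,\bsgamma},
\end{equation*}
where $e_{N,s,\alpha,\bsgamma}$ is a sum, over nonempty subsets $\fku$ and multi-indices $\bsnu_\fku\in\{1,\ldots,\alpha\}^{|\fku|}$, of $\gamma_{\fku,\bsnu_\fku}$ times an explicit one-dimensional quantity involving the digit interlacing and $\alpha$. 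Then I would show that a CBC-constructed generating vector achieves, for any $\lambda \in (1/(2\alpha),1]$,
\begin{equation*}
  e_{N,s,\alpha,\bsgamma}
  \;\le\; \frac{1}{N^{1/\lambda}}
  \Bigl(\sum_{\emptyset\ne \fku\subseteq\{1{:}s\}}
   \sum_{\bsnu_\fku\in\{1,\ldots,\alpha\}^{|\fku|}}
   \gamma_{\fku,\bsnu_\fku}^{\lambda}\, C_{\alpha,b,\lambda}^{|\fku|}
   \prod_{j\in\fku} \varphi(\nu_j,\lambda)\Bigr)^{1/\lambda},
\end{equation*}
with $\varphi(\nu,\lambda)$ an explicit factor coming from the worst-case analysis of order-$\alpha$ interlacing (depending on $\min(\nu,\alpha)$). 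This is the standard output of a CBC argument with induction over coordinates, using Jensen's inequality to move the power $\lambda$ past the product structure in $\fku$.

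Now comes the $s$-independence and the rate. Substituting the SPOD weights, the bracketed sum factorises as
\begin{equation*}
  \sum_{\ell=1}^{s} \Gamma_\ell^\lambda \sum_{\satop{\fku\subseteq\{1{:}s\}}{|\fku|=\ell}}
  \sum_{\bsnu_\fku\in\{1,\ldots,\alpha\}^{|\fku|}}
  \prod_{j\in\fku} \beta_j^{\lambda\nu_j}\,\varphi(\nu_j,\lambda)\,C_{\alpha,b,\lambda},
\end{equation*}
and can be bounded by
\begin{equation*}
  \sum_{\ell\ge 1} \Gamma_\ell^\lambda\,\frac{1}{\ell!}
  \Bigl( \sum_{j\ge 1}\sum_{\nu=1}^{\alpha} C_{\alpha,b,\lambda}\,\varphi(\nu,\lambda)\,\beta_j^{\lambda\nu}\Bigr)^{\ell}.
\end{equation*}
Choosing $\lambda = p$ when $p<1$ (and $\lambda$ slightly above $1/(2\alpha)$ in the boundary case) makes the inner series converge thanks to \eqref{p-sum}, since $\alpha = \lfloor 1/p\rfloor+1 \ge 1/p$ guarantees $1/\lambda > 2\alpha$... wait, one needs $\lambda>1/(2\alpha)$, i.e.\ $2\alpha\lambda>1$; with $\lambda=p$ this is $2p\lfloor 1/p\rfloor + 2p > 1$, which holds. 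The factorial growth of $\Gamma_\ell$ (necessary to dominate \eqref{eq:like-norm}) is then compensated by the $1/\ell!$ from the subset count, and the $p=1$ case is handled by the extra smallness hypothesis on $\sum_j\beta_j$ to keep the geometric series convergent. This yields $|I_s(F)-Q_{N,s}(F)|\le C_{\alpha,\bsbeta,b,p} N^{-1/p}$ with constant independent of $s$.

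Finally I would address the CBC cost. The obstacle here, and in my view the hardest part of the theorem, is \emph{fast} CBC for SPOD weights: the order-dependent factor $\Gamma_\ell$ couples the weights across coordinates in a way that does not match the usual product or POD structures for which an $\calO(sN\log N)$ algorithm is known. I would resolve this by maintaining, at each CBC step, not a single vector of partial sums but an array indexed by the ``current order'' $\ell\in\{0,\ldots,\alpha s\}$ of contributions, updating it after adding the $j$-th component with cost $\calO(\alpha N\log N)$ for the FFT-based convolution and $\calO(\alpha^2 s N)$ for mixing orders; summed over $j=1,\ldots,s$ this yields the claimed $\calO(\alpha sN\log N + \alpha^2 s^2 N)$ bound. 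Interlacing of order $\alpha$ of the resulting polynomial lattice rule then produces the required QMC point set.
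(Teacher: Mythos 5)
Your overall architecture coincides with the paper's: a sup-type ($r=\infty$) weighted norm driven by \eqref{eq:like-norm}, SPOD weights carrying a factor that grows like $|\bsnu|!$, a worst-case error bound for interlaced polynomial lattice rules, a CBC construction bounded via Jensen's inequality with exponent $\lambda=p$, and a fast implementation that maintains auxiliary arrays indexed by the derivative order. One minor slip first: the admissible range for the Jensen exponent is $\lambda\in(1/\alpha,1]$, not $(1/(2\alpha),1]$; an order-$\alpha$ interlaced rule cannot deliver rates beyond $N^{-\alpha}$, and the CBC bound needs $b^{\alpha\lambda}-b>0$. Since $\lambda=p>1/\alpha$ when $\alpha=\lfloor 1/p\rfloor+1$, your conclusion is unaffected, but the stated threshold is wrong.

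The substantive gap is in the step that establishes $s$-independence. Your weight is $\gamma_{\fku,\bsnu_{\fku}}=\Gamma_{|\bsnu_{\fku}|}\prod_{j\in\fku}\beta_j^{\nu_j}$ with $\Gamma_L\gtrsim L!$, where $|\bsnu_{\fku}|=\sum_{j\in\fku}\nu_j$ can be as large as $\alpha|\fku|$. In the factorised bound you write $\sum_{\ell=1}^{s}\Gamma_\ell^\lambda\sum_{|\fku|=\ell}\cdots\le\sum_{\ell}\Gamma_\ell^\lambda\,\frac{1}{\ell!}(\cdots)^\ell$, i.e.\ you index both the factorial inside the weight and the compensating $1/\ell!$ from the subset count by $\ell=|\fku|$. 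These two factorials are not aligned: to dominate \eqref{eq:like-norm} on a subset of cardinality $\ell$ the weight must carry up to $((\alpha\ell)!)^\lambda$, and $((\alpha\ell)!)^\lambda/\ell!$ grows like $(\ell!)^{\alpha\lambda-1}$ up to exponential factors, which diverges because $\alpha\lambda=\alpha p>1$. So the series, as written, is not summable and the $s$-independence does not follow. The paper's proof repairs exactly this mismatch: each term $|\bsnu|!\prod_j\widetilde\beta_j^{\nu_j}$ is rewritten as $|\setv|!\prod_{j\in\setv}d_j$ with $d_j=\widetilde\beta_{\lceil j/\alpha\rceil}$ and $|\setv|=|\bsnu|$ (each coordinate is expanded into $\alpha$ copies, which is possible precisely because $\nu_j\le\alpha$), so the subset count runs over sets of cardinality $|\bsnu|$ and contributes $1/|\bsnu|!$, leaving $\sum_{L}(L!)^{\lambda-1}\bigl(\sum_j d_j^\lambda\bigr)^{L}$, which converges for $\lambda=p<1$ by the ratio test and, for $p=1$, under the smallness condition \eqref{eq:small}. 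You need this reindexing device, or an equivalent multinomial bound organised by the total derivative order rather than by $|\fku|$, to close the argument; the remainder of your plan (worst-case bound, CBC induction, and the $\calO(\alpha s N\log N+\alpha^2 s^2 N)$ cost via order-indexed partial sums and FFT) matches the paper.
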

\begin{theorem}
If \eqref{eq:like-norm} in Theorem~\ref{thm:main} is replaced by
\begin{equation} \label{eq:like-norm2}
  \forall\, \bsnu \in \{0, 1, \ldots, \alpha\}^s: \quad
 \sup_{\bsy\in U} | (\partial^{\bsnu}_\bsy F)(\bsy)|
\,\le\,
c\, \bsnu!\, \bsbeta_s^{\bsnu}
\;,
\end{equation}
then the result still holds, but the cost of the CBC algorithm is only
$\calO(\alpha\,s\, N\log N)$ operations, and
the additional condition \eqref{eq:small} is not required when $p=1$.
\end{theorem}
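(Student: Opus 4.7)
The plan is to follow the same architecture as the proof of Theorem~\ref{thm:main}, but to exploit the product structure of the new derivative bound $|(\partial^{\bsnu}_\bsy F)(\bsy)| \le c\, \bsnu!\, \bsbeta_s^{\bsnu}$ to replace the SPOD weights with ordinary \emph{product weights}. Concretely, I would re-run the worst-case error analysis in the weighted Banach space of smoothness $\alpha$ with generic weights $\gamma_\setu$, obtaining, as in the main theorem, a bound of the form $|I_s(F) - Q_{N,s}(F)| \le \|F\|_{s,\bsgamma,\alpha} \cdot e_{N,s,\alpha,\bsgamma}^{\mathrm{wor}}$ where the norm is estimated via the hypothesis \eqref{eq:like-norm2}. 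Because $\bsnu! = \prod_{j \ge 1} \nu_j!$ factorizes over coordinates, the bound on $\|F\|_{s,\bsgamma,\alpha}$ also factorizes, and the natural optimal choice of weights becomes
\begin{equation*}
  \gamma_\setu \,=\, \prod_{j \in \setu} \gamma_j\;,
  \quad\text{with}\quad
  \gamma_j \,=\, \left( \sum_{\nu=1}^\alpha (\nu!)^2 \Big( \frac{2^{\alpha(\alpha-1)/2}}{((\alpha!)^2 b^{(\alpha-1)/2})^\nu} + \cdots \Big) \beta_j^\nu \right)^{1/2},
\end{equation*}
i.e.\ genuine product weights rather than SPOD weights, with a sum over a single index $\nu$ in place of the tensor sum over $\bsnu_\setu$ that produced the SPOD structure.

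Once the weights are of product form, two savings appear. First, the standard fast CBC construction of interlaced polynomial lattice rules for product weights runs in $\calO(\alpha s N \log N)$ operations, because the per-component update in the CBC search reduces to a product update involving only the single weight $\gamma_j$, so the quadratic-in-$s$ overhead $\calO(\alpha^2 s^2 N)$ incurred in the SPOD case for propagating partial order-dependent contributions is avoided. Second, to conclude the $s$-independent error bound of rate $N^{-1/p}$ via the Jensen-type argument from the proof of Theorem~\ref{thm:main}, one needs the convergence of the infinite product $\prod_{j \ge 1}(1 + C \sum_{\nu=1}^\alpha (\nu!)^q \beta_j^\nu)$ (for an appropriate exponent $q$ extracted from the worst-case error formula). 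Taking logarithms, this reduces to convergence of $\sum_j \beta_j^p$ (for $p < 1$), or $\sum_j \beta_j$ (for $p=1$), with \emph{no} further smallness condition; the smallness assumption \eqref{eq:small} was needed in Theorem~\ref{thm:main} precisely to tame an additional $|\bsnu|!$-driven sum against a geometric series, a step which is obviated here because the factorial now distributes over coordinates.

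I expect the main obstacle to be a careful bookkeeping step rather than a conceptual one: one must re-derive the explicit product weights and verify that they agree, up to a constant, with the weights one would get by applying Jensen's inequality with exponent $\lambda = p/(2-p)$ (or the analogue needed in the non-Hilbert $r=\infty$ setting) so that the resulting convergence rate is indeed $N^{-1/p}$ with an $s$-independent constant. It also has to be checked that the fast CBC cost analysis for product weights extends to interlaced polynomial lattice rules of order $\alpha$, picking up the factor $\alpha$ from the interlacing but no extra factor of $s$ or $\alpha$, which follows by applying the product-weight fast CBC construction coordinate-by-coordinate to the digital net underlying the interlacing. Everything else — the worst-case error formula for interlaced polynomial lattice rules, the CBC existence/convergence argument, and the final combination with the derivative bound — carries over verbatim from the proof of Theorem~\ref{thm:main}.
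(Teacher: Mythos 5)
Your proposal follows essentially the same route as the paper: the factorization of $\bsnu!$ over coordinates turns the SPOD weights into genuine product weights $\gamma_\setu=\prod_{j\in\setu}\sum_{\nu=1}^{\alpha}\nu!\,2^{\delta(\nu,\alpha)}\beta_j^{\nu}$, the product-weight fast CBC then costs $\calO(\alpha\,s\,N\log N)$, and the estimate $\prod_{j}\bigl(1+(B\sum_{\nu}\nu!\,2^{\delta(\nu,\alpha)}\beta_j^{\nu})^{\lambda}\bigr)\le\exp\bigl(\sum_{j}(B\sum_{\nu}\nu!\,2^{\delta(\nu,\alpha)}\beta_j^{\nu})^{\lambda}\bigr)$ removes the smallness condition at $p=1$, exactly as you argue. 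The only correction is a detail you already flagged as bookkeeping: in the paper's $r=\infty$ Banach setting the weights enter linearly (no square root and no $(\nu!)^2$) and the Jensen exponent is $\lambda=p$ rather than $p/(2-p)$.
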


We shall prove these theorems in stages in the following subsections,
starting by introducing a new function space setting motivated by
\eqref{eq:like-norm} and \eqref{eq:like-norm2}.
Before we proceed, we note that the ``interlaced polynomial lattice
rules'' used in the theorems above, to be formally introduced in
\S\ref{sec:PolLattDims}, are \emph{deterministic}. We always have $\alpha
\ge 2$, where $\alpha=2$ is obtained with $p=1$. This indicates that we
require interlaced polynomial lattice rules of order $2$ to achieve a
convergence rate of $N^{-1}$, with the implied constant independent of the
dimension.

We also remark that \eqref{eq:Dsibound} in Theorem~\ref{thm:Dsibound}
yields an integrand $F(\bsy) = G(u_s(\bsy))$ (after dimension truncation)
which satisfies \eqref{eq:like-norm}, with $c=C_0\| f
\|_{\cY'}\,\|G\|_{\cX'}$.

\subsection{A new function space setting for smooth integrands}
\label{sec:space}

In this subsection we consider numerical integration for \eqref{eq:IsF}
for smooth integrands $F$ of $s$ variables using a family of QMC rules
called \emph{digital nets}, see, e.g., \cite{Nie92,DiPi10}. We define in
the following a class of function spaces on the unit cube in finite
dimension $s$ which contain the integrands $F(\bsy) =
G(u^h_s(\bsy-\bhalf))$, that is, integrands which arise from linear
functionals of solutions of the parametric operator equation
\eqref{eq:main}.
\begin{definition}[Norm and function space] \label{def_F_norm}
Let $\alpha, s\in\bbN$, $1\le q \le \infty$ and $1\le r \le \infty$, and let $\bsgamma =
(\gamma_\setu)_{\setu\subset\bbN}$ be a collection of nonnegative real
numbers, known as \emph{weights}. Assume further that $F: [0,1]^s \to
\mathbb{R}$ has partial derivatives of orders up to $\alpha$ with respect
to each variable. Then we define the norm of $F$ by a higher order unanchored Sobolev norm
%
\begin{align}\label{eq:defFabs}
 \|F\|_{s,\alpha,\bsgamma,q,r}
 &:=
 \Bigg( \sum_{\setu\subseteq\{1:s\}} \Bigg( \gamma_\setu^{-q}
 \sum_{\setv\subseteq\setu} \sum_{\bstau_{\setu\setminus\setv} \in \{1:\alpha\}^{|\setu\setminus\setv|}} \\
 &\qquad\qquad\quad
 \int_{[0,1]^{|\setv|}} \bigg|\int_{[0,1]^{s-|\setv|}} \!
 (\partial^{(\bsalpha_\setv,\bstau_{\setu\setminus\setv},\bszero)}_\bsy F)(\bsy) \,\rd \bsy_{\{1:s\} \setminus\setv}
 \bigg|^q \rd \bsy_\setv \Bigg)^{r/q} \Bigg)^{1/r}, \nonumber
\end{align}
with the obvious modifications if $q$ or $r$ is infinite. Here $\{1:s\}$
is a shorthand notation for the set $\{1,2,\ldots,s\}$, and
$(\bsalpha_\setv,\bstau_{\setu\setminus\setv},\bszero)$ denotes a sequence
$\bsnu$ with $\nu_j = \alpha$ for $j\in\setv$, $\nu_j = \tau_j$ for
$j\in\setu\setminus\setv$, and $\nu_j = 0$ for $j\notin\setu$.
Let $\calW_{s,\alpha,\bsgamma,q,r}$ denote the Banach space of all such
functions $F$ with finite norm.
\end{definition}

\begin{definition}[Digital net]
Let $b$ be prime and $\alpha,s,m\in\bbN$. Let $C_1,\ldots,C_s$ be $\alpha
m\times m$ matrices over $\bbZ_b$; these are known as the \emph{generating
matrices}. For each integer $0 \le n < b^m$, let $n = \eta_0 + \eta_1 b +
\cdots + \eta_{m-1} b^{m-1}$ be the $b$-adic expansion of $n$. For each
$1\le j\le s$ we compute
$ 
  (\zeta_1,\zeta_2,\ldots,\zeta_{\alpha m})^\top \,=\, C_j\, (\eta_0, \eta_1,\ldots,\eta_{m-1})^\top\;,
$ 
set
$ 
  y_j^{(n)} \,=\, \frac{\zeta_1}{b} + \frac{\zeta_2}{b^2} + \cdots + \frac{\zeta_{\alpha m}}{b^{\alpha m}}
$ 
and set
$ \bsy_n = (y_1^{(n)}, y_2^{(n)}, \ldots, y_s^{(n)})$.

Then, the resulting point set $\calS = \{\bsy_n\}_{n=0}^{b^m-1}
\subset [0,1]^s$ is called a \emph{digital net}.
\end{definition}

We derive an upper bound on the \emph{worst case error} of a digital net in
$\calW_{s,\alpha,\bsgamma,q,r}$.

\begin{theorem}[Worst case error bound] \label{thm:wce}
Let $\alpha, s\in\bbN$ with $\alpha>1$, $1\le q\le \infty$ and $1\le r\le \infty$, and let
$\bsgamma = (\gamma_\setu)_{\setu\subset\bbN}$ denote a collection of
weights. Let $r'\ge 1$ satisfy $1/r + 1/r' = 1$. Let $b$ be
prime, $m\in\bbN$, and let $\calS=\{\bsy_n\}_{n=0}^{b^m-1}$ denote a
digital net with generating matrices $C_1,\ldots,C_s\in\bbZ_b^{\alpha
m\times m}$. Then we have
\[
  \sup_{\|F\|_{s,\alpha,\bsgamma,q,r} \le 1}
  \left| \frac{1}{b^m} \sum_{n=0}^{b^m-1} F(\bsy_n) - \int_{[0,1]^s} F(\bsy) \,\mathrm{d} \bsy \right|
  \,\le\, e_{s,\alpha,\bsgamma,r'}(\calS)\;,
\]
with
\begin{align} \label{def-B}
  e_{s,\alpha,\bsgamma,r'}(\calS)
  \,:=\, \Bigg(\sum_{\emptyset \neq \setu \subseteq \{1:s\}}
  \bigg(C_{\alpha,b}^{|\setu|}\, \gamma_\setu \sum_{\bsk_\setu \in \setD_\setu^*}
  b^{-\mu_{\alpha}(\bsk_\setu)} \bigg)^{r'} \Bigg)^{1/r'}\;.
\end{align}
Here $\setD_\setu^*$ is the ``dual net without $0$ components'' projected
to the components in $\setu$, defined by
\begin{equation} \label{dual}
  \setD_\setu^* \,:=\, \left\{ \bsk_\setu \in \bbN^{|\setu|}\,:\,
  \sum_{j\in\setu} C_j^\top {\rm tr}_{\alpha m}(k_j) = \bszero \in \bbZ_b^m \right\} \;,
\end{equation}
where ${\rm tr}_{\alpha m}(k) := (\varkappa_0, \varkappa_1, \ldots,
\varkappa_{\alpha m-1})^\top$ if $k = \varkappa_0 + \varkappa_1 b
+\varkappa_2 b^2 + \cdots$ with $\varkappa_i\in \{0,\ldots,b-1\}$.
Moreover, we have $\mu_{\alpha}(\bsk_\setu) = \sum_{j\in\setu}
\mu_\alpha(k_j)$ with
\begin{equation} \label{def-mu-k}
  \mu_\alpha(k)
  \,:=\,
  \begin{cases}
  0 & \mbox{if } k = 0, \\
  a_1 + \cdots + a_{\min(\alpha,\rho)} & 
  \begin{aligned}
   \mbox{if }
  k &= \kappa_1 b^{a_1-1} + \cdots + \kappa_\rho b^{a_\rho-1} \mbox{ with} \\
    &\kappa_i\in \{1,\ldots,b-1\} \mbox{ and } a_1>\cdots>a_\rho>0,
  \end{aligned}
  \end{cases}
\end{equation}
and
\begin{align}\label{eq:Cab}
 &C_{\alpha,b}
 \,:=\,
 \max\left(\frac{2}{(2\sin\frac{\pi}{b})^{\alpha}},\max_{1\le z\le\alpha-1}\frac{1}{(2\sin\frac{\pi}{b})^z}\right)
 \nonumber\\
 &\qquad\qquad\qquad\times
 \left(1+\frac{1}{b}+\frac{1}{b(b+1)}\right)^{\alpha-2}
 \left(3 + \frac{2}{b} + \frac{2b+1}{b-1} \right)\;.
\end{align}
\end{theorem}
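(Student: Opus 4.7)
The plan is to use Walsh function analysis, which is the natural harmonic framework for digital nets. The starting point is the absolutely convergent Walsh series $F(\bsy) = \sum_{\bsk \in \bbN_0^s} \hat{F}(\bsk)\, \wal_\bsk(\bsy)$ together with the character-sum identity
\[
  \frac{1}{b^m} \sum_{n=0}^{b^m-1} \wal_\bsk(\bsy_n) \;=\;
  \begin{cases} 1 & \text{if } \bsk \in \setD, \\ 0 & \text{otherwise,} \end{cases}
\]
where $\setD$ is the dual group of the digital net with generating matrices $C_1,\ldots,C_s$. This identity is essentially the definition of $\setD$, and combined with $\int_{[0,1]^s} \wal_\bsk(\bsy)\, \rd\bsy = \delta_{\bsk,\bszero}$ it gives the exact error representation
\[
  \frac{1}{b^m}\sum_{n=0}^{b^m-1} F(\bsy_n) \;-\; \int_{[0,1]^s} F(\bsy)\, \rd \bsy \;=\; \sum_{\bszero \neq \bsk \in \setD^*} \hat{F}(\bsk),
\]
reducing the problem to a Walsh-coefficient bound and a sum over the dual net.

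The key lemma, which I would establish first, is a Walsh coefficient bound tailored to the norm in Definition~\ref{def_F_norm}: for every $\bsk$ with support $\setu := \{j: k_j \neq 0\}$,
\[
  |\hat{F}(\bsk)| \;\le\; C_{\alpha,b}^{|\setu|}\, b^{-\mu_\alpha(\bsk)}\, B_\setu(F,\bsk),
\]
where $B_\setu(F,\bsk)$ is a linear functional of $F$ built from exactly the mixed integrals of the derivatives $\partial^{(\bsalpha_\setv,\bstau_{\setu\setminus\setv},\bszero)}_\bsy F$ for $\setv \subseteq \setu$ and $\bstau_{\setu\setminus\setv} \in \{1:\alpha\}^{|\setu\setminus\setv|}$ that appear in the norm. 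The proof proceeds by $\alpha$-fold integration by parts in each coordinate $j \in \setu$ against antiderivatives of $\wal_{k_j}$: when all $\alpha$ antiderivatives have vanishing boundary contributions, they supply the full decay factor $b^{-\mu_\alpha(k_j)}$ together with $\partial_{y_j}^{\alpha}$; otherwise only $\tau_j < \alpha$ integrations go through cleanly, and the remainder contributes a residual boundary term carrying the lower-order derivative $\partial_{y_j}^{\tau_j}$. The constant $C_{\alpha,b}$ in \eqref{eq:Cab} collects the one-dimensional Walsh coefficient decay $(2\sin(\pi/b))^{-\alpha}$ and the polynomial boundary-term bounds produced by the iterated integration by parts.

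Next I would substitute this bound into the error representation, split the sum over $\bszero \neq \bsk \in \setD^*$ according to its support $\setu$, and apply H\"older's inequality in two stages: first an inner $L_q$/$L_{q'}$ H\"older in $\rd\bsy_\setv$ to separate the absolute value inside $B_\setu(F,\bsk)$ from the derivative integrand appearing in the norm, and then an outer $\ell_r$/$\ell_{r'}$ H\"older across $\setu$, after pairing each $\gamma_\setu^{-1}$ hidden inside the derivative side with $\gamma_\setu$ on the geometric side. The derivative side assembles precisely into $\|F\|_{s,\alpha,\bsgamma,q,r} \le 1$, while the remaining geometric factor matches the defining expression \eqref{def-B} for $e_{s,\alpha,\bsgamma,r'}(\calS)$, completing the proof after passing to the supremum.

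The main technical obstacle is the Walsh coefficient bound itself. Identifying exactly which derivative orders $\bstau_{\setu\setminus\setv}$ arise from the boundary terms of the iterated integration by parts, and showing that the contributions of the polynomial antiderivatives of Walsh functions assemble to the precise factor $(1 + 1/b + 1/(b(b+1)))^{\alpha-2}(3 + 2/b + (2b+1)/(b-1))$ in \eqref{eq:Cab}, requires careful one-dimensional analysis extending the higher-order Walsh estimates of Dick. Once this core 1D estimate is tensorized to the multivariate bound with the correct combinatorial bookkeeping in $(\setv,\bstau_{\setu\setminus\setv})$, the outer assembly via the two-level H\"older argument is essentially mechanical.
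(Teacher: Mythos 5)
Your proposal follows essentially the same route as the paper: an exact error representation over the dual net via the character property of digital nets, a Walsh-coefficient decay bound of the form $|\widehat{F}(\bsk)|\le C_{\alpha,b}^{|\setu|}\, b^{-\mu_\alpha(\bsk)}\cdot(\cdots)$ tied to the mixed derivatives in the norm, and a two-stage H\"older argument ($L_1\le L_q$ on the inside, $\ell_r/\ell_{r'}$ over the subsets $\setu$ on the outside). The paper organizes the coefficient bound through an explicit Bernoulli-polynomial ANOVA decomposition of $F$ and imports the one-dimensional estimate from Dick's Theorem~14 rather than re-deriving it by iterated integration by parts, but these are presentational rather than substantive differences.
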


\begin{proof}
Assume that $\|F\|_{s,\alpha,\bsgamma,q,r} < \infty$. Let $B_\tau$ denote
the Bernoulli polynomial of degree~$\tau$ and let $b_\tau = (\tau!)^{-1}
B_\tau$. Furthermore, let $\widetilde{b}_\tau$ denote the one-periodic
extension of the polynomial $b_\tau:[0,1)\to\mathbb{R}$. We claim that $F$
can be represented by
\begin{align}\label{eq_anova}
  F(\bsy) \,=\, \sum_{\setu\subseteq\{1:s\}} F_\setu(\bsy_\setu)\;,
\end{align}
where
\begin{align}\label{eq_repre}
  F_\setu(\bsy_\setu)
  &\,=\,
  \sum_{\setv\subseteq\setu} \sum_{\bstau_{\setu\setminus\setv} \in \{1:\alpha\}^{|\setu\setminus\setv|}}
  \bigg(\prod_{j\in\setu\setminus\setv} b_{\tau_j}(y_j)\bigg) (-1)^{(\alpha+1) |\setv|} \nonumber\\
  &\qquad\qquad\qquad\qquad
  \times\int_{[0,1]^s} (\partial^{(\bstau_{\setu\setminus\setv}, \bsalpha_\setv,\bszero)}_\bsx F)(\bsx)
  \prod_{j\in\setv} \widetilde{b}_{\alpha}(x_j-y_j) \,\rd \bsx\;. 
\end{align}
To see this, consider the set $\mathcal{P}$ of all polynomials defined on
$[0,1]^s$. For these functions \eqref{eq_anova} and \eqref{eq_repre} hold.
The set $\mathcal{P}$ is dense in $\calW_{s,\alpha,\bsgamma,q,r}$. Let $F
\in \calW_{s,\alpha,\bsgamma,q,r}$. Since $\mathcal{P}$ is dense, there
exists a sequence of functions $(F_n)_{n\ge 1}$ in $\mathcal{P}$ such that
$\|F_n- F\|_{s,\alpha,\bsgamma,q,r} \to 0$ as $n \to \infty$. Since the
set $\calW_{s,\alpha,\bsgamma,q,r}$ is complete with respect to the norm
$\|\cdot\|_{s,\alpha,\bsgamma,q,r}$, $(F_n)$ is a Cauchy sequence. It
follows that $(\partial^{(\bstau_{\setu\setminus\setv},
\bsalpha_\setv,\bszero)}_\bsx F_n)(\bsx)$ is a Cauchy sequence in
$L_q$. Let the limit of these sequences be denoted by
$(\partial^{(\bstau_{\setu\setminus\setv}, \bsalpha_\setv,\bszero)}_\bsx
\widetilde{F})(\bsx)$ and let $\widetilde{F}$ be defined via
\eqref{eq_anova} and \eqref{eq_repre}. Then $\|F -
\widetilde{F}\|_{s,\alpha,\bsgamma,q,r} \le \|F -
F_n\|_{s,\alpha,\bsgamma,q,r} + \| F_n - \widetilde{F}
\|_{s,\alpha,\bsgamma,q,r}$. Then $\| F - F_n\|_{s,\alpha,\bsgamma,q,r}\to 0$ by the definition of
$F_n$ and $\| F_n- \widetilde{F} \|_{s,\alpha,\bsgamma,q,r}\to 0$ by the
definition of $(\partial^{(\bstau_{\setu\setminus\setv},
\bsalpha_\setv,\bszero)}_\bsx \widetilde{F})(\bsx)$. Therefore the claim
is shown.

Note that \eqref{eq_anova} and \eqref{eq_repre} together is the
\emph{ANOVA decomposition} of $F$, since for any nonempty $\setu$ we have
$\int_0^1 F_\setu(\bsy_\setu)\,\rd y_j = 0$ whenever $j\in\setu$. This
follows from the property that $\int_0^1 b_\tau(y) \,\rd y = 0$ for all
$\tau \ge 1$. Moreover, we have
\begin{align*}
  &\|F_\setu\|_{s,\alpha,\bsgamma,q,r} \\
  &\,=\, \gamma_\setu^{-1}
  \Bigg(\sum_{\setv\subseteq\setu} \sum_{\bstau_{\setu\setminus\setv} \in \{1:\alpha\}^{|\setu\setminus\setv|}}
  \int_{[0,1]^{|\setv|}} \bigg|\int_{[0,1]^{s-|\setv|}}\!\!
  (\partial^{(\bstau_{\setu\setminus\setv}, \bsalpha_\setv,\bszero)}_\bsy F_\setu)(\bsy)
  \,\rd \bsy_{\{1:s\}\setminus\setv} \bigg|^q
  \rd \bsy_\setv\Bigg)^{1/q}.
\end{align*}
Thus we have the norm decomposition
$ 
  \|F\|_{s,\alpha,\bsgamma,q,r}
  \,=\, (
  \sum_{\setu \subseteq \{1:s\}} \|F_\setu\|_{s,\alpha,\bsgamma,q,r}^r 
  )^{1/r}.
$ 

Let $\widehat{F}(\bsk)$ denote the $\bsk$th Walsh coefficient of $F$ and
$\widehat{F_\setu}(\bsk_\setu)$ denote the $\bsk_\setu$th Walsh
coefficient of $F_\setu$. (We refer to \cite{D09, DiPi10} and the
references there for more information on Walsh function expansions.) Then
\begin{equation*}
  F(\bsy) \,=\, \sum_{\bsk \in \mathbb{N}_0^s} \widehat{F}(\bsk) \wal_{\bsk}(\bsy)
  \,=\, \sum_{\setu\subseteq \{1:s\}}
  \sum_{\bsk_\setu \in \mathbb{N}^{|\setu|}} \widehat{F_\setu}(\bsk_\setu) \wal_{\bsk_\setu}(\bsy_\setu)
  \,=\, \sum_{\setu \subseteq \{1:s\}} F_\setu(\bsy_\setu)\;.
\end{equation*}
Note that $\int_0^1 b_\tau(y) \,\rd y = 0$ for all $\tau \ge 1$ and
$\int_0^1 \wal_k(y) \,\rd y = 0$ for all $k \ge 1$. Thus
\begin{align*}
  \widehat{F}(\bsk_\setu, \boldsymbol{0})
  \,=\, \int_{[0,1]^s} F(\bsy)\, \overline{\wal_{\bsk_\setu}(\bsy_\setu) } \,\rd \bsy_\setu
  \,=\, \int_{[0,1]^{|\setu|}} F_\setu(\bsy_\setu)\, \overline{\wal_{\bsk_\setu}(\bsy_\setu) } \,\rd \bsy_\setu
  \,=\, \widehat{F_\setu}(\bsk_\setu)\;.
\end{align*}
{}From the 
character property of digital nets, see, e.g.,
\cite[Lemma~4.75]{DiPi10}, we obtain
\begin{align} \label{eq_error_bound}
  \left| \frac{1}{b^m} \sum_{n=0}^{b^m-1} F(\bsy_n) - \int_{[0,1]^s} F(\bsy) \,\mathrm{d} \bsy \right|
  \,\le\,
  \sum_{\emptyset\neq \setu \subseteq\{1:s\}} \sum_{\bsk_\setu \in \setD_\setu^*}
 |\widehat{F}(\bsk_\setu, \boldsymbol{0})|\;.
\end{align}

We now explain how to obtain a bound on the Walsh coefficients
$\widehat{F}(\bsk_\setu, \boldsymbol{0})$. For $s=1$, \cite[Theorem~14]{D09} states that for $F \in
\calW_{1,\alpha,\bsgamma,q,r}$ and $k\in\bbN$ we have
\begin{align}
  |\widehat{F}(k)|
  &\,\le\, \sum_{z=\rho}^\alpha \left|\int_0^1 F^{(z)}(x) \,\rd x \right| \; \frac{b^{-\mu_{z,{\rm per}}(k)}}{(2\sin\frac{\pi}{b})^z} \;
  \left(1 + \frac{1}{b} + \frac{1}{b(b+1)}\right)^{\max(0,z-2)} \label{equ:sumz}\\
  &\quad + \int_0^1 |F^{(\alpha)}(x)| \,\rd x \; \frac{2  b^{-\mu_{\alpha, {\rm per}}(k)}}{(2\sin\frac{\pi}{b})^\alpha}
  \left(1 + \frac{1}{b} + \frac{1}{b(b+1)}\right)^{\alpha - 2}
  \left(3 + \frac{2}{b} + \frac{2b+1}{b-1}\right)\; \nonumber,
\end{align}
where $\rho = 0$ if $k=0$, otherwise $\rho$ is given by the expansion of
$k$ in \eqref{def-mu-k}, and where
\[
 \mu_{z, {\rm per}}(k)
 \,:=\,
 \begin{cases}
 0 & \mbox{for } z = 0 \mbox{ and } k \ge 0, \\
 0 & \mbox{for } k =  0 \mbox{ and } z \ge 0, \\
 a_1 + \cdots + a_\rho + (z- \rho) a_\rho & \mbox{for } 1 \le \rho < z, \\
 a_1 + \cdots + a_{\rho} & \mbox{for } \rho \ge z.
 \end{cases}
\]
Moreover, for $\rho > \alpha$ the empty sum \eqref{equ:sumz} is defined as $0$.
Note that for $k \in \mathbb{N}$ with $\rho$ nonzero
digits in its base $b$ expansion, we have for $z \ge \rho$ that
$\mu_{z, {\rm per}}(k) \ge \mu_{\alpha}(k)$ and therefore
$b^{-\mu_{z, {\rm per}}(k) } \le b^{-\mu_{\alpha}(k)}$. Some further estimates yield
\[
  |\widehat{F}(k)|
  \,\le\, C_{\alpha, b}\, b^{-\mu_\alpha(k)} \gamma_{\{1\}}\,
  \|F_{\{1\}} \|_{1,\alpha,\bsgamma,q,r}\;.
\]
As mentioned in \cite[Remark~15]{D09}, this bound can be extended to $s >
1$. In this case one uses the representation \eqref{eq_repre}. Since the
proof of \cite[Theorem~14]{D09} uses bounds on the Walsh coefficients of
the Bernoulli polynomials, which appear in \eqref{eq_repre} in product
form, we obtain a bound of the following product form:
\begin{equation*}
  |\widehat{F}(\bsk_\setu,\boldsymbol{0})|
  \,\le\, C_{\alpha, b}^{|\setu|} \, b^{-\mu_\alpha(\bsk_\setu)} \gamma_\setu\,
  \|F_\setu \|_{s,\alpha,\bsgamma,q,r}\;.
\end{equation*}
Substituting this into \eqref{eq_error_bound} gives
\begin{align*}
  &\left| \frac{1}{b^m} \sum_{n=0}^{b^m-1} F(\bsy_n) - \int_{[0,1]^s} F(\bsy) \,\rd \bsy \right|
  \,\le\,
  \sum_{\emptyset\neq \setu \subseteq\{1:s\}}
  \|F_\setu\|_{s,\alpha,\bsgamma,q,r}\,C_{\alpha,b}^{|\setu|}\, \gamma_\setu
  \sum_{\bsk_\setu \in \setD_\setu^*}b^{-\mu_\alpha(\bsk_\setu)} \\
  &\qquad\,\le\,
  \Bigg(\sum_{\setu \subseteq\{1:s\}}
  \|F_\setu\|_{s,\alpha,\bsgamma,q,r}^r\Bigg)^{1/r}
  \Bigg(\sum_{\emptyset\neq \setu \subseteq\{1:s\}}
  \bigg(C_{\alpha,b}^{|\setu|}\, \gamma_\setu
  \sum_{\bsk_\setu \in \setD_\setu^*}b^{-\mu_\alpha(\bsk_\setu)}\bigg)^{r'}
  \Bigg)^{1/r'} \;,
\end{align*}
which yields the worst case error bound in the theorem. \qquad\end{proof}
%
\begin{remark}
Theorem~\ref{thm:wce} also holds for any digitally shifted digital
net with any digital shift. To define the digital shift, consider $y, \sigma \in [0,1)$ with $b$-adic expansion
$y= y_1 b^{-1} + y_2 b^{-2} + \cdots$ and $\sigma = \sigma_1 b^{-1} +
\sigma_2 b^{-2} + \cdots$ with the assumption that infinitely many digits
are different from $b-1$. Then we set $y \oplus \sigma = z_1 b^{-1} + z_2
b^{-2} + \cdots$ where $z_i = y_i + \sigma_i \pmod{b}$. For vectors $\bsy,
\bssigma \in [0,1)^s$ we define $\bsy \oplus \bssigma$ component-wise. If
$\bsy_0, \bsy_1,\ldots, \bsy_{b^m-1}$ is a digital net, then we call
$\bsy_0 \oplus \bssigma, \bsy_1 \oplus \bssigma, \ldots, \bsy_{b^m-1}
\oplus \bssigma$ a digitally shifted digital net with digital shift
$\bssigma \in [0,1)^s$.

If we replace the points $\bsy_n$ for $0 \le n < b^m$ in
Theorem~\ref{thm:wce} by $\bsy_n \oplus \bssigma$ for $0 \le n < b^m$ for
some arbitrary vector $\bssigma \in [0,1)^s$, then the statement of the
theorem still holds. The only change in the proof is in
\eqref{eq_error_bound}.
There we have
\begin{align} \label{eq_error_bound2}
  &\left| \frac{1}{b^m} \sum_{n=0}^{b^m-1}
  F(\bsy_n \oplus \bssigma) - \int_{[0,1]^s} F(\bsy) \,\mathrm{d} \bsy \right|
  \,\le\,
  \left|  \sum_{\emptyset\neq \setu \subseteq\{1:s\}} \sum_{\bsk_\setu \in \setD_\setu^*}
  \widehat{F}(\bsk_\setu, \boldsymbol{0}) \wal_{(\bsk_\setu, \boldsymbol{0})}(\bssigma) \right|\; \nonumber
  \\
  &\,\le \, \sum_{\emptyset\neq \setu \subseteq\{1:s\}} \sum_{\bsk_\setu \in \setD_\setu^*}
  |\widehat{F}(\bsk_\setu, \boldsymbol{0})|\; |\wal_{(\bsk_\setu, \boldsymbol{0})}(\bssigma)| \;.
\end{align}
Since $|\wal_{(\bsk_\setu, \boldsymbol{0})}(\bssigma)| =1$,
\eqref{eq_error_bound2} coincides with the right-hand side of
\eqref{eq_error_bound}. The remaining part of the proof stays unchanged.
\end{remark}

Note that we require $\alpha > 1$ in Theorem \ref{thm:wce}
to ensure the
convergence of the expression $\sum_{\bsk_\setu \in\setD^*_\setu}
b^{-\mu_\alpha(\bsk_\setu)}$. This expression does not converge for
$\alpha = 1$.

The quantity $e_{s,\alpha,\bsgamma,r'}(\calS)$ can be used as an error
criterion to obtain good digital nets for our function space setting. It
is a generalization of the criterion used in \cite{BDGP11, BDLNP12}: there
the weights are of a product form $\gamma_\setu = \prod_{j\in\setu}
\gamma_j$ for some nonnegative sequence $(\gamma_j)_{j\ge 1}$, whereas
here the weights take a general form.

We stress that this quantity $e_{s,\alpha,\bsgamma,r'}(\calS)$ does not
depend on the parameter $q$. Furthermore, it is more convenient to work
with an upper bound which can be obtained by taking $r = \infty$ and
$r'=1$, i.e., $e_{s,\alpha,\bsgamma,r'}(\calS) \le
e_{s,\alpha,\bsgamma,1}(\calS)$. On the other hand, for any $q$ and $r$ we
have $\|F\|_{s,\alpha,\bsgamma,q,r} \ge
\|F\|_{s,\alpha,\bsgamma,q,\infty}$. Hence by restricting ourselves to the
case $r=\infty$, we are working with the larger quantity
$e_{s,\alpha,\bsgamma,1}(\calS)$, but we benefit from having a smaller
norm $\|F\|_{s,\alpha,\bsgamma,q,\infty}$. This is the main reason why we
are able to obtain an improved convergence rate compared with other
papers.

\textbf{SPOD weights.} For a function $F$ satisfying \eqref{eq:like-norm},
its norm, with $r=\infty$ and any $q$, can be bounded~by
\begin{align*}
 \|F\|_{s,\alpha,\bsgamma,q,\infty}
 &\,\le\, c
 \max_{\setu\subseteq\{1:s\}}  \gamma_\setu^{-1}
 \sum_{\setv\subseteq\setu} \sum_{\bstau_{\setu\setminus\setv} \in \{1:\alpha\}^{|\setu\setminus\setv|}}\!
 |(\bsalpha_\setv,\bstau_{\setu\setminus\setv},\bszero)|!\,
 \bsbeta_s^{(\bsalpha_\setv,\bstau_{\setu\setminus\setv},\bszero)}
 \\
 &\,=\, c
 \max_{\setu\subseteq\{1:s\}}
 \gamma_\setu^{-1}
 \sum_{\bsnu_\setu \in \{1:\alpha\}^{|\setu|}}
 |\bsnu_\setu|!\,\prod_{j\in\setu} \left(2^{\delta(\nu_j,\alpha)}\beta_j^{\nu_j}\right)\;,
\end{align*}
where $\delta(\nu_j,\alpha)$ is $1$ if $\nu_j=\alpha$ and is $0$
otherwise.
To make $\|F\|_{s,\alpha,\bsgamma,q,\infty} \le c$, we choose
\begin{equation} \label{eq_defSPOD}
 \gamma_\setu
 \,:=\,
 \sum_{\bsnu_\setu \in \{1:\alpha\}^{|\setu|}}
 |\bsnu_\setu|!\,\prod_{j\in\setu} \left(2^{\delta(\nu_j,\alpha)}\beta_j^{\nu_j}\right)\;.
\end{equation}
We shall refer to our new form of weights \eqref{eq_defSPOD} as
``\emph{smoothness-driven product and order dependent weights}", or
``\emph{SPOD weights}'' for short. This new form of weights has similar
characteristics to POD weights---product and order dependent weights,
which were first introduced in \cite{KSS12} in the analysis of QMC methods
for PDEs with random coefficients.

\textbf{Product weights.} In a similar way, we deduce that for a function
$F$ satisfying \eqref{eq:like-norm2} we should choose
\begin{equation} \label{eq_defPROD}
 \gamma_\setu
 \,:=\,
 \sum_{\bsnu_\setu \in \{1:\alpha\}^{|\setu|}}
 \bsnu_\setu!\,\prod_{j\in\setu} \left(2^{\delta(\nu_j,\alpha)}\beta_j^{\nu_j}\right)
 \,=\, \prod_{j\in\setu}
 \sum_{\nu=1}^\alpha \left(\nu!\,2^{\delta(\nu,\alpha)}\beta_j^{\nu}\right)
 \;,
\end{equation}
which is of product form.

Since the quantity $e_{s,\alpha,\bsgamma,r'}(\calS)$ generalizes the
criterion used in \cite{BDGP11, BDLNP12}, the results in \cite{BDGP11,
BDLNP12} can potentially be adapted to show that a ``higher order
polynomial lattice rule'' (a kind of digital net), can be
constructed using a component-by-component (CBC) algorithm to achieve the
convergence rate, with $N=b^m$,
\begin{equation} \label{B-rate}
  e_{s,\alpha,\bsgamma,r'}(\calS) \,=\, \calO(N^{-\tau})
  \quad\mbox{for any}\quad \tau\in [1,\alpha)\;,
\end{equation}
where the implied constant depends
on $\tau$, $\bsgamma$ and $s$. This approach works for product
weights, but for general weights this should be understood only as an
\emph{existence} result, since the CBC construction for general weights is
prohibitively expensive. Furthermore, the construction for such a rule of
order $\alpha$ has a cost which scales with $N^\alpha$, see \cite{BDLNP12}, making it harder
to obtain higher order rules. We do not take this approach, but use an
analogous approach to \cite{Go13,DiGo12}.

\subsection{Interlaced polynomial lattice rules}
\label{sec:PolLattDims}

In this subsection we formally introduce (interlaced) polynomial lattice
rules. Polynomial lattice rules were first introduced by Niederreiter, see
\cite{Nie92}. In the following let $b$ be a prime number, $\Z_b$ be the finite field with $b$ elements, $\Z_b[x]$ be the set of all polynomials with coefficients in $\Z_b$ and $\Z_b((x^{-1}))$ be the set of all formal Laurent series $\sum_{\ell = w}^\infty t_\ell x^{-\ell}$, where $w$ is an arbitrary integer and $t_\ell \in \Z_b$ for all $\ell$.

\begin{definition}[Polynomial lattice rules] \label{def_poly_lat}
For a prime $b$ and any $m \in \bbN$, let $P \in \Z_b[x]$ be an
irreducible polynomial with $\deg(P)= m$; this is known as the
\emph{modulus}. For a given dimension $s\geq 1$, select $s$ polynomials
$q_1(x),\ldots,q_s(x)$ from the set
\begin{equation}\label{eq:Gbm}
   \Gc_{b, m} \,:=\, \{q(x) \in \Z_b[x] \setminus \{0\} \,:\, \text{deg}(q) < m\}\;,
\end{equation}
and write collectively
\begin{equation}\label{eq:GenVecq}
 \bsq \,=\, \bsq(x) \,=\, (q_1(x),\ldots,q_s(x)) \in
\Gc^s_{b, m}  \;;
\end{equation}
this is known as the \emph{generating vector}. For each integer $0 \le n <
b^m$, let $n = \eta_0 + \eta_1 b + \cdots + \eta_{m-1} b^{m-1}$ be the
$b$-adic expansion of $n$, and we associate with $n$ the polynomial
\[
  n(x) = \sum_{r=0}^{m-1} \eta_r \, x^r  \in \Z_b[x]\;.
\]
Furthermore, we denote by $v_{m}$ the map from $\Z_b((x^{-1}))$ to the
interval $[0,1)$ defined for any integer $w$ by
\[
  v_{m}\left( \sum_{\ell=w}^\infty t_\ell\, x^{-\ell} \right) =
  \sum_{\ell=\max(1,w)}^m t_\ell \, b^{-\ell}\;.
\]
Then, the QMC point set $\calS_{P,b,m,s}(\bsq)$ of a (classical)
\emph{polynomial lattice rule} comprises the points
\[
  \bsy_n = \left( v_{m} \left( \frac{n(x) q_1(x)}{P(x)} \right),\ldots,
    v_{m} \left( \frac{n(x) q_s(x)}{P(x)} \right) \right) \in [0,1)^s,
    \quad n = 0, \ldots, b^m-1\;.
\]
\end{definition}

In the following we define interlaced polynomial lattice rules \cite{Go13,DiGo12},
belonging to the family of higher order digital nets, which were first
introduced in \cite{D07,D08}.

\begin{definition}[Interlaced polynomial lattice rules] \label{int-poly}
Define the \emph{digit interlacing function}
with interlacing factor $\alpha \in \mathbb{N}$ by
\begin{equation}\label{eq:DigIntl}
\begin{array}{rcl}
\mathscr{D}_\alpha: [0,1)^{\alpha} & \to & [0,1)
\\
(x_1,\ldots, x_{\alpha}) &\mapsto & \sum_{a=1}^\infty \sum_{j=1}^\alpha
\xi_{j,a} b^{-j - (a-1) \alpha}\;,
\end{array}
\end{equation}
where $x_j = \xi_{j,1} b^{-1} + \xi_{j,2} b^{-2} + \cdots$ for $1 \le j
\le \alpha$. We also define such a function for vectors by setting
\begin{equation} \label{eq:DigIntlAr}
\begin{array}{rcl}
\mathscr{D}_\alpha: [0,1)^{\alpha s} & \to & [0,1)^s
\\
(x_1,\ldots, x_{\alpha s})
&\mapsto &
(\mathscr{D}_\alpha(x_1,\ldots, x_\alpha),  \ldots,
\mathscr{D}_\alpha(x_{(s-1)\alpha +1},\ldots, x_{s \alpha}))
\;.
\end{array}
\end{equation}
Then, an \emph{interlaced polynomial lattice rule of order $\alpha$ with
$b^m$ points in $s$ dimensions} is a QMC rule using
$\mathscr{D}_\alpha(\calS_{P,b,m,\alpha s}(\bsq))$ as quadrature points,
for some given modulus $P$ and generating vector
$\bsq \in \Gc^{\alpha s}_{b, m}$.
\end{definition}

Note that the generating vector $\bsq$ is of length $\alpha\, s$, and it
can be interpreted as the concatenation of $\alpha$ different generating
vectors of (classical) polynomial lattice rules in $s$ dimensions.
Interlaced scrambled polynomial lattice rules were first used in
\cite{DiGo12} and interlaced polynomial lattice rules were first used in
\cite{Go13}.

An illustration of how interlacing works is as follows: if we have
$\alpha$ numbers in base $b$ representation
$
  (0.\,\xi_{1,1}\xi_{1,2}\xi_{1,3}\cdots)_b\;, 
  (0.\,\xi_{2,1}\xi_{2,2}\xi_{2,3}\cdots)_b\;, 
  \ldots
  \;,
  (0.\,\xi_{\alpha,1}\xi_{\alpha,2}\xi_{\alpha,3}\cdots)_b\;,
$
then the result of interlacing is $
  (0.\,\xi_{1,1}\xi_{2,1}\cdots\xi_{\alpha,1}\,
       \xi_{1,2}\xi_{2,2}\cdots\xi_{\alpha,2}\,
       \xi_{1,3}\xi_{2,3}\cdots\xi_{\alpha,3}\cdots)_b\;,
$ 
that is, we take the first digit of all the $\alpha$ numbers, followed
by the second digit of all the $\alpha$ numbers, and then the third digit,
and so on.

\begin{remark}[Higher order polynomial lattice rules]
Higher order polynomial lattice rules were first introduced in
\cite{DiPi07}. It is important to note that they are \emph{not} obtained
by interlacing (classical) polynomial lattice rules. A higher-order
polynomial lattice rule of order $\alpha$ differs from
Definition~\ref{def_poly_lat} in two ways: the modulus $P$ has a higher
degree $\deg(P) = \alpha\,m$, and the set of polynomials \eqref{eq:Gbm}
now include all polynomials with $\deg(q)<\alpha\,m$. An interlaced
polynomial lattice rule as defined in Definition~\ref{int-poly} is
\emph{not} a higher order polynomial lattice rule, however, they still belong to the family of higher order digital nets and therefore achieve a higher order of convergence of the integration error of smooth functions, which is the property used in this paper.
\end{remark}

To proceed with our analysis, we note that the worst case error bound in
Theorem~\ref{thm:wce} holds for interlaced polynomial lattice rules, but
the dual net \eqref{dual} is expressed in terms of the generating
matrices, which is inconvenient for our analysis or computation. We now
derive an alternative expression for $e_{s,\alpha,\bsgamma,r'}(\calS)$ in
\eqref{def-B} which is expressed in terms of the generating polynomials.

We start by extending the definition of the interlacing function
$\mathscr{D}_\alpha$ to nonnegative integers by setting
\begin{equation}\label{eq_DefInterl}
\begin{array}{rcl}
 \mathscr{E}_\alpha: \mathbb{N}^{\alpha}_0 & \to & \mathbb{N}_0 \\
 (\ell_1,\ldots, \ell_{\alpha}) & \mapsto &
 \sum_{a=0}^\infty \sum_{j=1}^\alpha l_{j,a}\, b^{j -1 + a \alpha}
\;,
\end{array}
\end{equation}
where $\ell_j = l_{j,0} + l_{j,1} b + l_{j,2} b^2 + \cdots$ for $1 \le j
\le \alpha$. We also extend this function to vectors via
\begin{equation}\label{eq_DefInterlV}
\begin{array}{rcl}
\mathscr{E}_\alpha: \mathbb{N}^{\alpha s}_0 & \to & \mathbb{N}^s_0
\\
(\ell_1,\ldots, \ell_{\alpha s}) & \mapsto &
(\mathscr{E}_\alpha(\ell_1,\ldots, \ell_\alpha), \ldots,
\mathscr{E}_\alpha(\ell_{\alpha (s-1)+1}, \ldots, \ell_{\alpha s}))
\;.
\end{array}
\end{equation}
For a given set $\emptyset\ne\setv\subseteq\{1:\alpha s\}$, we define
\begin{equation} \label{u_of_v}
  \setu(\setv) \,:=\, \{\lceil j/\alpha \rceil: j \in \setv\} \,\subseteq\, \{1:s\}\;,
\end{equation}
where each element appears only once as is typical for sets. The set
$\setu(\setv)$ can be viewed as an indicator on whether the set $\setv$
includes any element from each block of $\alpha$ components from
$\{1:\alpha s\}$.

The dual net can be obtained by interlacing the dual nets corresponding to the
$\alpha$ different (classical) polynomial lattice rules. By
\cite[Lemma~10.6]{DiPi10} and the definition \eqref{eq_DefInterlV} of the
interlacing function, we can rewrite \eqref{def-B} for an interlaced
polynomial lattice rule with $r'=1$ as
\begin{equation} \label{eq:error-1}
  e_{s,\alpha,\bsgamma,1}(\calS)
  \, = \,
  \sum_{\emptyset\neq \setv \subseteq\{1:\alpha s\}}
  C_{\alpha,b}^{|\setu(\setv)|}\, \gamma_{\setu(\setv)}
  \sum_{\bsell_\setv \in \calD_\setv^*}b^{-\mu_\alpha(\mathscr{E}_\alpha(\bsell_{\setv},\bszero))}
  \;,
\end{equation}
where $(\bsell_\setv,\bszero)$ denotes a vector of length $\alpha s$ whose
$j$th component is $\ell_j$ if $j\in\setv$ and $0$ if $j\notin\setv$, and
the ``dual net without $0$ components''
is now defined in terms of the generating polynomials as
\begin{equation} \label{dual-poly}
  \calD_\setv^*
  \,:=\,
  \{\bsell_\setv \in \mathbb{N}^{|\setv|}: \tru_m (\bsell_\setv) \cdot \bsq_\setv \equiv 0 \pmod{P} \}\;.
\end{equation}
The ``inner-product'' in \eqref{dual-poly} denotes
\[
  \tru_m(\bsk_\setv) \cdot \bsq_\setv
  \,=\, \sum_{j\in\setv} \tru_m(k_j)(x)\, q_j(x) \,\in\, \Z_b[x]\;,
\]
with the polynomial $\tru_m(k)(x)$ defined as follows: to any nonnegative
integer $k$ with $b$-adic expansion $k = \kappa_0 + \kappa_1 b + \cdots +
\kappa_{\rho-1} b^{\rho-1}$, we associate a unique polynomial $k(x) =
\kappa_0 + \kappa_1 x + \cdots + \kappa_{\rho-1} x^{\rho-1}$ and its
truncated version
\[
  \tru_m(k)(x) \,=\, \kappa_0 + \kappa_1 x + \cdots + \kappa_{m-1} x^{m-1}\;,
\]
which is obtained by setting $\kappa_\rho = \cdots = \kappa_{m-1} = 0$ if
$\rho<m$.

\subsection{Component-by-component construction}
\label{cbc-constr}

In this subsection we introduce and analyze a component-by-component (CBC)
construction of interlaced polynomial lattice rules. The expression
\eqref{eq:error-1} could be used as our search criterion. However, to
reduce the computational cost from a scaling of $N^\alpha$ to $\alpha N$,
we shall instead work with an upper bound to \eqref{eq:error-1}, which is based on the following lemma.

\begin{lemma}
For any $z_1,\ldots,z_\alpha\in\bbN_0$ we have
\begin{align}\label{eq:mualpha-to-mu1s}
  \mu_\alpha(\mathscr{E}_\alpha(z_1, \ldots, z_\alpha))
  \,\ge\,
  \alpha \sum_{j=1}^\alpha \mu_1(z_j) - \frac{\alpha(\alpha-1)}{2}\;.
\end{align}
\end{lemma}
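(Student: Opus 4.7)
The plan is to track the digits of $K := \mathscr{E}_\alpha(z_1,\ldots,z_\alpha)$ by unravelling the interlacing map \eqref{eq_DefInterl}. Setting $a_j := \mu_1(z_j)$ (so $a_j = 0$ iff $z_j = 0$), the leading $b$-adic digit of each nonzero $z_j$ sits at zero-indexed position $a_j - 1$; by \eqref{eq_DefInterl} this digit becomes the coefficient of $b^{j-1+(a_j-1)\alpha}$ in $K$, i.e.\ it appears at the one-indexed position
\[
A_j \,:=\, j + (a_j - 1)\alpha
\]
of the expansion used in \eqref{def-mu-k}. The key combinatorial observation is that, as $j$ varies over $T' := \{j : z_j \ne 0\}$, the numbers $A_j$ are pairwise distinct, because $A_j \equiv j \pmod{\alpha}$ and the indices $j \in \{1,\ldots,\alpha\}$ supply pairwise distinct residues mod $\alpha$.

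Since $\mu_\alpha(K)$ is by definition the sum of the $\min(\alpha,\rho)$ \emph{largest} one-indexed positions of nonzero digits of $K$, and since $\{A_j : j \in T'\}$ already provides $t := |T'| \le \alpha$ such distinct positions, we obtain the lower bound
\begin{equation*}
\mu_\alpha(K) \,\ge\, \sum_{j \in T'} A_j \,=\, \sum_{j \in T'} j \,+\, \alpha \sum_{j=1}^\alpha a_j \,-\, \alpha\, t,
\end{equation*}
using $a_j = 0$ for $j \notin T'$ in the last equality.

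It remains to show $\sum_{j \in T'} j - \alpha t \ge -\alpha(\alpha-1)/2$. Since $T' \subseteq \{1,\ldots,\alpha\}$ has cardinality $t$, one has $\sum_{j \in T'} j \ge t(t+1)/2$, and the desired bound reduces to the polynomial inequality
\[
t^2 - (2\alpha-1)\, t + \alpha(\alpha-1) \,=\, (t - \alpha)\bigl(t - (\alpha-1)\bigr) \,\ge\, 0,
\]
which holds for every integer $t \in \{0,1,\ldots,\alpha\}$ because its two factors are consecutive integers (hence of the same sign or one is zero). The argument is essentially combinatorial; the only point requiring care is the translation between the zero-indexed digit enumeration of \eqref{eq_DefInterl} and the one-indexed enumeration of positions in \eqref{def-mu-k}, and I do not foresee any real obstacle beyond this bookkeeping.
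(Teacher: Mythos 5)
Your proof is correct and follows essentially the same route as the paper: both arguments isolate the leading $b$-adic digit of each nonzero $z_j$, observe that interlacing places it at the position $j+(\mu_1(z_j)-1)\alpha$ (distinct across $j$ by the residues mod $\alpha$), and bound $\mu_\alpha$ of the interlaced integer from below by the sum of these positions. Your closing step via the factorization $(t-\alpha)(t-\alpha+1)\ge 0$ is just a slightly more roundabout version of the paper's direct observation that $\sum_{j\in T'}(\alpha-j)\le\sum_{j=1}^{\alpha}(\alpha-j)=\alpha(\alpha-1)/2$ because every term $\alpha-j$ is nonnegative.
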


\begin{proof}
For $\alpha = 1$ we obviously have equality. For $\alpha \ge 2$, we define
$z'_j$ by retaining only the most significant base~$b$ digit in~$z_j$, and
we set $z'_j=0$ if $z_j = 0$. Then we have $\mu_1(z'_j) = \mu_1(z_j)$ and
$z'_j \ne 0 \Leftrightarrow z_j \ne 0$, and thus
\begin{align*}
  \mu_\alpha(\mathscr{E}_\alpha(z_1, \ldots, z_\alpha))
  &\,\ge\, \mu_\alpha(\mathscr{E}_\alpha(z'_1, \ldots, z'_\alpha))
  \,=\, \sum_{\substack{j=1 \\ z_j \ne 0}}^\alpha \left( \alpha \left(\mu_1(z_j) - 1 \right) + j \right) \\
  &\,=\,
    \alpha \sum_{j=1}^\alpha \mu_1(z_j)
    -
    \sum_{\substack{j=1 \\ z_j \ne 0}}^\alpha (\alpha - j)
    \,\ge\,
    \alpha \sum_{j=1}^\alpha \mu_1(z_j)
    -  \sum_{j=1}^\alpha (\alpha - j)\;,
\end{align*}
which proves the result. \qquad\end{proof}

We now apply \eqref{eq:mualpha-to-mu1s} to the vector
$(\bsell_\setv,\bszero)$ in blocks of $\alpha$ components, noting that
$\mu_\alpha(\mathscr{E}_\alpha(\bszero)) = 0$, to obtain
\[
  \mu_\alpha (\mathscr{E}_\alpha(\bsell_\setv,\bszero))
  \,\ge\, \alpha \mu_1 (\bsell_\setv) - \frac{\alpha(\alpha-1)}{2} |\setu(\setv)|\;.
\]
Substituting this into \eqref{eq:error-1} then yields the upper bound
\begin{equation} \label{eq:error-2}
  e_{s,\alpha,\bsgamma,1}(\calS)
  \,\le\,
  \sum_{\emptyset\neq \setv \subseteq\{1:\alpha s\}}
  C_{\alpha,b}^{|\setu(\setv)|}\, \gamma_{\setu(\setv)}\,b^{\alpha(\alpha-1)|\setu(\setv)|/2}
  \sum_{\bsell_\setv \in \calD_\setv^*}b^{-\alpha\mu_1(\bsell_\setv)}
  \;.
\end{equation}

We shall use the right-hand side of \eqref{eq:error-2} as our search
criterion in the CBC construction. To simplify our notation, we define
\begin{equation} \label{eq:def-E}
  E_d(\bsq) \,:=\,
  \sum_{\emptyset \neq \setv \subseteq \{1:d\}} \widetilde{\gamma}_{\setv}
  \sum_{\bsell_\setv \in \calD_\setv^*}
  b^{- \alpha \mu_1(\bsell_\setv)}
\;.
\end{equation}
In particular, we are interested in the case $d = \alpha s$ and weights
\begin{equation} \label{eq:weight2}
  \widetilde{\gamma}_{\setv} \,:=\,
  C_{\alpha,b}^{|\setu(\setv)|}\, \gamma_{\setu(\setv)}\,b^{\alpha(\alpha-1)|\setu(\setv)|/2}\;.
\end{equation}
However, the theorem below holds for any $d$ and general weights
$\widetilde{\gamma}_{\setv}$.

\begin{theorem}[CBC error bound]\label{thm_cbc}
Let $b \ge 2$ be prime, and $\alpha \ge 2$ and $m,d \ge 1$ be integers,
and let $P \in \Z_b[x]$ be an irreducible polynomial with $\deg(P) = m$.
Let $(\widetilde{\gamma}_{\setv})_{\setv\subseteq\{1:d\}}$ be positive
real numbers. Then a generating vector $\bsq^* = (1, q_2^*, 
\ldots,
q_d^*) \in \Gc^d_{b,m}$ can be constructed using a component-by-component
approach, minimizing $E_d(\bsq)$ in each step, such that
\begin{equation} \label{cond1}
 E_d(\bsq^*) \,\le\,
 \Bigg( \frac{2}{b^{m}-1}\sum_{\emptyset\ne\setv\subseteq\{1:d\}}
         \widetilde\gamma_{\setv}^{\lambda} \left(\frac{b-1}{b^{\alpha\lambda}-b}\right)^{|\setv|}
 \Bigg)^{1/\lambda}
 \quad\mbox{for all}\quad \lambda \in (1/\alpha, 1]
 \;.
\end{equation}
\end{theorem}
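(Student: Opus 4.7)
The plan is to proceed by induction on $d$ using the classical CBC-averaging argument adapted to polynomial lattice rules. The base case $q_1^* = 1$ is prescribed by the theorem statement. Inductively, suppose $q_1^*, \ldots, q_{d-1}^*$ have been constructed and $q_d^* \in \Gc_{b,m}$ is chosen to minimize $E_d(q_1^*, \ldots, q_{d-1}^*, q_d)$. The first key observation is that $E_d$ splits additively as
\[
  E_d(\bsq) \,=\, E_{d-1}(q_1,\ldots,q_{d-1}) \,+\, \theta_d(q_d),
\]
where $\theta_d$ collects precisely the contributions of those nonempty $\setv \subseteq \{1:d\}$ with $d \in \setv$, so $\theta_d$ is the only part depending on $q_d$. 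Since the minimum does not exceed the average, and using the subadditivity $(a+b)^\lambda \le a^\lambda + b^\lambda$ valid for $\lambda \in (0,1]$ and $a,b \ge 0$, one obtains
\[
  E_d(\bsq^*)^\lambda \,\le\, E_{d-1}(\bsq^*)^\lambda \,+\, \frac{1}{b^m-1}\sum_{q_d \in \Gc_{b,m}} \theta_d(q_d)^\lambda.
\]

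Next I would apply subadditivity once more inside $\theta_d^\lambda$ to push $\lambda$ past the outer sum over $\setv$ and the inner sum over $\bsell$, and then swap the average over $q_d$ with those summations. The crucial combinatorial input is to bound, for each fixed $\bsell$, the average of the indicator $\mathbb{1}[\bsell \in \calD^*_{\setv \cup \{d\}}]$ over $q_d \in \Gc_{b,m}$, which I would handle by splitting into two cases according to whether $P \mid \tru_m(\ell_d)$. When $P \nmid \tru_m(\ell_d)$, the dual-net congruence $\tru_m(\ell_d)\, q_d \equiv -\sum_{j \in \setv} \tru_m(\ell_j)\, q_j \pmod P$ admits at most one solution $q_d$ modulo $P$ (using that $P$ is irreducible of degree $m$ while $\deg \tru_m(\ell_d) < m$), contributing a factor $1/(b^m-1)$. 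In the degenerate case $P \mid \tru_m(\ell_d)$, which forces $\tru_m(\ell_d) = 0$ and hence $\ell_d = b^m k$ with $k \ge 1$, the indicator is independent of $q_d$; but summing the resulting factor $b^{-\alpha\lambda m}$ over such $\ell_d$ yields a contribution that is again bounded by $1/(b^m-1)$. Combining the two cases is exactly what produces the factor of $2$ in the bound. The residual geometric sum $\sum_{\ell \ge 1} b^{-\alpha\lambda\,\mu_1(\ell)} = (b-1)/(b^{\alpha\lambda}-b)$ converges precisely when $\alpha\lambda > 1$, which is the reason for the restriction $\lambda \in (1/\alpha, 1]$.

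Assembling these estimates produces the one-step recursion
\[
  E_d(\bsq^*)^\lambda \,\le\, E_{d-1}(\bsq^*)^\lambda \,+\, \frac{2}{b^m-1} \sum_{\setv \subseteq \{1:d-1\}} \widetilde\gamma_{\setv \cup \{d\}}^{\lambda} \left(\frac{b-1}{b^{\alpha\lambda}-b}\right)^{|\setv|+1}.
\]
Unrolling the induction and reindexing via $\setv' = \setv \cup \{d'\}$ with $d' = \max \setv'$ consolidates all contributions into a single sum over nonempty $\setv' \subseteq \{1:d\}$, and taking $\lambda$-th roots then yields the claimed bound. The main obstacle will be the averaging step: one has to separately track the degenerate contribution $P \mid \tru_m(\ell_d)$ in order to obtain the clean constant $2/(b^m-1)$ rather than a worse bound, and to verify that the exclusion $q_d \ne 0$ inherent in $\Gc_{b,m}$ does not spoil the count by an off-by-one factor. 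Everything else is bookkeeping.
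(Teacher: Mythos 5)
Your proposal is correct and follows essentially the same route as the paper's proof: induction on the dimension, the additive split $E_d = E_{d-1} + \theta_d$, the minimum-bounded-by-average argument combined with Jensen's inequality $(\sum_k a_k)^\lambda \le \sum_k a_k^\lambda$, and the case split on whether $b^m \mid \ell_d$ (equivalently $\tru_m(\ell_d)=0$) versus the at-most-one-solution count for the congruence modulo the irreducible $P$, which together yield the factor $2/(b^m-1)$. The only cosmetic difference is that you keep the recursion at the level of $\lambda$-th powers and take the $1/\lambda$-th root once at the end, whereas the paper takes the root at each step and uses superadditivity of $x\mapsto x^{1/\lambda}$; these are equivalent.
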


\begin{proof}
We prove the result by induction. For $d=1$, we calculate
\begin{align*}
 E_1(1)
 \,=\, \widetilde\gamma_{\{1\}} \sum_{\ell=1}^\infty b^{-\alpha \mu_1(\ell\, b^m)}
 \,=\, \widetilde\gamma_{\{1\}} \sum_{a=0}^\infty b^{-\alpha (m + a +1 )} (b-1) b^a
 &\,=\, \widetilde\gamma_{\{1\}} b^{-\alpha m} \frac{b-1}{b^\alpha -b}
\;.
\end{align*}
Thus the result holds for $d=1$.

Suppose now that \eqref{cond1} holds for some vector
$\bsq^* \in \Gc_{b,m}^d$ for some $d \geq 1$.
For $\bsk \in \mathbb{N}_0^d$ we denote the support of the
multi-index $\bsk$ by $\setv(\bsk) := \{1 \le j \le d: k_j > 0\}$. Then we
write \eqref{eq:def-E}, with $d$ replaced by $d+1$, in an equivalent
formulation
\begin{align}\label{cond2}
 E_{d+1}(\bsq^*, q_{d+1})
 &\,=\, \sum_{\satop{(\bsk,k_{d+1}) \in \N_0^{d+1}\setminus \{{\bf 0}\}}
 {\tru_m(\bsk,k_{d+1})\cdot (\bsq^*,q_{d+1}) \equiv 0 \pmod{P}}}
 \widetilde\gamma_{\setv(\bsk,k_{d+1})}\, b^{-\alpha \mu_1(\bsk,k_{d+1})}
 \nonumber \\
 &\,=\, E_d(\bsq^*) + \theta(q_{d+1})\;,
\end{align}
where we have separated out the $k_{d+1}=0$ terms so that
\[
  \theta(q_{d+1})
= \sum_{k_{d+1}=1}^\infty \Bigg( b^{-\alpha \mu_1(k_{d+1})}
    \sum_{\satop{\bsk \in \N_0^d}
                {\tru_m(\bsk)\cdot \bsq \equiv -\tru_m(k_{d+1}) \cdot q_{d+1} \pmod{P}}}
    \widetilde\gamma_{\setv(\bsk,k_{d+1})}\, b^{-\alpha \mu_1(\bsk)} \Bigg)\;.
\]
By the induction assumption, we may assume that a minimizer $\bsq^* =
(q^*_1,...,q^*_d)$ of $E_d(\bsq^*)$ in \eqref{cond2} has already been
determined. Then, the CBC algorithm chooses $q_{d+1}^*$ such that
$E_{d+1}(\bsq^*, q_{d+1})$ is minimized. By \eqref{cond2}, the only
dependency on $q_{d+1}$ of $ E_{d+1}(\bsq^*, q_{d+1})$ enters via
$\theta(q_{d+1})$. Therefore, we conclude
$\theta(q_{d+1}^*) \le \theta(q_{d+1})$ for all $q_{d+1} \in \Gc_{b,m}$,
which implies that for any $\lambda \in (0,1]$ we have
$[\theta(q_{d + 1}^*)]^{\lambda} \le [\theta(q_{d+1})]^{\lambda} $ for all $q_{d+1} \in \Gc_{b,m}$.
Since the number of elements in $\Gc_{b,m}$ is $b^m-1$ and since $[\theta(q_{d+1})]^{\lambda}$ is bounded by the average over all $[\theta(q_{d+1})]^{\lambda}$, we obtain
\begin{equation} \label{bound1}
    \theta(q_{d+1}^*)
\le \Bigg(\frac{1}{b^m-1} \sum_{q_{d+1} \in \Gc_{b,m}} [\theta(q_{d+1})]^{\lambda} \Bigg)^{1/\lambda}.
\end{equation}
We will obtain a bound on $\theta(q_{d+1}^*)$ through this last
inequality.

Let $\lambda \in (1/\alpha,1]$. It follows from Jensen's inequality\footnote{$(\sum_k a_k)^\lambda \le \sum_k a_k^\lambda$ for $0 < \lambda \le 1$ and $a_k \ge 0$.} that
\[
    [\theta(q_{d+1})]^{\lambda}
\,\le\, \sum_{k_{d+1}=1}^\infty b^{-\alpha\lambda\,\mu_1(k_{d+1})}
    \sum_{\satop{\bsk \in \N_0^d}
  {\tru_m(\bsk)\cdot \bsq \equiv -\tru_m(k_{d+1}) \cdot q_{d+1} \pmod{P}}}
  \widetilde\gamma_{\setv(\bsk,k_{d+1})}^{\lambda} b^{-\alpha\lambda\,\mu_1(\bsk)}.
\]
If $k_{d+1}$ is a multiple of $b^m$, then $\tru_m(k_{d+1}) = 0$ and the
corresponding term in the sum is independent of $q_{d+1}$. If $k_{d+1}$ is
not a multiple of $b^m$, then $\tru_m(k_{d+1})$ is a non-zero polynomial of degree less than $m$. Moreover, since $q_{d+1} \ne 0$ and $P$ is irreducible,
$\tru_m(k_{d+1}) \cdot q_{d+1}$ is never a multiple of $P$. Thus
\begin{align} \label{bound2}
&\frac{1}{b^m-1} \sum_{q_{d+1} \in \Gc_{b,m}} [\theta(q_{d+1})]^{\lambda}
\le \sum_{\satop{k_{d+1}=1}{b^m | k_{d+1}}}^\infty  b^{-\alpha\lambda\,\mu_1(k_{d+1})}
  \!\!\!\!\!\!  \sum_{\satop{\bsk \in \N_0^d}
  {\tru_m(\bsk)\cdot \bsq \equiv 0 \pmod{P}}} \!\!\!\!\!\!
  \widetilde\gamma_{\setv(\bsk,k_{d+1})}^{\lambda}\,
  b^{-\alpha\lambda\,\mu_1(\bsk)}
\nonumber\\
&\qquad\qquad\qquad + \frac{1}{b^m-1} \sum_{\satop{k_{d+1}=1}{b^m \nmid\, k_{d+1}}}^\infty  b^{-\alpha\lambda\,\mu_1(k_{d+1})}
    \sum_{\satop{\bsk \in \N_0^d}
  {\tru_m(\bsk)\cdot \bsq \not\equiv 0 \pmod{P}}} \widetilde\gamma_{\setv(\bsk,k_{d+1})}^{\lambda}\,
  b^{-\alpha\lambda\,\mu_1(\bsk)}
\nonumber\\
&\,\le\,
\left(b^{-\alpha\lambda m} + \frac{1}{b^m-1} \right)
\sum_{d+1 \in \setv \subseteq \{1:d+1\}}
\widetilde\gamma_\setv^{\lambda} \left(\frac{b-1}{b^{\alpha\lambda}-b} \right)^{|\setv|},
\end{align}
where we used the following estimates
\begin{align*}
&      \sum_{\satop{k_{d+1}=1}{b^m | k_{d+1}}}^\infty b^{-\alpha\lambda\,\mu_1(k_{d+1})}
\,=\,  b^{-\alpha\lambda (m+1)} \frac{b^{\alpha\lambda} (b-1)}{b^{\alpha\lambda} -b}, \quad
      \sum_{\satop{k_{d+1}=1}{b^m \nmid\, k_{d+1}}}^\infty b^{-\alpha\lambda\,\mu_1(k_{d+1})}
\,\le\, \frac{b-1}{b^{\alpha\lambda}-b}, \\
&\qquad\quad \sum_{\satop{\bsk \in \N_0^d}{\tru_m(\bsk)\cdot \bsq \not\equiv 0 \pmod{P}}}
      \widetilde\gamma^{\lambda}_{\setv(\bsk)}\, b^{-\alpha\lambda\,\mu_1(\bsk)}
\,\le\, \sum_{\setv \subseteq \{1:s\}} \widetilde\gamma_\setv^{\lambda}
\left(\frac{b-1}{b^{\alpha\lambda}-b}\right)^{|\setv|}.
\end{align*}
Hence we have from \eqref{bound1} and \eqref{bound2} that
\[
 \theta(q_{d+1}^*)
 \,\le\, \Bigg( \frac{2}{b^m-1} \sum_{d+1 \in \setv \subseteq \{1:d+1\}}
 \widetilde\gamma_{\setv}^{\lambda} \left(\frac{b-1}{b^{\alpha\lambda}-b} \right)^{|\setv|}
 \Bigg)^{1/\lambda}\;,
\]
which, together with \eqref{cond1} and \eqref{cond2}, yields the required
estimate for $E_{d+1}(\bsq^*,q_{d+1}^*)$, that is, \eqref{cond1} with $d$
replaced by $d+1$. This completes the proof. 
\qquad\end{proof}

\begin{theorem}[CBC error bound] \label{thm:wce2}
Let $\alpha,s\in\bbN$ with $\alpha>1$, $1\le q\le\infty$, 
and let $\bsgamma = (\gamma_\setu)_{\setu\subset\bbN}$ denote a collection
of weights. Let $b$ be prime and let $m\in\bbN$ be arbitrary. Then, an
interlaced polynomial lattice rule of order $\alpha$ with $N=b^m$ points
$\{\bsy_0,\ldots,\bsy_{n-1}\}\in [0,1]^s$ can be constructed using a
component-by-component algorithm, such that
\begin{align*}
  \left| \frac{1}{b^m} \sum_{n=0}^{b^m-1} F(\bsy_n) - \int_{[0,1]^s} F(\bsy) \,\rd \bsy\right|
  \le \!\!
  \left(\frac{2}{b^{m}-1} \sum_{\emptyset\ne\setu\subseteq{\{1:s\}}}
  \!\!\gamma_\setu^\lambda\, [\rho_{\alpha,b}(\lambda)]^{|\setu|}\right)^{\!\!1/\lambda}\!\!
  \|F\|_{s,\alpha,\bsgamma,q,\infty},
\end{align*}
for all $1/\alpha < \lambda \le 1$, where
\begin{equation}\label{eq:defrhoab}
  \rho_{\alpha,b}(\lambda) \,:=\,
  \left(C_{\alpha,b}\,b^{\alpha(\alpha-1)/2}\right)^\lambda
  \left(\left(1+\frac{b-1}{b^{\alpha\lambda}-b}\right)^\alpha-1\right)\;,
\end{equation}
with $C_{\alpha,b}$ defined by \eqref{eq:Cab}.
\end{theorem}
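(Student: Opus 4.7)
The plan is to chain together three ingredients that have already been prepared in the preceding subsections, and then perform one combinatorial identity to convert the bound from subsets of $\{1:\alpha s\}$ into subsets of $\{1:s\}$.

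\emph{Step 1 (reduction to a digital-net worst-case error).} Since an interlaced polynomial lattice rule of order $\alpha$ with $N=b^m$ points is a digital net (it is the image under $\mathscr{D}_\alpha$ of a classical polynomial lattice point set with $\alpha s$ coordinates), Theorem~\ref{thm:wce} applies with $r=\infty$ and $r'=1$, giving
\begin{equation*}
 \Bigl| \tfrac{1}{b^m}\sum_{n=0}^{b^m-1} F(\bsy_n) - \int_{[0,1]^s} F(\bsy)\,\rd\bsy \Bigr|
 \,\le\, e_{s,\alpha,\bsgamma,1}(\calS)\,\|F\|_{s,\alpha,\bsgamma,q,\infty}.
\end{equation*}
Then, using the alternative expression \eqref{eq:error-1} of $e_{s,\alpha,\bsgamma,1}(\calS)$ in terms of the dual net \eqref{dual-poly} together with the inequality \eqref{eq:mualpha-to-mu1s} yields the upper bound \eqref{eq:error-2}, which is exactly $E_{\alpha s}(\bsq)$ from \eqref{eq:def-E} with the SPOD-like weights $\widetilde\gamma_\setv$ defined in \eqref{eq:weight2}.

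\emph{Step 2 (apply the CBC theorem).} Run the component-by-component construction of Theorem~\ref{thm_cbc} in dimension $d=\alpha s$ with the weights $\widetilde\gamma_\setv$ from \eqref{eq:weight2}. This produces a generating vector $\bsq^*\in\Gc_{b,m}^{\alpha s}$ satisfying, for every $\lambda\in(1/\alpha,1]$,
\begin{equation*}
 E_{\alpha s}(\bsq^*) \,\le\,
 \Bigl( \tfrac{2}{b^{m}-1}\sum_{\emptyset\ne\setv\subseteq\{1:\alpha s\}}
 \widetilde\gamma_{\setv}^{\lambda} \bigl(\tfrac{b-1}{b^{\alpha\lambda}-b}\bigr)^{|\setv|}\Bigr)^{1/\lambda}.
\end{equation*}
The interlaced polynomial lattice rule associated with $\bsq^*$ is our candidate.

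\emph{Step 3 (combinatorial identity, the main work).} It remains to rewrite the sum over $\setv\subseteq\{1:\alpha s\}$ in terms of a sum over $\setu\subseteq\{1:s\}$. Recall from \eqref{u_of_v} that $\setu(\setv)$ records which of the $s$ blocks $\{(i-1)\alpha+1,\ldots,i\alpha\}$ the set $\setv$ intersects, and $\widetilde\gamma_\setv$ in \eqref{eq:weight2} depends on $\setv$ only through $\setu(\setv)$. Grouping $\setv$ according to $\setu=\setu(\setv)$, the set $\setv$ is obtained by choosing a nonempty subset $T_i$ within each block indexed by $i\in\setu$, giving
\begin{equation*}
 \sum_{\setv:\,\setu(\setv)=\setu} x^{|\setv|}
 \,=\, \prod_{i\in\setu}\bigl((1+x)^\alpha - 1\bigr)
 \,=\, \bigl((1+x)^\alpha - 1\bigr)^{|\setu|}.
\end{equation*}
Applied with $x = (b-1)/(b^{\alpha\lambda}-b)$ and using $\widetilde\gamma_\setv^\lambda = \bigl(C_{\alpha,b}\,b^{\alpha(\alpha-1)/2}\bigr)^{\lambda|\setu|}\gamma_\setu^\lambda$, this produces exactly the factor $[\rho_{\alpha,b}(\lambda)]^{|\setu|}$ defined in \eqref{eq:defrhoab}. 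Substituting and combining with Steps 1 and 2 yields the claimed error bound.

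The main obstacle is the block-by-block combinatorial identity in Step~3; once that is in place, the theorem follows immediately from Theorems~\ref{thm:wce} and~\ref{thm_cbc}. All other manipulations are bookkeeping.
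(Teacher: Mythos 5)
Your proposal is correct and follows essentially the same route as the paper: apply Theorem~\ref{thm:wce} with $r'=1$, pass to the bound $E_{\alpha s}(\bsq)$ via \eqref{eq:error-1}--\eqref{eq:error-2}, invoke Theorem~\ref{thm_cbc} with the weights \eqref{eq:weight2} in dimension $d=\alpha s$, and regroup the sum over $\setv\subseteq\{1:\alpha s\}$ by blocks to obtain the factor $((1+x)^\alpha-1)^{|\setu|}$. The only difference is that you spell out the block-counting identity that the paper asserts silently in its final displayed equality.
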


\begin{proof}
Theorem~\ref{thm_cbc} states that an interlaced polynomial lattice rule
with interlacing factor $\alpha$ in $s$ dimensions can be constructed by
the CBC algorithm, with weights \eqref{eq:weight2}, such that
\begin{align*}
  &e_{\alpha,\bsgamma,s,1}(\calS)
  \,\le\, E_{\alpha s}(\bsq^*) \\
  &\,\le\, \Bigg( \frac{2}{b^{m}-1}
  \sum_{\emptyset\ne\setv\subseteq\{1:\alpha s\}}
  \left(C_{\alpha,b}^{|\setu(\setv)|}\, \gamma_{\setu(\setv)}\, b^{\alpha(\alpha-1)|\setu(\setv)|/2}\right)^\lambda
  \left(\frac{b-1}{b^{\alpha\lambda}-b} \right)^{|\setv|}
  \Bigg)^{1/\lambda} \\
  &\,=\, \Bigg( \frac{2}{b^{m}-1}
  \sum_{\emptyset\ne\setu\subseteq\{1:s\}}
  \left(C_{\alpha,b}^{|\setu|}\, \gamma_{\setu}\, b^{\alpha(\alpha-1)|\setu|/2}\right)^\lambda
  \left(\left(1+\frac{b-1}{b^{\alpha\lambda}-b} \right)^\alpha-1\right)^{|\setu|}
  \Bigg)^{1/\lambda}\;. 
\end{align*}
This yields the error bound in the theorem.
\end{proof}

In the following we discuss the two specific choices of weights
\eqref{eq_defSPOD} and \eqref{eq_defPROD}.

\textbf{SPOD weights.} Substituting in our choice of $\gamma_\setu$ from
\eqref{eq_defSPOD} and applying Jensen's inequality, we obtain
\begin{align} \label{eq:want}
  e_{\alpha,\bsgamma,s,1}(\calS)
  &\,\le\, \Bigg( \frac{2}{b^{m}-1}
  \sum_{\emptyset\ne\setu\subseteq\{1:s\}}
  \sum_{\bsnu_\setu \in \{1:\alpha\}^{|\setu|}}
  (|\bsnu_\setu|!)^\lambda \,\prod_{j\in\setu}
  \left(B\,2^{\delta(\nu_j,\alpha)} \beta_j^{\nu_j}\right)^\lambda
  \Bigg)^{1/\lambda} \nonumber\\
  &\,=\, \Bigg( \frac{2}{b^{m}-1}
  \sum_{\bszero\ne\bsnu\in\{0:\alpha\}^s}
  (|\bsnu|!)^\lambda \,\prod_{\satop{j=1}{\nu_j>0}}^s
  \left(B\,2^{\delta(\nu_j,\alpha)} \beta_j^{\nu_j}\right)^\lambda
  \Bigg)^{1/\lambda}\;,
\end{align}
where
\begin{align} \label{eq:B}
  B \,:=\, C_{\alpha,b}\, b^{\alpha(\alpha-1)/2}
  \left(\left(1+\frac{b-1}{b^{\alpha\lambda}-b} \right)^\alpha-1\right)^{1/\lambda}\;.
\end{align}
We now choose $\lambda$ to ensure that the sum in \eqref{eq:want} is
bounded independently of $s$. Let $\widetilde\beta_j :=
2\max(B,1)\beta_j$. Then the sum in \eqref{eq:want} is bounded by
\[
  \sum_{\bszero\ne\bsnu\in\{0:\alpha\}^s}
  \bigg(|\bsnu|! \,\prod_{j=1}^s
  \widetilde\beta_j^{\nu_j} \bigg)^\lambda\;,
\]
where each term in the sum to be raised to the power of $\lambda$ is of
the form
\begin{equation} \label{eq:form}
  (\nu_1 + \nu_2 + \cdots + \nu_s)!\,
  \underbrace{\widetilde\beta_1 \cdots \widetilde\beta_1}_{\nu_1}
  \underbrace{\widetilde\beta_2 \cdots \widetilde\beta_2}_{\nu_2}
  \cdots
  \underbrace{\widetilde\beta_s \cdots \widetilde\beta_s}_{\nu_s}\;.
\end{equation}
We now define a sequence $d_j := \widetilde\beta_{\lceil j/\alpha\rceil}$
so that $d_1 = \cdots = d_\alpha = \widetilde\beta_1$ and $d_{\alpha+1} =
\cdots = d_{2\alpha} = \widetilde\beta_2$, and so on. Then any term of the
form \eqref{eq:form} can be written as
$
  |\setv|!\, \prod_{j\in\setv} d_j
$
for some subset of indices $\setv\subset\bbN$. Thus we conclude that
\begin{align} \label{eq:last}
  \sum_{\bszero\ne\bsnu\in\{0:\alpha\}^s}
  \bigg(|\bsnu|! \,\prod_{\satop{j=1}{\nu_j>0}}^s
  \widetilde\beta_j^{\nu_j} \bigg)^\lambda
  &\,<\, \sum_{\satop{\setv\subset\bbN}{|\setv|<\infty}}
  \bigg(|\setv|!\, \prod_{j\in\setv} d_j\bigg)^\lambda \nonumber\\
  &\,=\, \sum_{\ell=0}^\infty (\ell!)^\lambda
  \sum_{\satop{\setv\subset\bbN}{|\setv|=\ell}} \prod_{j\in\setv} d_j^\lambda
  \,\le\, \sum_{\ell=0}^\infty (\ell!)^{\lambda-1}
  \bigg(\sum_{j=1}^\infty d_j^\lambda\bigg)^\ell\;.
\end{align}
Note that $\sum_{j=1}^\infty \beta_j^p < \infty$ holds if and only if
$\sum_{j=1}^\infty d_j^p < \infty$. By the ratio test, the last expression
in \eqref{eq:last} is finite if $p\le \lambda < 1$. Alternatively,
using the geometric series formula, the last expression in \eqref{eq:last}
is finite if $\lambda=1$ and $\sum_{j=1}^\infty d_j < 1$. Recall that
$\lambda$ also needs to satisfy $1/\alpha < \lambda\le 1$. Hence we take
\[
  \lambda \,=\, p \qquad\mbox{and}\qquad \alpha \,=\, \lfloor 1/p\rfloor +1\;,
\]
and for
$p=1$ we assume additionally that $\sum_{j=1}^\infty d_j
< 1$, which is equivalent to
\begin{equation} \label{eq:small}
  \sum_{j=1}^\infty \beta_j
  < \frac{1}{2\alpha\max(B,1)}
  = \bigg(4
  \max\bigg(\frac{2}{(2\sin\frac{\pi}{b})^2},\frac{1}{2\sin\frac{\pi}{b}}\bigg)
  \bigg(3+\frac{2}{b}+\frac{2b+1}{b-1}\bigg)
  \bigg(2+\frac{1}{b}\bigg)\bigg)^{-1}.
\end{equation}
Thus we obtain $\calO(N^{-1/p})$ convergence, with the implied constant independent
of~$s$.

\textbf{Product weights.} With the product weights given by \eqref{eq_defPROD}, we obtain
\begin{align*}
  e_{\alpha,\bsgamma,s,1}(\calS)
  &\,\le\, \Bigg( \frac{2}{b^{m}-1}
  \sum_{\emptyset\ne\setu\subseteq\{1:s\}}
  \prod_{j\in\setu}
  \bigg(B\,\sum_{\nu=1}^\alpha \nu!\, 2^{\delta(\nu,\alpha)} \beta_j^{\nu}\bigg)^\lambda
  \Bigg)^{1/\lambda} \\
  &\,\le\, \Bigg( \frac{2}{b^{m}-1}
  \exp
  \Bigg(\sum_{j=1}^s
  \bigg(B\,\sum_{\nu=1}^\alpha \nu!\, 2^{\delta(\nu,\alpha)} \beta_j^{\nu}\bigg)^\lambda \Bigg)
  \Bigg)^{1/\lambda}\;,
\end{align*}
which is bounded independently of $s$ if $\sum_{j=1}^\infty
\beta_j^\lambda < \infty$, where $B$ is as defined in \eqref{eq:B}.
Hence we take again
\[
  \lambda \,=\, p \qquad\mbox{and}\qquad \alpha \,=\, \lfloor 1/p\rfloor +1\;,
\]
to obtain the convergence rate of $\calO(N^{-1/p})$, with the implied
constant independent of~$s$. Note that the condition \eqref{eq:small} is
not needed for product weights when $p=1$.

\subsection{Component-by-component algorithm} \label{sec:CBC}

We first express $E_d(\bsq)$ in \eqref{eq:def-E} in a more convenient
form for computation. Recall from Definition~\ref{def_poly_lat} that the
$j$-th coordinate of the $n$-th point of the interlaced polynomial lattice
point set is
\begin{equation*}
  y_j^{(n)} \,=\, \upsilon_m\left(\frac{n(x)\, q_j(x)}{P(x)} \right)\;.
\end{equation*}
In the following we write $y_j^{(n)}$ for brevity; note however that
$y_j^{(n)}$ depends on the $j$-th component $q_j$ of the generating
vector.
We have
\begin{equation*}
 \sum_{\bsell_\setv \in \calD_\setv^*} b^{- \alpha \mu_1(\bsell_\setv)}
 \,=\, \frac{1}{b^m} \sum_{n=0}^{b^m-1} \sum_{\bsell \in \mathbb{N}^{|\setv|}} b^{-\alpha\mu_1(\bsl)}
 \wal_{\bsell}(\bsy_{\setv}^{(n)})
 \,=\, \frac{1}{b^m} \sum_{n=0}^{b^m-1} \prod_{j \in \setv} \omega(y_j^{(n)})\;,
\end{equation*}
where $\bsy_{\setv}^{(n)} = (y_j^{(n)})_{j \in \setv}$ is the projection
of $n$-th point $\bsy^{(n)}$ onto the coordinates in $\setv$, 
\begin{align*}
  \omega(y)
  \,=\, \sum_{\ell=1}^\infty b^{-\alpha \mu_1(\ell)} \wal_\ell(y)
  \,=\, \frac{b-1}{b^\alpha-b} - b^{\lfloor \log_b y \rfloor (\alpha-1)}
  \frac{b^\alpha-1}{b^\alpha - b}\;,
\end{align*}
and where for $y=0$ we set $b^{\lfloor \log_b 0\rfloor (\alpha-1)} :=
0$. The last equality can be obtained by multiplying
\cite[Eq.~(2)]{DiPi05} by $b^{-\alpha}$.
Hence we can write
\begin{equation} \label{eq:E-CBC}
  E_d(\bsq) \,=\,
  \frac{1}{b^m} \sum_{n=0}^{b^m-1} \sum_{\emptyset \neq \setv \subseteq \{1:d\}}
  \widetilde\gamma_\setv \prod_{j \in \setv} \omega(y_j^{(n)})\;.
\end{equation}

To facilitate the CBC construction, it is important that $E_d(\bsq)$
allows recursively separating out a term depending only on the highest
dimension. The strategy depends on the form of weights.

\textbf{SPOD weights.} Combining \eqref{eq:weight2} with
\eqref{eq_defSPOD}, we obtain SPOD weights
\[
  \widetilde\gamma_\setv =
 \sum_{\bsnu_{\setu(\setv)} \in \{1:\alpha\}^{|\setu(\setv)|}} |\bsnu_{\setu(\setv)}|!
 \prod_{j \in \setu(\setv)} \gamma_j(\nu_j)\,,
 \quad\mbox{with}\quad
 \gamma_j(\nu_j) :=
 C_{\alpha,b}\, b^{\alpha(\alpha-1)/2}\, 2^{\delta(\nu_j,\alpha)}\beta_j^{\nu_j}.
\]
Substituting this into \eqref{eq:E-CBC} yields
\begin{align*}
  E_d(\bsq)
  &\,=\,
  \frac{1}{b^m} \sum_{n=0}^{b^m-1} \sum_{\emptyset \neq \setv \subseteq \{1:d\}}
  \sum_{\bsnu_{\setu(\setv)} \in \{1:\alpha\}^{|\setu(\setv)|}} |\bsnu_{\setu(\setv)}|!
  \bigg(\prod_{j \in \setu(\setv)} \gamma_j(\nu_j)\bigg)
  \bigg(\prod_{j \in \setv} \omega(y_j^{(n)})\bigg)\;.
\end{align*}
Recall that every block of $\alpha$ components in the generating vector
$\bsq$ yields one component for the interlaced polynomial lattice rule.
For convenience, we replace the index $d$ by a double index $(s,t)$ such
that $s$ is the index for the block and $t$ is the index within the block,
that is, we have
\[
  s \,=\, \lceil d/\alpha\rceil
  \quad\mbox{and}\quad t \,=\, (d-1)\bmod \alpha + 1
  \quad\mbox{such that}\quad d \,=\, \alpha(s-1)+t\;.
\]
We then reorder the sums in $E_d(\bsq)$ according to $\bsnu =
(\nu_1,\ldots, \nu_s) \in \{0:\alpha\}^s$ and $\setv \subseteq \{1:d\}$ so
that the set $\setu(\setv)$ consists of the indices $j$ for which
$\nu_j > 0$.
This yields
\begin{align} \label{eq:Est}
  &E_{s,t}(\bsq)
  \,=\,
  \frac{1}{b^m} \sum_{n=0}^{b^m-1}
  \sum_{\satop{\bsnu \in \{0:\alpha\}^s}{|\bsnu|\ne 0}}
  \sum_{\satop{\setv \subseteq \{1:d\}\text{ s.t.}}{\setu(\setv) = \{1\le j\le s\,:\,\nu_j>0\}}} |\bsnu|!
  \bigg(\prod_{j \in \setu(\setv)} \gamma_j(\nu_j)\bigg)
  \bigg(\prod_{j \in \setv} \omega(y_j^{(n)})\bigg) \nonumber\\
  &\,=\,
  \frac{1}{b^m} \sum_{n=0}^{b^m-1} \sum_{\ell=1}^{\alpha s} \ell!
  \sum_{\satop{\bsnu \in \{0:\alpha\}^s}{|\bsnu|=\ell}}
  \bigg(\prod_{\satop{j=1}{\nu_j>0}}^s \gamma_j(\nu_j)\bigg)
  \sum_{\satop{\setv \subseteq \{1:d\}\text{ s.t.}}{\setu(\setv) = \{1\le j\le s\,:\,\nu_j>0\}}}
  \prod_{j \in \setv} \omega(y_j^{(n)})\;.
\end{align}

When $t = \alpha$, that is, when the final block is complete, we have
\begin{align} \label{eq:U}
 &E_{s,\alpha}(\bsq)
 \,=\,
 \frac{1}{b^m} \sum_{n=0}^{b^m-1}
 \sum_{\ell=1}^{\alpha s}
 \underbrace{
 \ell!
 \sum_{\satop{\bsnu \in \{0:\alpha\}^s }{|\bsnu|=\ell}}
 \prod_{\satop{j=1}{\nu_j>0}}^s \bigg[ \gamma_j(\nu_j)
 \bigg(\prod_{i = 1}^{\alpha} (1+\omega(y_{j,i}^{(n)})) -1 \bigg) \bigg]
 }_{=:\, U_{s,\ell}(n)}
 \;,
\end{align}
where we defined the quantity $U_{s,\ell}(n)$, with $U_{0,\ell}(n):=1$,
$U_{s,0}(n):=0$, and $U_{s,\ell}(n) :=0$ for $\ell>\alpha s$.
When $t < \alpha$, that is, when the final block is incomplete, by
separating out the case $\nu_s = 0$ in \eqref{eq:Est}, we obtain
\begin{align*}
 &E_{s,t}(\bsq)
 \,=\,
 \frac{1}{b^m} \sum_{n=0}^{b^m-1}
 \sum_{\ell=1}^{\alpha (s-1)}
 \ell!
 \sum_{\satop{\bsnu \in \{0:\alpha\}^{s-1} }{|\bsnu|=\ell}}
 \prod_{\satop{j=1}{\nu_j>0}}^{s-1} \bigg[ \gamma_j(\nu_j)
 \bigg(\prod_{i = 1}^{\alpha} (1+\omega(y_{j,i}^{(n)})) -1 \bigg) \bigg]
 \nonumber\\
 &\qquad\qquad\quad +
 \frac{1}{b^m} \sum_{n=0}^{b^m-1}
 \sum_{\ell=1}^{\alpha s}
 \sum_{\nu_s=1}^{\min(\alpha,\ell)}
 \ell!
 \sum_{\satop{\bsnu \in \{0:\alpha\}^{s-1} }{|\bsnu|=\ell-\nu_s}} \Bigg(
 \prod_{\satop{j=1}{\nu_j>0}}^{s-1} \bigg[ \gamma_j(\nu_j)
 \bigg(\prod_{i = 1}^\alpha (1+\omega(y_{j,i}^{(n)})) -1\bigg) \bigg] \nonumber\\
 &\qquad\qquad\qquad\qquad\qquad\qquad\qquad\qquad\qquad\qquad\times
 \gamma_s(\nu_s)
 \bigg(\prod_{i = 1}^t (1+\omega(y_{s,i}^{(n)})) -1\bigg) \Bigg)\;, \nonumber
\end{align*}
and thus
\begin{align}  \label{eq:VWX}
 &E_{s,t}(\bsq)
 \,=\, E_{s-1,\alpha}(\bsq) \\
 &\qquad + \frac{1}{b^m} \sum_{n=0}^{b^m-1}
 \bigg(\underbrace{\prod_{i = 1}^t (1+\omega(y_{s,i}^{(n)}))}_{=:\,V_{s,t}(n)}-1 \bigg)
 \bigg(
 \underbrace{
 \sum_{\ell=1}^{\alpha s}
 \underbrace{
 \sum_{\nu_s=1}^{\min(\alpha,\ell)} \gamma_s(\nu_s)\frac{\ell!}{(\ell-\nu_s)!}\, U_{s-1,\ell-\nu_s}(n)
 }_{=:\, X_{s,\ell}(n)}
 }_{=:\, W_s(n)}
 \bigg)\;, \nonumber
\end{align}
where we defined $V_{s,t}(n)$, $W_{s}(n)$, and $X_{s,\ell}(n)$ as indicated, with
$V_{s,0}(n):=1$.

Note that the polynomial $q_{s,t}$ only appears in the final factor of the
products $V_{s,t}(n)$. In particular, the part of \eqref{eq:VWX} that is
affected by $q_{s,t}$ is
\[
 \sum_{n=1}^{b^m-1}
 \omega(y_{s,t}^{(n)})\, V_{s,t-1}(n)\, W_s(n)\;.
\]
Computing this quantity for every $q_{s,t}\in \Gc_{b,m}$ requires the
matrix-vector multiplication with the matrix
\begin{equation*}
  \bsOmega \,:=\,
  \left[\omega\left(\upsilon_m\left(\frac{n(x) q(x)}{P(x)} \right)\right)
  \right]_{\satop{1\le n\le b^m-1}{q\in \Gc_{b,m}}}
\end{equation*}
and the vector $[V_{s,t-1}(n)\,W_s(n)]_{1\le n\le b^m-1}$. The rows and
columns of this matrix can be permuted to allow the matrix-vector
multiplication to be carried out using the fast Fourier transform, see
\cite{NC06a}, with a cost of $\calO(M\,\log M) = \calO(N\,\log N)$ operations, where $M=b^m-1$ and $N =
b^m$. The strategy is based on the Rader transform, see also \cite[Chapter~10.3]{DiPi10}.

Once $q_{s,t}$ is chosen for dimension $\alpha (s-1) + t$, we update the products $V_{s,t}(n)$ by 
\[
  V_{s,t}(n) \,=\, (1+\omega(y_{s,t}^{(n)}))\, V_{s,t-1}(n)\;.
\]
This requires $\calO(N)$ operations. Once we have completed an entire
block of $\alpha$ dimensions, we need to update the values $U_{s,\ell}(n)$
using
\begin{align*}
 U_{s,\ell}(n)
 &\,=\,
 \ell!
 \sum_{\satop{\bsnu \in \{0:\alpha\}^{s-1} }{|\bsnu|=\ell}}
 \prod_{\satop{j=1}{\nu_j>0}}^{s-1} \bigg[ \gamma_j(\nu_j)
 \bigg(\prod_{i = 1}^{\alpha} (1+\omega(y_{j,i}^{(n)})) -1\bigg) \bigg] \\
 &\qquad
 +
 \ell! 
 \sum_{\nu_s=1}^{\min(\alpha,\ell)} \!\!
 \sum_{\satop{\bsnu \in \{0:\alpha\}^{s-1} }{|\bsnu|=\ell-\nu_s}} \!\!\!
 \Bigg(
 \prod_{\satop{j=1}{\nu_j>0}}^{s-1} \bigg[ \gamma_j(\nu_j)
 \bigg(\prod_{i = 1}^{\alpha} (1+\omega(y_{j,i}^{(n)})) -1\bigg) \bigg] \\
 &\qquad\qquad\qquad\qquad\qquad\qquad\times
 \gamma_s(\nu_s)
 \bigg(\prod_{i = 1}^{\alpha} (1+\omega(y_{s,i}^{(n)}))-1 \bigg) \Bigg)\\
 &\,=\, U_{s-1,\ell}(n) + (V_{s,\alpha}(n) -1)\,X_{s,\ell}(n)\;.
\end{align*}
Given that the quantities $V_{s,\alpha}(n)$ and $X_{s,\ell}(n)$ have been
pre-computed and stored, this update requires $\calO(\alpha\, s N)$
operations. We then need to initialize the products $V_{s+1,0}(n)$ by $1$
with $\calO(N)$ operations, and compute the quantities $W_{s+1}(n)$ and
$X_{s+1,\ell}(n)$ with $\calO(\alpha^2 s N)$ operations, before continuing
the search in the new block.

The 
total computational cost for the CBC construction up to $\alpha s$
dimensions is
\[
  \calO(\alpha\, s\, N\,\log N) \mbox{ search cost, plus }
  \calO(\alpha^2 s^2 N) \mbox{ update cost}.
\]
We need to store the quantities $U_{s,\ell}(n)$, $V_{s,t}(n)$, $W_{s}(n)$,
and $X_{s,\ell}(n)$, which can be overwritten as we increase $s$ and $t$. Hence, the total memory
requirement is $\calO(\alpha\, s\, N)$.

We summarize the algorithm in Pseudocode~1 below where $.*$ means element wise multiplication. Note that $\bsU(\ell)$
for $\ell=0,\ldots, \alpha s_{\max}$, $\bsV$, $\bsW$, $\bsX(\ell)$ for
$\ell=1,\ldots, \alpha s_{\max}$, and $\bsE$ are all vectors of length
$N-1$, while $\bsOmega^{\rm perm}$ denotes the permuted version of the
matrix $\bsOmega$.

\textbf{Product weights.} Combining \eqref{eq:weight2} with
\eqref{eq_defPROD}, we obtain product weights
\[
 \widetilde\gamma_\setv \,=\,
 \prod_{j \in \setu(\setv)} \gamma_j\;,
 \quad\mbox{with}\quad
 \gamma_j \,:=\,
 C_{\alpha,b}\, b^{\alpha(\alpha-1)/2}\, \sum_{\nu=1}^\alpha \nu!\, 2^{\delta(\nu,\alpha)}\beta_j^{\nu}\;.
\]
Substituting this into \eqref{eq:E-CBC} yields
\begin{align*}
  E_d(\bsq)
  &\,=\,
  \frac{1}{b^m} \sum_{n=0}^{b^m-1} \sum_{\emptyset \neq \setv \subseteq \{1:d\}}
  \bigg(\prod_{j \in \setu(\setv)} \gamma_j\bigg)
  \bigg(\prod_{j \in \setv} \omega(y_j^{(n)})\bigg) \\
  &\,=\,
  \frac{1}{b^m} \sum_{n=0}^{b^m-1} \sum_{\emptyset \neq \setu \subseteq \{1:s\}}
  \bigg(\prod_{j \in \setu} \gamma_j\bigg)
   \sum_{\satop{\setv \subseteq \{1:d\}}{\setu(\setv)=\setu}}
  \bigg(\prod_{j \in \setv} \omega(y_j^{(n)})\bigg)\;.
\end{align*}
Replacing $d$ by the double index $(s,t)$ as before, we obtain for
$t=\alpha$ that
\begin{align*}
  E_{s,\alpha}(\bsq)
  &\,=\,
  \frac{1}{b^m} \sum_{n=0}^{b^m-1} \underbrace{\prod_{j=1}^s
  \bigg[1 + \gamma_j \bigg(\prod_{i=1}^\alpha (1+ \omega(y_{j,i}^{(n)}) )- 1\bigg)\bigg] }_{=:\,Y_s(n)} - 1\;,
\end{align*}
where we defined the quantity $Y_s(n)$, with $Y_0(n) :=1$. For $t<\alpha$
we have
\begin{align*}
  E_{s,t}(\bsq)
  &\,=\,
  \frac{1}{b^m} \sum_{n=0}^{b^m-1}
  \bigg[1 + \gamma_s \bigg(\underbrace{\prod_{i=1}^t (1+ \omega(y_{s,i}^{(n)}))}_{=:\, V_{s,t}(n)}-1\bigg)\bigg]
  Y_{s-1}(n)
  - 1\;,
\end{align*}
where $V_{s,t}(n)$ is as defined before. The part of
$E_{s,t}(\bsq)$ that is affected by $q_{s,t}$ is
\[
 \sum_{n=1}^{b^m-1}
 \omega(y_{s,t}^{(n)})\, V_{s,t-1}(n)\, Y_{s-1}(n)\;.
\]
Computing this quantity for every $q_{s,t}\in \Gc_{b,m}$ can be done using
the fast Fourier transform as before, with a cost of $\calO(N\,\log N)$
operations. Once $q_{s,t}$ is chosen for each dimension, we need to update
the products $V_{s,t}(n)$ with $\calO(N)$ operations. Once we have
completed an entire block of $\alpha$ dimensions, we need to update the
products $Y_{s}(n)$, again with $\calO(N)$ operations. Hence the total
computational cost is only $\calO(\alpha\, s\, N\,\log N)$ operations,
with the memory requirement of $\calO(N)$. The algorithm for product
weights is summarized in Pseudocode~2 below.

\begin{algorithm}[t] 
\caption{(Fast CBC implementation for SPOD weights)}
\small
\begin{algorithmic}
  \State $\bsU(0) := \bsone$
  \State $\bsU(1:\alpha\, s_{\max}) := \bszero$
  \For{\bf $s$ from $1$ to $s_{\max}$}
  \State $\bsV := \bsone$
  \Comment{initialize products and sums}
  \State $\bsW := \bszero$
  \For{\bf $\ell$ from $1$ to $\alpha s$}
  \State $\bsX(\ell) := \bszero$
  \For{\bf $\nu$ from $1$ to $\min(\alpha,\ell)$}
  \State $\bsX(\ell) := \bsX(\ell) + \gamma_s(\nu) \displaystyle\frac{\ell!}{(\ell-\nu)!}\, \bsU(\ell - \nu)$
  \EndFor
  \State $\bsW := \bsW + \bsX(\ell)$
  \EndFor
  \For{\bf $t$ from $1$ to $\alpha$}
  \State
  $\bsE := \bsOmega^{{\rm perm}}\,(\bsV .\!* \bsW)$
  \Comment{compute -- { use FFT}}
  \State
  $q_{s,t} := {\rm argmin}_{q\in \Gc_{b,m}} E(q)$
  \Comment{select -- { pick the correct index}}
  \State
  $\bsV := \big(\bsone + \bsOmega^{{\rm perm}}(q_{s,t},:)\big) \,.\!*\,\bsV$
  \Comment{update products}
  \EndFor
  \For{\bf $\ell$ from $1$ to $\alpha s$}
  \Comment{update sums}
  \State $\bsU(\ell) := \bsU(\ell) + (\bsV - \bsone) \,.\!*\, \bsX(\ell)$
  \EndFor
  \EndFor
\end{algorithmic}
\end{algorithm}

\begin{algorithm}[t] 
\caption{(Fast CBC implementation for product weights)}
\small
\begin{algorithmic}
  \State $\bsY := \bsone$
  \For{\bf $s$ from $1$ to $s_{\max}$}
  \State $\bsV := \bsone$
  \For{\bf $t$ from $1$ to $\alpha$}
  \State
  $\bsE := \bsOmega^{{\rm perm}}\,(\bsV .\!* \bsY)$
  \Comment{compute -- { use FFT}}
  \State
  $q_{s,t} := {\rm argmin}_{q\in \Gc_{b,m}} E(q)$
  \Comment{select -- { pick the correct index}}
  \State
  $\bsV := \big(\bsone + \bsOmega^{{\rm perm}}(q_{s,t},:)\big) \,.\!*\,\bsV$
  \Comment{update products}
  \EndFor
  \State $\bsY := (\bsone + \gamma_s (\bsV - \bsone)) \,.\!*\, \bsY$
  \Comment{update products}
  \EndFor
\end{algorithmic}
\end{algorithm}

\section{Combined QMC Petrov-Galerkin Error Bound}
\label{sec:comb-err}

At last we return to the PDE problem where the goal is to approximate the
integral \eqref{eq:int} by a QMC Petrov-Galerkin method
\eqref{eq:qmcG}. With a slight abuse of notation, here we denote
\eqref{eq:qmcG} by $Q_{N,s}(G(u^h_s))$, suppressing from our notation the
translation from
$[0,1]^s$ to $[-\frac{1}{2},\frac{1}{2}]^s$. We write the overall error as
\begin{align} \label{total err}
 &I(G(u)) - Q_{N,s}(G(u^h_s))
\\
 &\,=\, [I(G(u)) - I(G(u_s))] + [I(G(u_s)) - I(G(u_s^h))] + [I(G(u_s^h)) - Q_{N,s}(G(u_s^h))]
\;. \nonumber
\end{align}
The first term in \eqref{total err} is the \emph{dimension truncation
error} which was analyzed in Theorem~\ref{thm:trunc}. The second term in
\eqref{total err} is the \emph{Petrov-Galerkin discretization error}
which can be deduced from Theorem~\ref{thm:FEGconv} by taking the vectors
$\bsy$ with $y_j=0$ for $j>s$. The third term in \eqref{total err} is the
\emph{QMC quadrature error} which can be estimated from
Theorem~\ref{thm:main} by noting that for the integrand $F(\bsy) =
G(u_s^h(\bsy))$ we have
$ 
  |(\partial^\bsnu_\bsy F)(\bsy)|
  \,\le\, \|G(\cdot)\|_{\cX'}\, \|(\partial^\bsnu_\bsy u^h_s)(\bsy)\|_{\cX}\;,
$ 
while recognizing that Theorem~\ref{thm:Dsibound} applies also to the
truncated Petrov-Galerkin solution $u^h_s$. We summarize the combined
error estimate in the following theorem.

\begin{theorem}
Under Assumption~\ref{ass:AssBj} and conditions \eqref{eq:psumpsi0},
\eqref{eq:assW1infty}, \eqref{eq:regG}, \eqref{eq:ordered}, if we
approximate the integral \eqref{eq:int} by \eqref{eq:qmcG} using an
interlaced polynomial lattice rule of order $\alpha = \lfloor 1/p \rfloor
+ 1$ with $N=b^m$ points (with $b$ prime) in $s$ dimensions, combined with
a Petrov-Galerkin method in the domain $D$ with one common subspace
$\cX^h$ with $M_h=\operatorname{dim}(\cX^h)$ degrees of freedom and with
the approximation property \eqref{eq:apprprop} with linear cost
$\calO(M_h)$, then there holds the error bound
\[
 |I(G(u)) - Q_{N,s}(G(u^h_s)) | \,\le\, C
 \left( \kappa(s,N)\,\| f \|_{\cY'}\, \| G(\cdot)\|_{\cX'}
  + h^{\tau}\, \| f \|_{{\cY'_{t}}} \| G(\cdot)\|_{{\cX'_{t'}}}  \right)
  \;,
\]
where $\tau = t+ t'$, $C>0$ is independent of $s$, $h$ and $N$, and
\[
  \kappa(s,N) \,=\,
  \begin{cases}
  s^{-2(1/p-1)} + N^{-1/p} & \mbox{if } p \in (0,1)\;, \\
  ( \sum_{j =s+1}^\infty \beta_j )^2 + N^{-1} & \mbox{if } p = 1\;.
  \end{cases}
\]
\end{theorem}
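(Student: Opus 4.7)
The plan is to apply the triangle inequality to the three-term splitting already displayed in \eqref{total err} and to invoke one earlier result for each piece: Theorem~\ref{thm:trunc} for the dimension truncation error, Theorem~\ref{thm:FEGconv} for the Petrov--Galerkin discretization error, and Theorem~\ref{thm:main} for the QMC quadrature error. The key requirement is that each of the three bounds carries constants independent of the parameters that do not appear in its own rate; once this is verified, the three estimates add and the $p$-, $\alpha$-, $\bsbeta$-, $b$-dependent constants absorb into a single $C$.

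For the first piece $I(G(u))-I(G(u_s))$ I would invoke \eqref{eq:Idimtrunc}, which gives a bound proportional to $\|f\|_{\cY'}\|G\|_{\cX'}\,(\sum_{j\ge s+1}\beta_j)^2$. When $p\in(0,1)$ the last estimate of Theorem~\ref{thm:trunc} bounds the tail by $C_p\,s^{-(1/p-1)}$, so squaring produces the $s^{-2(1/p-1)}$ contribution to $\kappa(s,N)$; for $p=1$ we retain $(\sum_{j\ge s+1}\beta_j)^2$ directly, which is the second branch of $\kappa$. For the second piece $I(G(u_s))-I(G(u_s^h))$ I would apply Theorem~\ref{thm:FEGconv} pointwise in $\bsy$ to the $s$-term truncated operator family, which inherits Assumption~\ref{ass:AssBj} with the same constants $\mu_0$ and $\kappa$, so that $\bar{\mu}=(1-\kappa/2)\bar{\mu}_0$ is preserved under truncation. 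The resulting pointwise bound $|G(u_s(\bsy))-G(u_s^h(\bsy))|\le C\,h^\tau\|f\|_{\cY'_t}\|G\|_{\cX'_{t'}}$ is uniform in $\bsy\in U$ and in $s$, and integrates against the unit-measure domain $U$ without loss.

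The third and most delicate piece is the quadrature error $I(G(u_s^h))-Q_{N,s}(G(u_s^h))$. Setting $F(\bsy):=G(u_s^h(\bsy-\bhalf))$ and using linearity of $G$, one has $|\partial^\bsnu_\bsy F(\bsy)|\le\|G\|_{\cX'}\,\|\partial^\bsnu_\bsy u_s^h(\bsy-\bhalf)\|_\cX$, so that the hypothesis \eqref{eq:like-norm} of Theorem~\ref{thm:main} reduces to checking that \eqref{eq:Dsibound} holds for the truncated Petrov--Galerkin solution $u_s^h$ with the \emph{same} sequence $\bsbeta$ and the same constant $C_0$, uniformly in $s$ and $h$. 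This is the main obstacle. The recursion behind \eqref{eq:Dsibound} differentiates the parametric weak formulation to obtain $\fa(\bsy;\partial^\bsnu u,w)=-\sum_{j:\nu_j>0}\nu_j\,\fa_j(\partial^{\bsnu-\bse_j}u,w)$, and then closes the induction by applying the inf--sup bound on $A(\bsy)$. On the Galerkin trial space the same identity holds after restricting $w$ to $\cY^h$, and the uniform discrete inf--sup conditions \eqref{eq:Bhinfsup1}--\eqref{eq:Bhinfsup2} supply the analogous quantitative bound with the same $\bsbeta$; consequently the identical induction produces \eqref{eq:Dsibound} for $u_s^h$ with $\beta_j$ unchanged.

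Once this verification is in hand, Theorem~\ref{thm:main} delivers a quadrature error $C_{\alpha,\bsbeta,b,p}\,N^{-1/p}\,\|f\|_{\cY'}\|G\|_{\cX'}$, achieved by a single CBC construction of an interlaced polynomial lattice rule of order $\alpha=\lfloor 1/p\rfloor+1$ with the SPOD weights \eqref{eq_defSPOD}, and with constant independent of $s$ and $N$. Summing the three estimates and absorbing all $p$-, $\alpha$-, $\bsbeta$-, $b$-dependent factors into a single $C$ gives the stated error bound, with $\kappa(s,N)$ assembled from the truncation and QMC contributions exactly as claimed in the two cases $p\in(0,1)$ and $p=1$.
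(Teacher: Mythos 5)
Your proposal is correct and follows essentially the same route as the paper: the three-term splitting \eqref{total err}, with Theorem~\ref{thm:trunc} for the truncation error, Theorem~\ref{thm:FEGconv} (restricted to $y_j=0$ for $j>s$) for the discretization error, and Theorem~\ref{thm:main} for the quadrature error via the derivative bounds of Theorem~\ref{thm:Dsibound} applied to $u^h_s$. Your sketch of why the parametric regularity estimate transfers to the truncated Petrov--Galerkin solution through the discrete inf--sup conditions is in fact more explicit than the paper, which simply asserts this transfer.
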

The cost for the evaluation of $Q_{N,s}(G(u^h_s))$ is $\calO(s NM_h)$
operations. The cost for the CBC construction of the interlaced polynomial
lattice rule is $\calO(\alpha\, s\, N\log N + \alpha^2 s^2 N)$ operations
with SPOD weights, plus $\calO(\alpha\, s\, N)$ memory requirement.

\section*{Acknowledgments}

The authors would like to thank Ian Sloan for helpful comments on the paper.



\begin{thebibliography}{99.}%

\bibitem
  {BDGP11} J. Baldeaux, J. Dick, J. Greslehner, and F. Pillichshammer,
  Construction algorithms for higher order polynomial lattice rules,
  J. Complexity, 27 (2011), pp. 281--299.

\bibitem
  {BDLNP12}
  J. Baldeaux, J. Dick, G. Leobacher, D. Nuyens, and F. Pillichshammer,
  Efficient calculation of the worst-case error and (fast) component-by-component
  construction of higher order polynomial lattice rules,
  Numer. Algorithms, 59 (2012), pp. 403--431.

\bibitem
  {BF}
  F. Brezzi and M. Fortin, Mixed and Hybrid Finite Element Methods,
  Springer Verlag, Berlin, 1991.

\bibitem{CCS2}
  A. Cohen, A. Chkifa, and Ch. Schwab,
  Breaking the curse of dimensionality in sparse polynomial approximation of parametric PDEs,
  Report, Seminar for Applied Mathematics, ETH Z\"urich 2013 (in review).

\bibitem
  {CDS1}
  A. Cohen, R. DeVore, and Ch. Schwab,
  Convergence rates of best $N$-term Galerkin approximation for a class of elliptic sPDEs,
  Found. Comput. Math., 10 (2010), pp. 615--646.
  


\bibitem
  {D07}
  J. Dick,
  Explicit constructions of Quasi-Monte Carlo rules for the numerical integration of high-dimensional periodic functions,
  SIAM J. Numer. Anal., 45 (2007),  pp. 2141--2176.

\bibitem
  {D08}
  J. Dick,
  Walsh spaces containing smooth functions and Quasi-Monte Carlo rules of arbitrary high order, SIAM J. Numer. Anal., 46 (2008), pp. 1519--1553.

\bibitem
  {D09}
  J. Dick,
  The decay of the Walsh coefficients of smooth functions,
  Bull. Aust. Math. Soc., 80 (2009), pp. 430--453.


\bibitem
  {DiPi05}
  J. Dick and F. Pillichshammer,
  Multivariate integration in weighted Hilbert spaces based on Walsh functions and weighted Sobolev spaces, J. Complexity, 21 (2005), pp. 149--195.

  \bibitem{DiPi07}
  J. Dick and F. Pillichshammer,
  Strong tractability of multivariate integration of arbitrary high order using digitally shifted polynomial lattice rules, J. Complexity, 23 (2007), pp. 436--453.

\bibitem
  {DiPi10}
  J.~Dick and F.~Pillichshammer,
  {\em Digital Nets and Sequences. Discrepancy Theory and Quasi-Monte Carlo Integration}, Cambridge, Cambridge University Press, 2010.


\bibitem{Go13}
  T. Goda,
  Good interlaced polynomial lattice rules for numerical integration in weighted Walsh spaces,
  ArXiv Preprint arXiv:1306.4573v1 (in review).

\bibitem
  {DiGo12}
  T. Goda and J. Dick,
  Construction of interlaced scrambled polynomial lattice rules of arbitrary high order,
  ArXiv Preprint arXiv:1301.6441v2 (in review).


\bibitem
  {HaSc11}
  M. Hansen and Ch. Schwab,
  Analytic regularity and best $N$-term
   approximation of high dimensional parametric initial value problems,
  Vietnam Journal of Mathematics, 41 (2013), pp. 181--215.


\bibitem
  {HoSc12Multi}
  V.~H.~Hoang and Ch. Schwab,
  Analytic regularity and polynomial approximation of stochastic, parametric elliptic multiscale PDEs, Analysis and Applications (Singapore), 11 (2011),  1350001, pp. 50. 

\bibitem{HoaSc12Wave}
    V.~H.~Hoang and Ch.~Schwab,
    Regularity and Generalized Polynomial Chaos Approximation of Parametric
    and Random Second-Order Hyperbolic Partial Differential Equations,
    Analysis and Applications (Singapore), 10 (2012), pp. 295--326.

\bibitem
  {KunothCS2011}
  A. Kunoth and Ch. Schwab,
  Analytic Regularity and GPC Approximation for Stochastic Control Problems
  Constrained by Linear Parametric Elliptic and Parabolic PDEs,
  SIAM J. Control and Optimization, 51 (2013), pp. 2442 -- 2471.


\bibitem
  {KSS12}
  F.~Y.~Kuo, Ch.~Schwab, and I.~H.~Sloan, Quasi-Monte Carlo finite element methods
  for a class of elliptic partial   differential equations with random coefficient,
  SIAM J. Numer. Anal., 50 (2012), pp. 3351-3374.

\bibitem
  {KSS11}
  F.~Y.~Kuo, Ch.~Schwab, and I.~H.~Sloan,
  Quasi-Monte Carlo methods for very high dimensional
  integration: the standard weighted-space setting and beyond,
  ANZIAM Journal, 53 (2011), pp. 1--37.

\bibitem
  {KSS13}
  F.~Y.~Kuo, Ch.~Schwab, and I.~H.~Sloan,
  Multi-Level Quasi-Monte Carlo finite element methods for a class of elliptic partial
  differential equations with random coefficient, (in review).


\bibitem
  {Nie92}
  H.~Niederreiter,
  {\em Random Number Generation and Quasi-Monte Carlo Methods},
  SIAM, Philadelphia, 1992.


  \bibitem{NS12}
   V.~Nistor and C.~Schwab,
   High order Galerkin approximations for parametric second order
   elliptic partial differential equations,
   Math. Mod. Meth. Appl. Sci., 23 (2013), 1729--1760.

\bibitem{NitSch74}
   J.A. Nitsche and A.H. Schatz,
   Interior Estimates for {R}itz-{G}alerkin Methods,
   Math.\ Comp., 28 (1974), pp.~937--958.


\bibitem{NC06a}
  D.~Nuyens and R.~Cools,
  Fast algorithms for component-by-component construction of
  rank-$1$ lattice rules in shift-invariant reproducing kernel
  Hilbert spaces, Math.\ Comp., 75 (2006), pp.~903--920.

\bibitem{SS12}
    Cl.~Schillings and Ch.~Schwab,
Sparse, {A}daptive {S}molyak {A}lgorithms for {B}ayesian {I}nverse {P}roblems,
{\em Inverse Problems}, (2013), 065011.

\bibitem{SS13}
    Cl.~Schillings and Ch.~Schwab,
    Sparsity in Bayesian Inversion of Parametric Operator Equations,
    Report 2013-17, Seminar for Applied Mathematics, ETH Z{\"u}rich
    (in review).

\bibitem
  {ScMCQMC12}
  Ch.~Schwab, QMC Galerkin discretizations of parametric operator equations,
  Proc. MCQMC 2012, Sydney (to appear).

\bibitem
  {SchwabGittelsonActNum11}
  Ch.~Schwab and C.J. Gittelson,
  Sparse tensor discretizations of high-dimensional parametric and stochastic PDEs,
  Acta Numerica, 20 (2011), pp. 291--467.

\bibitem
  {SW98}
  I.~H.~Sloan and H.~Wo\'zniakowski, When are Quasi-Monte Carlo algorithms efficient for
  high-dimensional integrals?, J.~Complexity, 14 (1998), pp.~1--33.


\end{thebibliography}
\end{document}